\pgfplotsset{compat=1.15}
\tikzset{baseline={($ (current bounding box.west) - (0,1ex) $)}, auto}
\tikzset{vertex/.style={circle, inner sep=1.5pt, fill}, edge/.style={thick, line join=bevel}}
\definecolor{gray}{rgb}{0.4,0.4,0.4}
\definecolor{black}{rgb}{0,0,0}
\definecolor{red}{rgb}{0.8,0,0}
\definecolor{blue}{rgb}{0,0,1}
\definecolor{pur}{rgb}{.75, 0.2, .85}
\definecolor{gold}{rgb}{0.99,0.8,0.01}
\newcommand{\mygraph}[1]{\tikz[baseline={($ (current bounding box.west) - (0,1ex) $)}]{\pgftransformscale{0.4} \graph [empty nodes, nodes={circle, inner sep=1.2pt, fill=black}, edges={thick, line to}, no placement, left anchor=, right anchor=] {#1};}}
\definecolor{purple}{RGB}{102,0,222} 
\definecolor{jade}{rgb}{0,0.66,0.42}
\DeclarePairedDelimiter\abs{\lvert}{\rvert}%
\newtheorem{thm}{Theorem}[section]
\newtheorem{lem}[thm]{Lemma}
\newtheorem{cor}[thm]{Corollary}
\newtheorem{prop}[thm]{Proposition}
\theoremstyle{definition}
\newtheorem{defn}[thm]{Definition}
\newtheorem{example}[thm]{Example}
\newtheorem{rem}[thm]{Remark}
\title{Iterated Jump Graphs}
\author{Fran Herr and Legrand Jones II}
\def\@maketitle{%
  \newpage
  \null
  \vskip 2em%
  \begin{center}%
  \let \footnote \thanks
    {\LARGE \@title \par}%
    \vskip 1.5em%
    {\large
      \lineskip .5em%
      \begin{tabular}[t]{c}%
        \@author
      \end{tabular}\par}%
    \vskip 1em%
  \end{center}%
  \par
  \vskip 1.5em}
\begin{document}

\maketitle

\begin{abstract}
    The jump graph $J(G)$ of a simple graph $G$ has vertices which represent edges in $G$ where two vertices in $J(G)$ are adjacent if and only if the corresponding edges in $G$ do not share an endpoint. In this paper, we examine sequences of graphs generated by iterating the jump graph operation and characterize the behavior of this sequence for all initial graphs. We build on work by Chartrand et al. who showed that a handful of jump graph sequences terminate and two sequences converge. We extend these results by showing that there are no non-trivial repeating sequences of jump graphs. All diverging jump graph sequences grow without bound while accumulating certain subgraphs.
\end{abstract}
        
\section{Introduction}\label{Section 1}

Over the course of this paper, we will be studying sequences of graphs generated by the jump graph operation. Given a simple graph $G$, its jump graph $J(G)$ has vertices that represent edges of $G$; two vertices in $J(G)$ are connected by an edge if and only if the corresponding edges of $G$ are not incident. For readers familiar with graph theory, $J(G)$ is the complement of the line graph $L(G)$. In this paper, we completely characterize the end behavior of any graph under iteration of the jump graph operation.

\begin{figure}[H]
  \centering
  \begin{minipage}[b]{0.3\textwidth}\centering    \begin{tikzpicture}[line cap=round,line join=round,>=triangle 45,x=0.6cm,y=0.6cm]
    \draw [line width=1pt] (-1,2)-- (-1,0);
    \draw [line width=1pt] (-1,0)-- (1,0);
    \draw [line width=1pt] (2,2)-- (2,0);
    \draw [line width=1pt] (-1,2)-- (1,2);
    \draw [line width=1pt] (1,2)-- (1,0);
    \begin{scriptsize}
    \draw [fill=black] (-1,2) circle (1.8pt);
    \draw (-1.3,1) node {2};
    \draw (0,-0.3) node {3};
    \draw (1.3,1) node {4};
    \draw (0,2.3) node {1};
    \draw (2.3,1) node {5};
    \draw [fill=black] (-1,0) circle (1.8pt);
    \draw [fill=black] (1,2) circle (1.8pt);
    \draw [fill=black] (1,0) circle (1.8pt);
    \draw [fill=black] (2,2) circle (1.8pt);
    \draw [fill=black] (2,0) circle (1.8pt);
    \end{scriptsize}
    \end{tikzpicture}
    \caption*{\large $\Gamma$}
  \end{minipage}
    \begin{minipage}[b]{0.3\textwidth}\centering
    \begin{tikzpicture}[line cap=round,line join=round,>=triangle 45,x=0.6cm,y=0.6cm]
    \draw [line width=1pt] (-1,2)-- (-1,0);
    \draw [line width=1pt] (0.5,1)-- (2,0);
    \draw [line width=1pt] (2,2)-- (2,0);
    \draw [line width=1pt] (-1,0)-- (0.5,1);
    \draw [line width=1pt] (-1,2)-- (0.5,1);
    \draw [line width=1pt] (0.5,1)-- (2,2);
    \begin{scriptsize}
    \draw (-1,2.5) node {1};
    \draw (-1,-0.5) node {3};
    \draw (0.5, 0.5) node {5};
    \draw (2,-0.5) node {4};
    \draw (2,2.5) node {2};
    \draw (-1.4,1) node {a};
    \draw (-0.3,1.9) node {b};
    \draw (-0.3,0.2) node {c};
    \draw (2.4,1) node {f};
    \draw (1.2,1.9) node {d};
    \draw (1.2,0.2) node {e};
    \draw [fill=black] (-1,2) circle (1.8pt);
    \draw [fill=black] (-1,0) circle (1.8pt);
    \draw [fill=black] (2,2) circle (1.8pt);
    \draw [fill=black] (0.5,1) circle (1.8pt);
    \draw [fill=black] (2,0) circle (1.8pt);
    \end{scriptsize}
    \end{tikzpicture}
    \caption*{\large $J(\Gamma)$}
  \end{minipage}
  \begin{minipage}[b]{0.3\textwidth}\centering
    \begin{tikzpicture}[line cap=round,line join=round,>=triangle 45,x=0.6cm,y=0.6cm]
    \draw [line width=1pt] (-1,2)-- (-1,1);
    \draw [line width=1pt] (-1,1)-- (-1,0);
    \draw [line width=1pt] (1,2)-- (1,1);
    \draw [line width=1pt] (1,1)-- (1,0);
    \draw [line width=1pt] (-1,1)-- (1,1);
    \begin{scriptsize}
    \draw [fill=black] (-1,2) circle (1.8pt);
    \draw [fill=black] (-1,1) circle (1.8pt);
    \draw [fill=black] (-1,0) circle (1.8pt);
    \draw [fill=black] (1,2) circle (1.8pt);
    \draw [fill=black] (1,1) circle (1.8pt);
    \draw [fill=black] (1,0) circle (1.8pt);
    \draw (-1.4,1) node {a};
    \draw (-1,2.4) node {d};
    \draw (-1,-0.4) node {e};
    \draw (1.4,1) node {f};
    \draw (1,2.4) node {b};
    \draw (1,-0.4) node {c};
    \end{scriptsize}
    \end{tikzpicture}
    \caption*{\large $J^2(\Gamma)$}
  \end{minipage}
    \caption{A graph $\Gamma$ and its first and second  jump graphs.}\label{fig:first_examples}
\end{figure}

This work builds on content from ``Subgraph distances in graphs defined by edge transfers'' by Chartrand et al. \cite{Char}. The authors of this paper also consider sequences of graphs $\{J^k(G)\}$ generated by iterating the jump graph operation. A graph sequence $\{G_k\}$ \emph{converges} if there is some index $K$ and graph $G$ such that $G_k \cong G$ for all $k \geq K$. A sequence \emph{terminates} if it is finite and a sequence \emph{diverges} if it does not converge or terminate. Theorem 4 of \cite{Char} determines which graphs have iterated jump graph sequences that converge; this follows from work by Aigner in \cite{Aig}. The authors also classify all graphs which have a terminating iterated jump graph sequence in Theorem~7. To do this, they use subdivisions and vertex splittings of graphs to state and prove Lemma 5. These tools are analogous to our \emph{snipped subgraph} in Definition~\ref{defn:snipped} and Lemma~\ref{Lemma: H_snipped_of_G, J(H)_in_J(G)}. We extend the investigation in \cite{Char} by asking about the quality of diverging sequences of iterated jump graphs. Are there repeating sequences of jump graphs? Can the graphs in $\{J^k(G)\}$ stay ``small'' as $k \to \infty$? Our results stated in Theorem~\ref{Theorem: accumulating}, Theorem~\ref{Theorem: Exploding graph}, Theorem~\ref{Theorem: J^k(G)=G} give an answer to these questions and do not appear in \cite{Char}.

In Section~\ref{Section 2}, we define iterated jump graphs, $d$-value, and snipped subgraphs. We then give some immediate results about these objects. Section~\ref{Section 3} lists all graphs with terminating jump graph sequences, making a ``tree'' of graphs related by the jump graph operation (Figure~\ref{fig:dissipating_graphs} on page \pageref{fig:dissipating_graphs}). Graphs with non-terminating jump graph sequences are covered in Section~\ref{Section 4}. We present $C_5$ and the net graph as fundamental to our study (see Figure~\ref{fig:C5_and_N} on page \pageref{fig:C5_and_N}), introduce some useful tools for casework, and examine all connected graphs based on diameter. In Section~\ref{Section 5}, our work culminates in some truly fascinating results. In particular, we see that there is no graph except for $C_5$ or the net graph which gives itself for \emph{some} iterated jump graph. That is, no diverging sequence of iterated jump graphs repeats itself. We also show that for all diverging sequences $\{J^k(G)\}$, the number of edges in $J^k(G)$ grows without bound.

\section{Preliminaries}\label{Section 2}

A graph $G$ is \emph{simple} if any two vertices are connected by at most one edge and there are no edges from a single vertex to itself. An \emph{isolated vertex} is not the endpoint of any edge. Throughout the paper, we assume that all graphs are simple, finite, and nonempty unless explicitly stated. Prior to performing the jump graph operation, we will often consider two graphs equivalent if they differ by only isolated vertices since these have no effect on $J(G)$. We denote the vertex set of a graph $G$ by $V(G)$ and the number of vertices by $|V(G)|$; we denote the edge set by $E(G)$ and the number of edges by $|E(G)|$. For all terms not defined here, see a standard graph theory textbook such as \cite{West}.

\begin{defn}\label{mypick}
The \emph{jump graph} $J(G)$ of a graph $G$ has vertices given by the edges of $G$ (i.e. $V(J(G)) = E(G)$). For two vertices $u$ and $v$ of $J(G)$, the edge $\{u, v\}$ is in $E(J(G))$ if and only if edges $u$ and $v$ are not incident in $G$.
\end{defn}

\begin{figure}[H]
  \centering
    \begin{tikzpicture}[line cap=round,line join=round,>=triangle 45,x=0.8cm,y=0.8cm, scale=1]
    \draw [line width=1pt] (-7,0)-- (-5,0);
    \draw [line width=1pt] (-5,0)-- (-5,-2);
    \draw [line width=1pt] (-5,-2)-- (-7,-2);
    \draw [line width=1pt] (-7,-2)-- (-7,0);
    \draw [line width=1pt] (-4,0)-- (-4,-2);
    \draw (-5.93,-3) node {\large $\Gamma$};
    \draw [line width=1pt,color=black] (-1,0)-- (-1,-2);
    \draw [line width=1pt,color=black] (-1,-2)-- (0,-1);
    \draw [line width=1pt,color=black] (0,-1)-- (-1,0);
    \draw [line width=1pt,color=black] (0,-1)-- (1,0);
    \draw [line width=1pt,color=black] (1,0)-- (1,-2);
    \draw [line width=1pt,color=black] (1,-2)-- (0,-1);
    \draw (0.02, -3) node {\large $J(\Gamma)$};
    \begin{scriptsize}
    \draw [fill=black] (-7,0) circle (1.8pt);
    \draw [fill=black] (-5,0) circle (1.8pt);
    \draw[color=black] (-5.93,-0.2) node {$1$};
    \draw [fill=black] (-5,-2) circle (1.8pt);
    \draw[color=black] (-5.19,-0.88) node {$2$};
    \draw [fill=black] (-7,-2) circle (1.8pt);
    \draw[color=black] (-5.93,-1.76) node {$3$};
    \draw[color=black] (-6.85,-0.88) node {$4$};
    \draw [fill=black] (-4,0) circle (1.8pt);
    \draw [fill=black] (-4,-2) circle (1.8pt);
    \draw[color=black] (-4.25,-0.88) node {$5$};
    \draw [fill=black] (-1,0) circle (1.8pt);
    \draw[color=black] (-1.2,0.29) node {$1$};
    \draw [fill=black] (-1,-2) circle (1.8pt);
    \draw[color=black] (-1.2,-2.30) node {$3$};
    \draw [fill=black] (0,-1) circle (1.8pt);
    \draw[color=black] (0.02,-0.61) node {$5$};
    \draw [fill=black] (1,0) circle (1.8pt);
    \draw[color=black] (1.25,0.29) node {$2$};
    \draw [fill=black] (1,-2) circle (1.8pt);
    \draw[color=black] (1.25,-2.30) node {$4$};
    \end{scriptsize}
    \end{tikzpicture}
    \caption{A graph $\Gamma$ and its jump graph $J(\Gamma)$. Vertices in $J(\Gamma)$ are connected iff the corresponding edges in $\Gamma$ do \emph{not} share an endpoint.}\label{fig:jump_ex}
\end{figure}
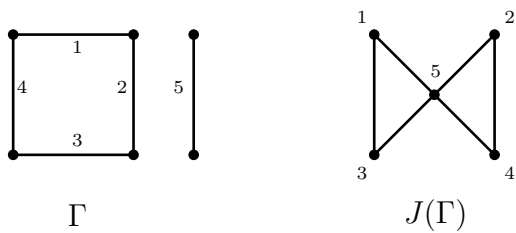

\begin{rem}
This notion is closely related to that of the matching complex; see, e.g., \cite[Chapter 11]{Jon} and \cite{Wachs}. Given a graph $G$, a \emph{matching} is a set of edges such that no two edges in the set are incident. The \emph{matching complex} of $G$ is the set of all matchings in $G$. Note that the jump graph contains only the matchings of cardinality at most two. In this way, the jump graph is a particular subset of the matching complex, known as its $1$-skeleton. Furthermore, a set of edges in $G$ forms a matching if and only if the corresponding vertices in $J(G)$ form a clique, a subset of vertices such that all two have an edge between them. Thus the jump graph encodes all of the same information as the matching complex. We came across the ideas for this paper through studying matching complexes; the project was rather far along when we discovered \cite{Char} and the study of jump graphs. Despite the direct connection between matching complexes and jump graphs, we do not know of any other sources relating the two. We wonder if there are results from the study of matching complexes which could be applied to jump graphs, or vice versa.
\end{rem}

Consider the connection between the jump graph and the well-studied \emph{line graph $L(G)$}. Vertices of the line graph represent edges in $G$, and these vertices are connected with an edge if and only if the corresponding edges are incident. With this definition, we see that $J(G)$ is the the complement of $L(G)$. The line graph has been an important tool for approaching our study of jump graphs. We will use this observation in Section~\ref{Section 4}.

With the above definition of the jump graph, there are a few immediate results we can state about subgraphs and induced subgraphs. A subgraph $H$ of $G$ is \emph{induced} if there are no two vertices in $H$ which are connected in $G$ but not in $H$.

\begin{lem} \label{Lemma: J(H)_subgraph_of_J(G)}
If $H$ is a subgraph of $G$, then $J(H)$ is an induced subgraph of $J(G)$.
\end{lem}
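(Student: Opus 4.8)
The plan is to unpack both defining properties of the jump graph—its vertex set and its adjacency rule—and check that each behaves correctly under passing to the subgraph $H$. Recall that $V(J(G)) = E(G)$ and $V(J(H)) = E(H)$. First I would observe that since $H$ is a subgraph of $G$, we have $E(H) \subseteq E(G)$, so $V(J(H)) \subseteq V(J(G))$; this establishes the vertex inclusion that any claim of ``subgraph'' requires. The content of the lemma is then entirely about edges, and there are two directions to verify, corresponding to the word \emph{induced}.

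Next I would take two arbitrary vertices $u, v$ of $J(H)$, i.e. two edges of $H$, and compare when the edge $\{u,v\}$ appears in $J(H)$ versus in $J(G)$. The key point is that the adjacency rule in Definition~\ref{mypick} is purely local: whether two edges $u$ and $v$ are incident depends only on whether they share an endpoint, and this is a property of the edges themselves, not of the ambient graph. Concretely, two edges of $H$ share an endpoint in $H$ if and only if they share an endpoint when viewed in $G$, because $H$ and $G$ assign the same endpoints to each edge of $H$. Therefore $\{u,v\} \in E(J(H))$ iff $u,v$ are non-incident in $H$ iff $u,v$ are non-incident in $G$ iff $\{u,v\} \in E(J(G))$. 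This biconditional is exactly what it means for $J(H)$ to be an \emph{induced} subgraph of $J(G)$: for any pair of its vertices, the edge is present in $J(H)$ precisely when it is present in $J(G)$.

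I would then assemble these observations into the formal statement: the vertex inclusion together with the edge biconditional shows that $J(H)$ is the subgraph of $J(G)$ induced on the vertex set $E(H)$. I do not anticipate a genuine obstacle here, since the argument is a direct consequence of the fact that incidence of edges is determined locally by shared endpoints. The one subtlety worth stating carefully is the direction showing \emph{induced} rather than merely \emph{subgraph}: one must confirm that no pair of edges of $H$ becomes non-incident in $G$ while being incident in $H$ (or vice versa), which cannot happen because the endpoint set of each edge is unchanged upon restriction. Making that invariance explicit is the crux of the proof, short as it is.
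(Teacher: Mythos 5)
Your proposal is correct and follows essentially the same route as the paper's proof: establish $V(J(H)) \subseteq V(J(G))$ from $E(H) \subseteq E(G)$, then check both directions of the incidence condition (non-incidence in $H$ implies non-incidence in $G$ for the subgraph containment, and the converse for inducedness). Your phrasing of the two directions as a single biconditional, justified by the observation that each edge of $H$ retains its endpoints in $G$, is just a compact restatement of the paper's argument.
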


\begin{proof} Let $H$ be a subgraph of $G$. Every edge in $H$ is also an edge in $G$ so $V(J(H)) \subseteq V(J(G))$. If there are two edges $e_1,e_2$ in $H$ which are non-incident in $H$ then they are also non-incident in $G$, and so $E(J(H)) \subseteq E(J(G))$. Together these imply $J(H) \subseteq J(G)$. Furthermore if there are two edges $e_1$, $e_2$ in $H$ which are non-incident in $G$, then they must be non-incident in $H$. Thus $J(H)$ must be an induced subgraph of $J(G)$.
\end{proof}

Lemma~\ref{Lemma: J(H)_subgraph_of_J(G)} allows us to carry subgraphs forward under the jump graph operation. To go in the backwards direction, given some graph $G$, we want to identify a graph $G^*$ such that $J(G^*) \cong G$. After doing small examples, one observes that $C_3$ (a cycle on three vertices) and $S_3$ (a star with three pendants) have the same jump graph: three isolated vertices. This shows that there is not always a unique choice for $G^*$. Because of the connection between jump graphs and line graphs, we can apply Whitney's Graph Isomorphism Theorem which says, in effect, that $C_3$ and $S_3$ are the only examples of a graph with a non-unique line graph (and therefore jump graph as well).

\begin{lem}\label{Lemma: H_subgraph_of_G}
If $H$ and $G$ are connected graphs, $J(G) \neq J(C_3)$, $J(H) \neq J(C_3)$, and $J(H)$ is an induced subgraph of $J(G)$ then $H$ is a subgraph of $G$.
\end{lem}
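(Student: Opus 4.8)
The plan is to reduce the statement to Whitney's Graph Isomorphism Theorem by passing to complements. Recall from the discussion above that $J(G) = \overline{L(G)}$ and $J(H) = \overline{L(H)}$, where the complements are taken on the vertex sets $E(G)$ and $E(H)$ respectively. Since complementation sends induced subgraphs to induced subgraphs, the hypothesis that $J(H)$ is an induced subgraph of $J(G)$ is equivalent to the statement that $L(H)$ is an induced subgraph of $L(G)$. Concretely, an embedding of $J(H)$ as an induced subgraph of $J(G)$ is an injection $\phi \colon E(H) \to E(G)$ for which $e_1, e_2$ are non-incident in $H$ if and only if $\phi(e_1), \phi(e_2)$ are non-incident in $G$; the same $\phi$ is then an embedding of $L(H)$ as an induced subgraph of $L(G)$.

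Next I would identify this induced subgraph of $L(G)$ with the line graph of an honest subgraph of $G$. Let $F = \phi(E(H)) \subseteq E(G)$ and let $G''$ be the subgraph of $G$ whose edges are exactly those in $F$ and whose vertices are the endpoints of these edges. Because two edges of $F$ share an endpoint in $G''$ precisely when they share an endpoint in $G$, we have $L(G'') = L(G)[F] \cong L(H)$. Assuming $H$ has at least one edge (the single-vertex case being trivial, as then $J(H)$ is empty), connectedness of $H$ forces $L(H)$ to be connected, hence $L(G'')$ is connected, and therefore $G''$ is connected as well.

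Finally I would apply Whitney's Graph Isomorphism Theorem to the two connected graphs $G''$ and $H$, which have isomorphic line graphs. The only obstruction to concluding $G'' \cong H$ is the exceptional pair $\{C_3, S_3\}$, the unique connected graphs sharing a line graph; but this pair arises exactly when $L(H) \cong C_3$, i.e. $J(H) \cong J(C_3)$, which the hypotheses exclude (and the parallel hypothesis $J(G) \neq J(C_3)$ removes the same ambiguity on the $G$ side). Hence $G'' \cong H$, and since $G''$ is a subgraph of $G$ by construction, $H$ is isomorphic to a subgraph of $G$, as desired. This is precisely the converse direction to Lemma~\ref{Lemma: J(H)_subgraph_of_J(G)}.

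The main obstacle I anticipate is the middle step: verifying cleanly that an induced subgraph of a line graph is itself the line graph of an edge-subgraph, and in particular that discarding isolated vertices yields a connected graph to which Whitney's theorem legitimately applies. The complement bookkeeping and the careful placement of the $C_3$/$S_3$ exception are where the stated hypotheses $J(G) \neq J(C_3)$ and $J(H) \neq J(C_3)$ earn their keep.
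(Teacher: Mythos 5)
Your proof is correct and follows the same route as the paper: pass to complements so that $L(H)$ is an induced subgraph of $L(G)$, then invoke Whitney's theorem with the $C_3$/$S_3$ exception ruled out by the hypothesis $J(H)\neq J(C_3)$. In fact your middle step---realizing the induced subgraph $L(G)[F]$ as the line graph of the edge-subgraph $G''$ and checking its connectedness before applying Whitney---is exactly the detail the paper's own terse proof glosses over when it jumps from ``$H$ and $G$ are uniquely determined'' to ``$H$ must be a subgraph of $G$,'' so your write-up is if anything the more complete of the two.
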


\begin{proof}
Suppose $J(H)$ is an induced subgraph of $J(G)$. Then, since $J(G)^c = L(G)$, we know that $L(H)$ is an induced subgraph of $L(G)$. Because we assume that $J(H) \neq J(C_3)$, we know that $L(H) \neq L(C_3)$. Hence, by Whitney's Graph Isomorphism Theorem \cite{Whit}, $H$ is uniquely determined. By the same logic, since $J(G) \neq J(C_3)$ then $G$ is uniquely determined as well. Then $H$ must be a subgraph of $G$.
\end{proof}

Notice that Lemma~\ref{Lemma: H_subgraph_of_G} is a partial converse of Lemma~\ref{Lemma: J(H)_subgraph_of_J(G)}. We can expand Lemma~\ref{Lemma: H_subgraph_of_G} to include some disconnected graphs with more conditions, but this is not relevant to the bulk of the paper. This ``backwards jump graph'' operation is used only in Section~\ref{Section 3} as we construct Figure~\ref{fig:dissipating_graphs}. Now, we turn attention towards the main object of study: \emph{iterated jump graphs}.

\begin{defn}
Let $G$ be a graph and define $J^0(G) = G$. For $k \geq 1$, the \emph{$k^{\textrm{th}}$ jump graph}, denoted $J^k(G)$, is the jump graph of $J^{k-1}(G)$.
\end{defn}

Our goal is to study the behavior of the sequence $\{J^k(G)\}$. Of particular interest is determining whether a given graph dissipates in the following sense.

\begin{defn}
A graph $G$ \emph{dissipates} if there is some $k \geq 0$ such that $J^k(G) = \emptyset$. The \emph{dissipation number}, denoted $d(G)$, is the smallest $k \geq 0$ such that $J^k(G) = \emptyset$. If there is no such $k$, then $d(G) = \infty$.
\end{defn}

\begin{rem}
A graph $G$ dissipates if and only if the sequence $\{J^k(G)\}$ terminates. The dissipation number of $G$ is the length of the sequence $\{J^k(G)\}$ where we do not include $\emptyset$.
\end{rem}

For examples of dissipation, see Figure~\ref{fig:dissipating_graphs} on page \pageref{fig:dissipating_graphs}. The reader can find the graph $\Gamma$ from Figure~\ref{fig:jump_ex} in the lower left-hand corner. By counting the arrows between $\Gamma$ and the empty set, we see that $d(\Gamma) = 7$.

If $d(G) < \infty$, we will sometimes say that $G$ is $d$-finite. Otherwise, we will say that $G$ is $d$-infinite. In studying the behavior of $J^k(G)$, we will see that the following two notions play an integral part. 

\begin{defn}
Suppose we have a graph $G$ with vertex set $V(G) = \{v_1,\dots,v_n$\} and edge set $E(G)$. A \emph{quotient graph} $Q$ of $G$ is defined in the following way. Take some partition of $V(G)$ and then apply the equivalence relation created by this partition, letting $[v_i]$ denote such an equivalence class. The vertex set and edge set of $Q$ are defined below.
\[V(Q) = \Big\{[v_i] : v_i \in V(G)\Big\} \hspace{30pt} E(Q) = \Big\{\{[v_i],[v_j]\} : \{v_i,v_j\} \in E(G), [v_i] \neq [v_j]\Big\}\]
\end{defn}

Note that, by construction, $Q$ will not have any double edges or loops. In a qualitative way, a quotient graph is obtained by gluing vertices together and then deleting double edges and loops.

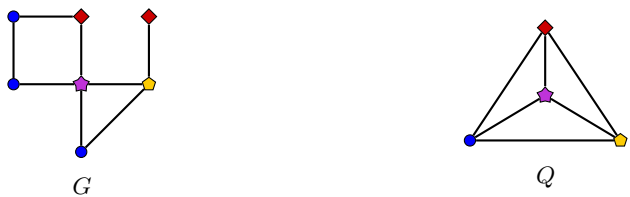
\begin{figure}[H]
    \centering
    \begin{subfigure}{0.4\textwidth}
        \centering
        \begin{tikzpicture}[scale=0.45]
        \node (a) [vertex,draw=black,fill=blue] at (-2,2) {};
        \node (b) [vertex,diamond,draw=black,fill=red] at (0,2) {};
        \node (c) [vertex,star,star points=5,draw=black,fill=pur] at (0,0) {};
        \node (d) [vertex,draw=black,fill=blue] at (-2,0) {};
        \node (e) [vertex,draw=black,fill=blue] at (0,-2) {};
        \node (f) [vertex,regular polygon,regular polygon sides=5,draw=black,fill=gold] at (2,0) {};
        \node (g) [vertex,diamond,draw=black,fill=red] at (2,2) {};
        
        \draw [edge] (a) -- (b) -- (c) -- (d) -- (a);
        \draw [edge] (c) -- (e) -- (f) -- (c);
        \draw [edge] (f) -- (g);
        \end{tikzpicture}
        \caption*{$G$}\label{fig:short_path}
    \end{subfigure}
    \begin{subfigure}{0.4\textwidth}
        \centering
        \begin{tikzpicture}
            \node (a) [vertex,diamond,draw=black,fill=red] at (0,0.9) {};
            \node (b) [vertex,regular polygon,regular polygon sides=5,draw=black,fill=gold] at (1,-0.6) {};
            \node (c) [vertex,star,star points=5,draw=black,fill=pur] at (0,0) {};
            \node (d) [vertex,draw=black,fill=blue] at (-1,-0.6) {};
            
            \draw [edge] (a) -- (b) -- (c) -- (a);
            \draw [edge] (b) -- (d) -- (c);
            \draw [edge] (a) -- (d);
        \end{tikzpicture}
        \caption*{$Q$}
    \end{subfigure}
    \caption{A graph $G$ and a quotient $Q$ of $G$. The different colors and shapes of vertices represent the partition of $V(G)$.} \label{fig:quotient_example}
\end{figure}

\begin{defn}\label{defn:snipped}
A \emph{snipped subgraph} of a graph $G$ is a quotient graph of a subgraph of $G$.
\end{defn}

\begin{example}
Figure~\ref{fig:snipped_example} shows an example of a graph $G$ and three of its snipped subgraphs. The subgraph of $G$ of which $H_i$ is a quotient can be identified by the edge labels. Observe that $H_1$ is a subgraph of $G$ but $H_2$ and $H_3$ are nontrivial quotients of a subgraph of $G$.
\end{example}

\begin{rem}
In particular, any subgraph of $G$ is also a snipped subgraph of $G$. This means $G$ is a snipped subgraph of itself.
\end{rem}

We have defined a snipped subgraph by starting with $G$, taking a subgraph, and gluing together some of the vertices. This direction is helpful for understanding the definition, but the motivation is more clear if we conceptualize the snipped subgraph in another way. Suppose $H$ is a snipped subgraph of $G$. Split apart the quotiented vertices of $H$ and overlay it on top of $G$. Notice that the vertex-splitting action preserves disconnections among edges in $H$. Looking back at the example in Figure~\ref{fig:snipped_example}, we notice that in $H_1$, edges 1 and 6 are non-incident and they are still non-incident in $G$. However, in graph $H_2$, edges 1 and 6 are incident, but these edges are non-incident in $G$.

Edges in $J(H)$ correspond to disconnections between edges in $H$. In this way, we know edges in $J(H)$ will be preserved under ``snipping'' of $H$. This property allows us to state the result in Lemma~\ref{Lemma: H_snipped_of_G, J(H)_in_J(G)}. 

\begin{lem}\label{Lemma: baby}
Suppose that $H$ is a quotient graph of $G$. Then there exists some subgraph $G' \subseteq G$ such that $|E(G')| = |E(H)|$ and $H$ is a quotient graph of $G'$
\end{lem}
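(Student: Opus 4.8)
The plan is to realize $H$ as a quotient of $G$ via an explicit partition and then to select, for each edge of $H$, a single preimage edge in $G$; the spanning subgraph $G'$ built from these chosen edges should do the job. First I would fix the partition $P$ of $V(G)$ witnessing that $H$ is a quotient of $G$, so that $V(H) = \{[v] : v \in V(G)\}$ and $E(H) = \{\{[u],[w]\} : \{u,w\} \in E(G),\ [u] \neq [w]\}$. Every edge $\{u,w\} \in E(G)$ joining distinct classes descends to a unique edge $\{[u],[w]\}$ of $H$, so there is a well-defined map $\pi \colon \{e \in E(G) : e \text{ joins distinct classes}\} \to E(H)$, and $\pi$ is surjective by the definition of the quotient.

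Next I would choose a section of $\pi$: for each edge $f \in E(H)$ select one edge $\phi(f) \in E(G)$ with $\pi(\phi(f)) = f$. Let $G'$ be the spanning subgraph of $G$ with $V(G') = V(G)$ and $E(G') = \{\phi(f) : f \in E(H)\}$; this is a subgraph of $G$. Since distinct edges of $H$ have disjoint preimages under $\pi$, the section $\phi$ is injective, which gives $|E(G')| = |E(H)|$ immediately.

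It then remains to verify that applying the same partition $P$ to $G'$ returns $H$. Because $V(G') = V(G)$, the quotient of $G'$ by $P$ has vertex set $\{[v] : v \in V(G)\} = V(H)$, and its edge set is $\{\pi(e) : e \in E(G')\} = \{f : f \in E(H)\} = E(H)$ since $\phi$ is a section of $\pi$. The only point requiring genuine care — and the step I would flag as the main obstacle — is checking that no two edges of $G'$ collapse to a common edge of $H$ and that none becomes a loop: the first holds because $\phi$ is injective with $\pi \circ \phi = \mathrm{id}$, so the quotient performs no edge-merging on $G'$, and the second because every chosen edge joins distinct classes by construction. Hence the quotient of $G'$ by $P$ has exactly the vertex and edge sets of $H$, so $H$ is a quotient graph of $G'$, completing the argument.
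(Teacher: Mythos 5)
Your proof is correct and follows exactly the approach the paper sketches: the paper's entire ``proof'' is the single remark that one chooses, for each edge $\{[v],[u]\}$ of $H$, one preimage edge $\{v,u\}$ of $G$, which is precisely your section $\phi$ of the surjection $\pi$. Your write-up simply makes explicit the injectivity of the section and the verification that quotienting $G'$ by the same partition recovers $H$, which the paper leaves implicit.
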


The proof of this lemma amounts to choosing an edge $\{v, u\}$ in $G$ for each edge $\{[v], [u]\}$ in $H$ such that, as the notation suggests, $\{v, u\}$ is mapped to $\{[v], [u]\}$ under the quotient identifications.


\begin{figure}
  \centering
  \begin{minipage}[b]{0.3\textwidth}\centering
    \begin{tikzpicture}[line cap=round,line join=round,>=triangle 45,x=0.8cm,y=0.8cm]
    \draw [line width=1pt] (-3,3.5)-- (-2,2);
    \draw [line width=1pt] (-2,2)-- (-3,0.45);
    \draw [line width=1pt] (-3.5,2)-- (-2,2);
    \draw [line width=1pt] (-2,2)-- (-1,2);
    \draw [line width=1pt] (-1,2)-- (0,3);
    \draw [line width=1pt] (-1,2)-- (0,1);
    \draw [line width=1pt] (-3.5,2)-- (-3,0.45);
    \draw [line width=1pt] (0,3)-- (0,1);
    \begin{scriptsize}
    \draw [fill=black] (-3,3.5) circle (1.8pt);
    \draw [fill=black] (-2,2) circle (1.8pt);
    \draw[color=black] (-2.7,2.7) node {1};
    \draw [fill=black] (-3,0.45) circle (1.8pt);
    \draw[color=black] (-2.42,1.1) node {4};
    \draw [fill=black] (-3.5,2) circle (1.8pt);
    \draw[color=black] (-2.84,1.8) node {2};
    \draw [fill=black] (-1,2) circle (1.8pt);
    \draw[color=black] (-1.44,1.8) node {5};
    \draw [fill=black] (0,3) circle (1.8pt);
    \draw[color=black] (-0.75,2.6) node {6};
    \draw [fill=black] (0,1) circle (1.8pt);
    \draw[color=black] (-0.8,1.4) node {7};
    \draw[color=black] (-3.6,1.27) node {3};
    \draw[color=black] (0.28,2.15) node {8};
    \end{scriptsize}
    \end{tikzpicture}
    \caption*{\large $G$}
  \end{minipage} \\
  \begin{minipage}[b]{0.3\textwidth}\centering
    \begin{tikzpicture}[line cap=round,line join=round,>=triangle 45,x=0.7cm,y=0.7cm]
    \draw [line width=1pt] (-2,1)-- (-1,0);
    \draw [line width=1pt] (-3,0)-- (-1,0);
    \draw [line width=1pt] (0,0)-- (1,1);
    \draw [line width=1pt] (0,0)-- (1,-1);
    \draw [line width=1pt] (-2,-1)-- (-1,0);
    \begin{scriptsize}
    \draw [fill=black] (-2,1) circle (1.8pt);
    \draw [fill=black] (-1,0) circle (1.8pt);
    \draw[color=black] (-1.64,1.01) node {1};
    \draw [fill=black] (1,1) circle (1.8pt);
    \draw[color=black] (0.38,0.99) node {6};
    \draw [fill=black] (1,-1) circle (1.8pt);
    \draw [fill=black] (-3,0) circle (1.8pt);
    \draw[color=black] (-2.14,0.39) node {2};
    \draw [fill=black] (0,0) circle (1.8pt);
    \draw[color=black] (0.3,-0.65) node {7};
    \draw [fill=black] (-2,-1) circle (1.8pt);
    \draw[color=black] (-1.2,-0.49) node {4};
    \end{scriptsize}
    \end{tikzpicture}
    \caption*{\large $H_1$}
  \end{minipage}
  \begin{minipage}[b]{0.3\textwidth}\centering
    \begin{tikzpicture}[line cap=round,line join=round,>=triangle 45,x=0.7cm,y=0.7cm]
    \draw [line width=1pt] (5,1)-- (7,1);
    \draw [line width=1pt] (7,1)-- (7,-1);
    \draw [line width=1pt] (7,-1)-- (5,-1);
    \draw [line width=1pt] (5,-1)-- (5,1);
    \begin{scriptsize}
    \draw [fill=black] (5,1) circle (1.8pt);
    \draw [fill=black] (7,1) circle (1.8pt);
    \draw[color=black] (6.1,1.41) node {1};
    \draw [fill=black] (7,-1) circle (1.8pt);
    \draw[color=black] (7.2,0.11) node {6};
    \draw [fill=black] (5,-1) circle (1.8pt);
    \draw[color=black] (6.02,-1.23) node {7};
    \draw[color=black] (4.6,0.17) node {4};
    \end{scriptsize}
    \end{tikzpicture}
    \caption*{\large $H_2$}
  \end{minipage}
  \begin{minipage}[b]{0.3\textwidth}\centering
    \begin{tikzpicture}[line cap=round,line join=round,>=triangle 45,x=0.7cm,y=0.7cm]
    \draw [line width=1pt] (12,2)-- (13.5,1);
    \draw [line width=1pt] (13.5,1)-- (13,-1);
    \draw [line width=1pt] (13,-1)-- (11,-1);
    \draw [line width=1pt] (10.5,1)-- (12,2);
    \draw [line width=1pt] (10.5,1)-- (11,-1);
    \begin{scriptsize}
    \draw [fill=black] (12,2) circle (1.8pt);
    \draw [fill=black] (13.5,1) circle (1.8pt);
    \draw[color=black] (12.8,1.8) node {3};
    \draw [fill=black] (13,-1) circle (1.8pt);
    \draw[color=black] (13.5,0.19) node {8};
    \draw [fill=black] (11,-1) circle (1.8pt);
    \draw[color=black] (12.02,-1.23) node {7};
    \draw [fill=black] (10.5,1) circle (1.8pt);
    \draw[color=black] (10.96,1.83) node {1};
    \draw[color=black] (10.5,0.17) node {5};
    \end{scriptsize}
    \end{tikzpicture}
    \caption*{\large $H_3$}
  \end{minipage}
    \caption{Graphs $H_1$, $H_2$, and $H_3$ are snipped subgraphs of $G$}\label{fig:snipped_example}
\end{figure}

\begin{lem} \label{Lemma: H_snipped_of_G, J(H)_in_J(G)} If $H$ is a snipped subgraph of $G$, then $J(H)$ is a subgraph of $J(G)$. 
\end{lem}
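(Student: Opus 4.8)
The plan is to unwind the definition of a snipped subgraph into its two constituent operations---passing to a subgraph and forming a quotient---and to handle each separately. By Definition~\ref{defn:snipped} there is a subgraph $G' \subseteq G$ such that $H$ is a quotient graph of $G'$. Lemma~\ref{Lemma: J(H)_subgraph_of_J(G)} already tells us that $J(G')$ is an induced subgraph of $J(G)$, so it suffices to prove the quotient case in isolation: if $H$ is a quotient graph of $G'$, then $J(H) \subseteq J(G')$. Chaining the two inclusions then yields $J(H) \subseteq J(G') \subseteq J(G)$.

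For the quotient case, I would first invoke Lemma~\ref{Lemma: baby} to replace $G'$ by a subgraph on which the quotient map is an edge-bijection. That lemma produces a subgraph $G'' \subseteq G'$ with $|E(G'')| = |E(H)|$ of which $H$ is still a quotient; since the quotient map is surjective on edges and the two edge sets have equal size, it restricts to a bijection $E(G'') \to E(H)$. This is precisely a bijection $V(J(H)) \to V(J(G''))$, which, composed with the subgraph inclusions, supplies the required injection $V(J(H)) \hookrightarrow V(J(G))$.

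The heart of the argument is the edge condition, and this is where I expect the real content to lie. Let $\varphi$ denote the quotient map on vertices, so that each edge $\varepsilon$ of $H$ lifts to a unique edge $\tilde\varepsilon$ of $G''$ with $\varphi(\tilde\varepsilon) = \varepsilon$. I claim non-incident edges of $H$ lift to non-incident edges of $G''$, and the cleanest route is the contrapositive: if two lifts $\tilde\varepsilon_1, \tilde\varepsilon_2$ share a vertex $w$ in $G''$, then their images share the vertex $\varphi(w)$ in $H$, so $\varepsilon_1$ and $\varepsilon_2$ are incident in $H$. In other words, gluing vertices can only create incidences, never destroy them---exactly the ``vertex-splitting preserves disconnections'' intuition described before the lemma. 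Hence whenever $\{\varepsilon_1,\varepsilon_2\}$ is an edge of $J(H)$ (i.e.\ $\varepsilon_1,\varepsilon_2$ are non-incident in $H$), the lifts $\tilde\varepsilon_1,\tilde\varepsilon_2$ are non-incident in $G''$, so $\{\tilde\varepsilon_1,\tilde\varepsilon_2\}$ is an edge of $J(G'')$.

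Assembling the pieces, the vertex bijection carries $E(J(H))$ into $E(J(G''))$, giving $J(H) \subseteq J(G'')$, and pushing the subgraph inclusions $G'' \subseteq G' \subseteq G$ through Lemma~\ref{Lemma: J(H)_subgraph_of_J(G)} upgrades this to $J(H) \subseteq J(G)$. The one subtlety I want to flag is that the incidence argument only makes sense once a single consistent lift has been fixed for each edge of $H$; this is exactly what the edge-bijection from Lemma~\ref{Lemma: baby} guarantees, and without it distinct edges of $H$ need not be matched to distinct vertices of $J(G)$, so the claimed inclusion would not even be well-typed.
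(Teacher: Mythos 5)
Your proposal is correct and follows essentially the same route as the paper's proof: both reduce to an edge-bijective quotient via Lemma~\ref{Lemma: baby}, obtain the vertex bijection $V(J(H)) \to V(J(G''))$ from it, verify the edge condition by observing that the quotient can only create (never destroy) incidences, and then chain through Lemma~\ref{Lemma: J(H)_subgraph_of_J(G)}. The only cosmetic difference is that you phrase the incidence step as a contrapositive (shared lift vertex implies shared image vertex) where the paper argues directly that distinct equivalence classes force distinct preimages; these are the same observation.
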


\begin{proof}
Since $H$ is a quotient of a subgraph of $G$, there is some subgraph $G' \subseteq G$ such that $H$ is a quotient graph of $G'$ and $|E(G')| = |E(H)|$ by Lemma~\ref{Lemma: baby}. We will show that $J(H)$ is a subgraph of $J(G')$; then by Lemma~\ref{Lemma: H_subgraph_of_G}, $J(G') \subseteq J(G)$ so we have $J(H) \subseteq J(G)$. To do this, we will find a graph homomorphism from $J(H)$ to $J(G')$: an injective map $\varphi : V(J(H)) \to V(J(G'))$ such that 
\begin{equation}\label{eq:edge_condition}
\text{if } \{x, y\} \in E(J(H)), \text{ then } \{\varphi(x), \varphi(y)\} \in E(J(G')). 
\end{equation}

For a vertex $v \in V(G')$ let $[v]$ be its equivalence class given by the quotient, that is, its image under the quotient map from $G'$ to $H$. Let $q : E(G') \to E(H)$ be the quotient map such that $q(\{u, v\}) = \{[u], [v]\}$. This map is surjective by the definition of a quotient graph. An edge $\{[u], [v]\}$ exists in $E(H)$ if and only if there is some $u' \in [u]$ and $v' \in [v]$ such that $\{v', u'\} \in E(G')$. We also note that $E(G')$ and $E(H)$ are finite sets with the same cardinality so the map $q$ must also be injective. Hence, we can define an inverse function $\varphi := q^{-1} : E(H) \to E(G')$. For each edge $\{x, y\} \in E(H)$, there is a unique choice of $\{u, v\} \in E(G')$ such that $[u] = x$ and $[v] = y$.

Next, by definition of jump graphs, $E(H) = V(J(H))$ and $E(G') = V(J(G'))$. This means that $\varphi$ is a bijection from $V(J(H))$ to $V(J(G'))$.

Lastly, we show that $\varphi$ satisfies the edge condition (\ref{eq:edge_condition}). Let an edge $\{u, v\}$ in $G'$ be denoted by $uv$ and let an edge $\{[u], [v]\}$ in $H'$ be denoted by $[u][v]$. Suppose that $\{[u][v], [a][b]\}$ is an edge in $J(H)$. Then $[u][v]$ and $[a][b]$ are non-incident edges in $H$ so the endpoints $[u]$, $[v]$, $[a]$, and $[b]$ are all distinct in $H$. Without loss of generality, let
\[\varphi([u][v]) = uv \hspace{10pt} \text{and} \hspace{10pt} \varphi([a][b]) = ab.\]
The endpoints $u$, $v$, $a$, and $b$ must all be distinct in $G'$ since their images under $q$ are distinct in $H$. Thus, edges $uv$ and $ab$ are not incident in $G'$ and so $\{uv, ab\}$ is an edge of $J(G')$.

Hence, we see that $\varphi$ is an injective graph homomorphism and so $J(H)$ is a subgraph of $J(G')$. And since $J(G') \subseteq J(G)$ by Lemma~\ref{Lemma: H_subgraph_of_G}, we also conclude that $J(H) \subseteq J(G)$.
\end{proof}

Consider Lemmas~\ref{Lemma: J(H)_subgraph_of_J(G)} and \ref{Lemma: H_snipped_of_G, J(H)_in_J(G)} side by side. Lemma~\ref{Lemma: J(H)_subgraph_of_J(G)} implies the existence of a certain \emph{induced} subgraph of $J(G)$ while Lemma~\ref{Lemma: H_snipped_of_G, J(H)_in_J(G)} implies the existence of a certain subgraph of $J(G)$. Because every subgraph is also itself a \emph{snipped} subgraph, the set of subgraphs of $G$ is contained in the set of snipped subgraphs of $G$. Hence, it is easier to satisfy the assumptions of Lemma~\ref{Lemma: H_snipped_of_G, J(H)_in_J(G)} than those of Lemma~\ref{Lemma: J(H)_subgraph_of_J(G)}. Recognizing also that induced subgraphs are of no particular use to this problem, Lemma~\ref{Lemma: H_snipped_of_G, J(H)_in_J(G)} will be our primary tool as we continue in our discussion.

\begin{prop}\label{Prop: subgraph-d-value}
If $H$ is a snipped subgraph of $G$, then $d(H) \leq d(G)$.
\end{prop}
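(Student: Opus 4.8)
The plan is to show by induction on $k$ that $J^k(H)$ is a snipped subgraph of $J^k(G)$ for every $k \geq 0$, and then extract the dissipation inequality from the point where the $G$-sequence reaches $\emptyset$. First I would dispose of the trivial case: if $d(G) = \infty$, then $d(H) \leq d(G)$ holds automatically, so I may assume $d(G) = K < \infty$, which means $J^K(G) = \emptyset$.

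For the induction, the base case $k = 0$ is precisely the hypothesis that $H = J^0(H)$ is a snipped subgraph of $G = J^0(G)$. For the inductive step, suppose $J^k(H)$ is a snipped subgraph of $J^k(G)$. Lemma~\ref{Lemma: H_snipped_of_G, J(H)_in_J(G)} then yields that $J^{k+1}(H) = J\big(J^k(H)\big)$ is a \emph{subgraph} of $J\big(J^k(G)\big) = J^{k+1}(G)$. The observation that keeps the induction running is that every subgraph is in particular a snipped subgraph, as noted in the remark following Definition~\ref{defn:snipped}; hence $J^{k+1}(H)$ is itself a snipped subgraph of $J^{k+1}(G)$, completing the step.

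In particular, for every $k \geq 1$ the graph $J^k(H)$ is an honest subgraph of $J^k(G)$. Taking $k = K$, we obtain $J^K(H) \subseteq J^K(G) = \emptyset$; since the only subgraph of the empty graph is the empty graph, this forces $J^K(H) = \emptyset$. By the definition of the dissipation number, $d(H)$ is the least index at which the $H$-sequence reaches $\emptyset$, so $d(H) \leq K = d(G)$, as claimed.

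I do not anticipate a serious obstacle. The one point that requires care is the interplay between \emph{snipped subgraph} and \emph{subgraph} in the inductive hypothesis: Lemma~\ref{Lemma: H_snipped_of_G, J(H)_in_J(G)} outputs only a subgraph at each stage, not a snipped subgraph, so I must invoke the fact that subgraphs are themselves snipped in order to feed the conclusion back into the lemma at the next iteration. Keeping the hypothesis phrased in terms of snipped subgraphs, rather than subgraphs, is exactly what makes the induction self-sustaining.
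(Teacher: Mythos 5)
Your proof is correct and follows essentially the same route as the paper's: reduce to the case $d(G)$ finite, iterate Lemma~\ref{Lemma: H_snipped_of_G, J(H)_in_J(G)} to get $J^k(H) \subseteq J^k(G)$, and evaluate at $k = d(G)$. Your explicit remark that the lemma's output (a subgraph) must be re-read as a snipped subgraph to sustain the iteration is exactly the point the paper leaves implicit in the phrase ``applying the lemma iteratively,'' so this is a slightly more careful write-up of the same argument.
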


\begin{proof}
    First, consider if $d(G) = \infty$. Then, trivially we have $d(H) \leq d(G) = \infty$.
    
    Now, consider when $d(G)$ is finite and let $d(G) = d_0$. If $d_0 = 0$ then $G = \emptyset$ and the result is immediate because $\emptyset$ is the only snipped subgraph of $\emptyset$. Thus, let $d_0$ be at least $1$. Applying Lemma~\ref{Lemma: H_snipped_of_G, J(H)_in_J(G)} iteratively,
    \[J^k(H) \subseteq J^k(G) \]
    for all $k \geq 1$. Now, let $k = d_0$ and the above statement becomes
    \[J^{d_0}(H) \subseteq J^{d_0}(G) = \emptyset . \]
    
    \noindent And so we have $J^{d_0}(H) = \emptyset$ and it follows that
    
    \[d(H) \leq d_0 = d(G).\qedhere \]\end{proof}

\begin{cor}\label{Corollary: snipped_d=infinity}
If $H$ is a snipped subgraph of $G$ and $d(H) = \infty$, then $d(G) = \infty$.
\end{cor}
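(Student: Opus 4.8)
The plan is to apply Proposition~\ref{Prop: subgraph-d-value} directly, since the hypotheses of this corollary are precisely a special case of that proposition's hypotheses. First I would observe that ``$H$ is a snipped subgraph of $G$'' is exactly the standing assumption of Proposition~\ref{Prop: subgraph-d-value}, so we immediately obtain the inequality $d(H) \leq d(G)$, interpreted in the extended ordering on $\mathbb{N} \cup \{\infty\}$.

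Next I would invoke the extra hypothesis $d(H) = \infty$. Substituting into the inequality yields $\infty = d(H) \leq d(G)$. Since $\infty$ is the maximum element of $\mathbb{N} \cup \{\infty\}$, the only value $d(G)$ can take while being at least $\infty$ is $\infty$ itself, so $d(G) = \infty$ as desired.

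There is essentially no obstacle here, as the corollary is just a restatement of Proposition~\ref{Prop: subgraph-d-value} under the additional assumption that the smaller dissipation number is infinite. The only point requiring a word of care is the meaning of the inequality when one side is $\infty$, which is handled by the convention that $\infty$ dominates every finite value. Equivalently, one could argue by contraposition: if $d(G)$ were finite, then Proposition~\ref{Prop: subgraph-d-value} would force $d(H) \leq d(G) < \infty$, contradicting the hypothesis $d(H) = \infty$.
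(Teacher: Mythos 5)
Your proof is correct and matches the paper's intent exactly: the corollary is stated without proof precisely because it follows immediately from Proposition~\ref{Prop: subgraph-d-value} in the way you describe. The contrapositive remark is a fine touch but nothing more is needed.
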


Corollary~\ref{Corollary: snipped_d=infinity} reveals our primary use of snipped subgraphs. When presented with a graph $G$, we can look for any snipped subgraph $H$ which we know to have $d(H) = \infty$. If we can find such a graph then we know that $d(G) = \infty$ as well. This makes it much simpler to determine if a given graph will dissipate or not. We will see Corollary~\ref{Corollary: snipped_d=infinity} become very relevant in Section~\ref{Section 4}.

\begin{rem}
In \cite{Char}, the authors use vertex splitting and subdivisions of graphs to serve the same purpose as our snipped subgraph. These tools are applied when they state and prove Lemma 5 which is equivalent to our Corollary~\ref{Corollary: snipped_d=infinity}. We found the concept of the snipped subgraph useful because it combines both vertex splitting and graph subdivisions into one tool, getting at the crux of when we see $J(H)$ appear as a subgraph of $J(G)$.
\end{rem}

\section{Graphs with finite $d$ value}\label{Section 3}

There are relatively few graphs with a finite $d$ value. If $G$ is $d$-finite then $J^k(G)$ is also $d$-finite for all $k$ and $J^{d(G)}(G) = \emptyset$. Using these observations, we can build a tree of all $d$-finite graphs by ``working backwards'' from the empty set. To start, $J^k(G) = \emptyset$ if and only if $J^{k - 1}(G)$ is the empty set or a set of isolated vertices. Considering this latter possibility, we can find all the options for $J^{k - 2}(G)$. We continue doing this ``backwards jump graph operation'' until we are left with some $J^{k - n}(G)$ which cannot be the jump graph of any graph. To know when this point is reached, we use Beineke's characterization of line graphs from ``Characterizations of derived graphs'' \cite{Bein}.


In this 1970 paper, Beineke showed that a graph is a line graph for another graph if and only if it does not have one of the nine induced subgraphs in Figure~\ref{fig:beineke_line_graphs} \cite{Bein}. Recall that the line graph is the complement of the jump graph. This implies that if some graph $G$ is the jump graph of another graph $G^*$ (i.e. $G = J(G^*)$) then $G^c$ must not have any of Beineke's nine forbidden induced subgraphs. Equivalently, $G$ must not have the complement of any forbidden graph as an induced subgraph. These complements are shown in Figure~\ref{fig:beineke_jump_graphs}. 

The tree of graphs made through this process is shown in Figure~\ref{fig:dissipating_graphs}. Infinite families of graphs are blocked in grey and isolated vertices are not explicitly drawn where they may appear. For example, see that the graphs $C_4$ and $C_4$ with a diagonal edge (in the lower third of the figure) lead to the same graph in Figure~\ref{fig:dissipating_graphs}. We ignore isolated vertices because they have no effect on $J(G)$. Induced subgraphs from Figure~\ref{fig:beineke_jump_graphs} are highlighted in red.

While we only outline an idea for a proof here, Figure~\ref{fig:dissipating_graphs} does contains all $d$-finite graphs as shown rigorously in \cite{Char}. The reader can look at Theorem 7 in \cite{Char} if they would like to see the list in Figure~\ref{fig:dissipating_graphs} catalogued in rigor. 

\begin{figure}
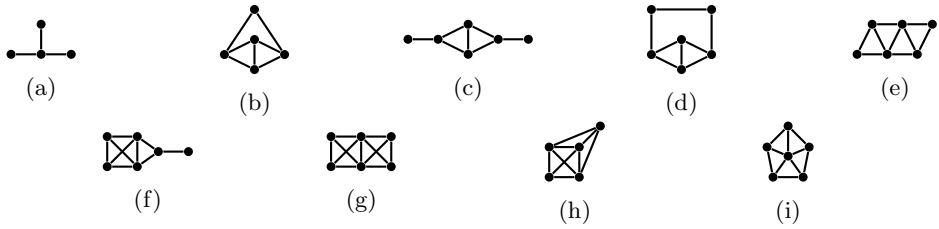

\centering
\begin{subfigure}{0.18\textwidth}
\centering
\mygraph{1[x=0,y=0] -- 2[x=1, y=0] -- 3[x=2,y=0], 2 -- 4[x=1,y=1]}
\caption{}
\end{subfigure} \begin{subfigure}{0.18\textwidth}
\centering
\mygraph{1[x=0,y=0.5] -- 2[x=1, y=0] -- 3[x=1,y=1] -- 1, 2 -- 4[x=2,y=0.5] -- 3, 1 -- 5[x=1,y=2] -- 4}
\caption{}
\end{subfigure} \begin{subfigure}{0.18\textwidth}
\centering
\mygraph{1[x=0,y=0.5] -- 2[x=1, y=0] -- 3[x=1,y=1] -- 1, 2 -- 4[x=2,y=0.5] -- 3, 1 -- 5[x=-1,y=0.5], 4 -- 6[x=3,y=0.5]}
\caption{}
\end{subfigure} \begin{subfigure}{0.18\textwidth}
\centering
\mygraph{1[x=0,y=0.5] -- 2[x=1, y=0] -- 3[x=1,y=1] -- 1, 2 -- 4[x=2,y=0.5] -- 3, 1 -- 5[x=0,y=2] -- 6[x=2,y=2] -- 4}
\caption{}
\end{subfigure} \begin{subfigure}{0.18\textwidth}
\centering
\mygraph{1[x=0,y=0] -- 2[x=1,y=0] -- 3[x=2,y=0], 4[x=0.5,y=1] -- 5[x=1.5,y=1] -- 6[x=2.5,y=1], 1 -- 4 -- 2 -- 5 -- 3 -- 6}
\caption{}
\end{subfigure} \begin{subfigure}{0.18\textwidth}
\centering
\mygraph{1[x=0,y=0] -- 2[x=1,y=0] -- 3[x=1,y=1] -- 4[x=0,y=1] -- 1 -- 3, 2 -- 4, 2 -- 5[x=1.7,y=0.5] -- 3, 5 -- 6[x=2.7,y=0.5]}
\caption{}
\end{subfigure} \begin{subfigure}{0.18\textwidth}
\centering
\mygraph{1[x=0,y=0] -- 2[x=1,y=0] -- 3[x=2,y=0] -- 4[x=2,y=1] -- 5[x=1,y=1] -- 6[x=0,y=1] -- 1, 1 -- 5 -- 3, 4 -- 2 -- 6, 2 -- 5}
\caption{}
\end{subfigure} \begin{subfigure}{0.18\textwidth}
\centering
\mygraph{1[x=0,y=0] -- 2[x=1,y=0] -- 3[x=1,y=1] -- 4[x=0,y=1] -- 1 -- 3, 2 -- 4, 2 -- 5[x=1.7,y=1.7] -- 4, 3 -- 5}
\caption{}
\end{subfigure} \begin{subfigure}{0.18\textwidth}
\centering
\mygraph{1[x=0,y=0] -- 2[x=1,y=0] -- 3[x=1.2,y=1] -- 4[x=0.5,y=1.7] -- 5[x=-0.2,y=1] -- 1, 6[x=0.5,y=0.7] -- {1,2,3,4,5}}
\caption{}
\end{subfigure}
\caption{The nine graph which Beineke's theorem says cannot be an induced subgraph of any line graph.} \label{fig:beineke_line_graphs}
\end{figure}

\begin{figure}
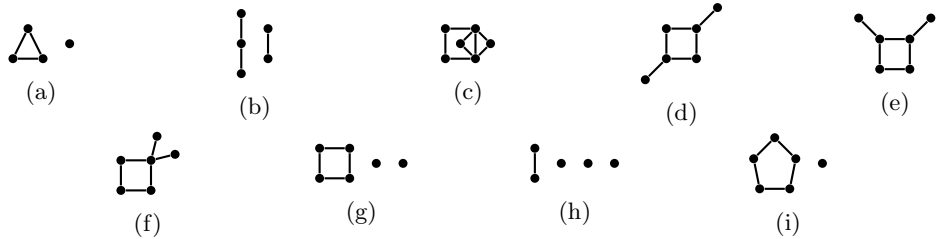

\centering
\begin{subfigure}{0.18\textwidth}
\centering
\mygraph{1[x=0,y=0] -- 2[x=1, y=0] -- 3[x=0.5,y=1] -- 1,} \ \mygraph{1}
\caption{}
\end{subfigure} \begin{subfigure}{0.18\textwidth}
\centering
\mygraph{1[x=0,y=0] -- 2[x=0,y=1] -- 3[x=0,y=2]} \ \mygraph{1[x=0,y=0] -- 2[x=0,y=1]}
\caption{}
\end{subfigure} \begin{subfigure}{0.18\textwidth}
\centering
\mygraph{1[x=0,y=0] -- 2[x=1,y=0] -- 3[x=1,y=1] -- 4[x=0,y=1] -- 1, 2 -- 5[x=0.5,y=0.5] -- 3, 2 -- 6[x=1.5,y=0.5] -- 3}
\caption{}
\end{subfigure} \begin{subfigure}{0.18\textwidth}
\centering
\mygraph{1[x=0,y=0] -- 2[x=1,y=0] -- 3[x=1,y=1] -- 4[x=0,y=1] -- 1, 1 -- 5[x=-0.7,y=-0.7], 3 -- 6[x=1.7,y=1.7]}
\caption{}
\end{subfigure} \begin{subfigure}{0.18\textwidth}
\centering
\mygraph{1[x=0,y=0] -- 2[x=1,y=0] -- 3[x=1,y=1] -- 4[x=0,y=1] -- 1, 4 -- 5[x=-0.7,y=1.7], 3 -- 6[x=1.7,y=1.7]}
\caption{}
\end{subfigure} \begin{subfigure}{0.18\textwidth}
\centering
\mygraph{1[x=0,y=0] -- 2[x=1,y=0] -- 3[x=1,y=1] -- 4[x=0,y=1] -- 1, 3 -- 5[x=1.8,y=1.2], 3 -- 6[x=1.2,y=1.8]}
\caption{}
\end{subfigure} \begin{subfigure}{0.18\textwidth}
\centering
\mygraph{1[x=0,y=0] -- 2[x=1,y=0] -- 3[x=1,y=1] -- 4[x=0,y=1] -- 1} \ \mygraph{1} \ \mygraph{1}
\caption{}
\end{subfigure} \begin{subfigure}{0.18\textwidth}
\centering
\mygraph{1[x=0,y=0] -- 2[x=0,y=1]} \ \mygraph{1} \ \mygraph{1} \ \mygraph{1}
\caption{}
\end{subfigure} \begin{subfigure}{0.18\textwidth}
\centering
\mygraph{1[x=0,y=0] -- 2[x=1,y=0] -- 3[x=1.2,y=1] -- 4[x=0.5,y=1.7] -- 5[x=-0.2,y=1] -- 1} \ \mygraph{1}
\caption{}
\end{subfigure}
\caption{Complements of the nine graphs from Figure~\ref{fig:beineke_line_graphs}; equivalently, the graphs which cannot occur as induced subgraphs of any jump graph.} \label{fig:beineke_jump_graphs}
\end{figure}

\begin{figure}
\centering
\scalebox{0.92}{
\begin{tikzpicture}
\graph [grow left=2.23cm, branch down sep=0.4cm, left anchor=west, right anchor=east, edges={out=180, in=0, max distance=1cm}] {
    empty / $\large \emptyset$ <-
    vertices / \mygraph{1[x=0,y=-2], 2[x=0,y=-1], 3[x=0,y=0], 4[x=0,y=-3.5, rectangle, inner sep=0pt, fill=none, typeset=\smash{\footnotesize\vdots}]} <- {
        K3 / \mygraph{1[x=0,y=0] -- 2[x=1,y=0] -- 3[x=0.5,y=1] -- 1} <- 
            K2 x 3 / \mygraph{1[x=0,y=0] -- 2[x=0,y=1]} \ \mygraph{1[x=0,y=0] -- 2[x=0,y=1]} \ \mygraph{1[x=0,y=0] -- 2[x=0,y=1]} <-
                K4 / \mygraph{1[x=0,y=0] -- 2[x=1,y=0] -- 3[x=0,y=1] -- 4[x=1,y=1] -- 1 -- 3, 2 -- 4} <-
                    K2 x 4 / \mygraph {1[x=0,y=0, red] --[red] 2[x=0,y=1, red]} \ \mygraph {1[x=0,y=0, red] -- 2[x=0,y=1]} \ \mygraph {1[x=0,y=0, red] -- 2[x=0,y=1]} \ \mygraph {1[x=0,y=0, red] -- 2[x=0,y=1]},
    Sn / $S_n$ -!- {
        K3 one leaf / \mygraph{1[x=0,y=0] -- 2[x=0,y=1] -- 3[x=1,y=0.5] -- 1, 3 -- 4[x=2,y=0.5]} <- 
            P4 + P2 / \mygraph{1[x=0,y=0, red] --[red] 2[x=1,y=0, red] --[red] 3[x=2,y=0, red] -- 4[x=3,y=0]} \ \mygraph{1[x=0,y=0, red] --[red] 2[x=0,y=1, red]},
        K3 two leaves / \mygraph{1[x=0,y=0] -- 2[x=0,y=1] -- 3[x=1,y=0.5] -- 1, 3 -- 4[x=2,y=0], 3 -- 5[x=2,y=1]} <-
            K3 one leaf + P2 / \mygraph{1[x=0,y=0, red] -- 2[x=0,y=1] -- 3[x=1,y=0.5, red] --[red] 1, 3 --[red] 4[x=2,y=0.5, red]} \ \mygraph{1[x=0,y=0, red] --[red] 2[x=0,y=1, red]},
        K3 n leaves / \mygraph{1[x=0,y=0, red] --[red] 2[x=0,y=1, red] -- 3[x=1,y=0.5] -- 1, 3 -- 4[x=2,y=1, red], 3 -- 5[x=2,y=0.66, red], 3 -- 6[x=2,y=0.33, red], 7[x=2,y=-1, rectangle, inner sep=0pt, fill=none, typeset=\smash{\footnotesize\vdots}]},
        P4 / \mygraph{1[x=0,y=0] -- 2[x=1,y=0] -- 3[x=2,y=0] -- 4[x=3,y=0]} <- {
            P5 / \mygraph{1[x=0,y=0] -- 2[x=1,y=0] -- 3[x=2,y=0] -- 4[x=3,y=0] -- 5[x=4,y=0]} <- {
                C4 leaf / \mygraph{1[x=0,y=0] -- 2[x=1,y=0] -- 3[x=1,y=1] -- 4[x=0,y=1] -- 1, 3 -- 5[x=1.7,y=1.7]} <- {
                    weird graph 1 / \mygraph{1[x=0,y=0, red] --[red] 2[x=1,y=0, red] -- 3[x=2,y=0] -- 4[x=3,y=0, red] --[red] 5[x=4,y=0, red], 2 --[red] 6[x=1,y=1, red]},
                    weird graph 2 / \mygraph{1[x=0,y=0] -- 2[x=1,y=0] -- 3[x=2,y=0] -- 4[x=3,y=0], 5[x=1,y=1] -- 2 -- 6[x=2,y=1] -- 3} <-
                        weird graph 3 / \mygraph{1[x=0,y=0, red] --[red] 2[x=0,y=1, red] --[red] 3[x=1,y=0.5, red] --[red] 1, 3 -- 4[x=2,y=0] -- 5[x=3,y=0, red], 3 -- 6[x=2,y=1]}},
                diag square leaf / \mygraph{1[x=0,y=0] -- 2[x=1,y=0] -- 3[x=1,y=1] -- 4[x=0,y=1] -- 1, 1 -- 3, 3 -- 5[x=1.7,y=1.7]} <-
                    weird graph 4 / \mygraph{1[x=0,y=0, red] --[red] 2[x=1,y=0, red] -- 3[x=2,y=0] -- 4[x=3,y=0], 2 --[red] 5[x=1,y=1, red]} \ \mygraph{1[x=0,y=0, red] --[red] 2[x=0,y=1, red]}},
            almost pendant / \mygraph{1[x=0,y=0] -- 2[x=1,y=0] -- 3[x=2,y=0] -- 4[x=3,y=0], 2 -- 5[x=1.5,y=1] -- 3} <-
                E shape / \mygraph{1[x=0,y=0] -- 2[x=1,y=0] -- 3[x=2,y=0] -- 4[x=3,y=0] -- 5[x=4,y=0], 3 -- 6[x=2,y=1]} <-
                    other diag square leaf / \mygraph{1[x=0,y=0, red] --[red] 2[x=1,y=0, red] -- 3[x=1,y=1] -- 4[x=0,y=1, red] --[red] 1, 2 --[red] 4, 3 -- 5[x=1.7,y=1.7, red]}},
        P3 two leaves / \mygraph{1[x=0,y=0] -- 2[x=1,y=0] -- 3[x=2,y=0] -- 4[x=3,y=0.5], 3 -- 5[x=3,y=-0.5]} <-
            K3 length 2 tail / \mygraph{1[x=0,y=0, red] --[red] 2[x=0,y=1, red] --[red] 3[x=1,y=0.5, red] --[red] 1, 3 -- 4[x=2,y=0.5] -- 5[x=3,y=0.5, red]},
        P3 n leaves / \mygraph{1[x=0,y=0.5, red] --[red] 2[x=1,y=0.5, red] -- 3[x=2,y=0.5], 3 -- 4[x=3,y=1, red], 3 -- 5[x=3,y=0.66, red], 3 -- 6[x=3,y=0.33, red], 7[x=3,y=-1, rectangle, inner sep=0pt, fill=none, typeset=\smash{\footnotesize\vdots}]},
        K2 + K2 / \mygraph{1[x=0,y=0] -- 2[x=0,y=1]} \ \mygraph{1[x=0,y=0.5] -- 2[x=1,y=0.5]} <- {
            diag square / \mygraph{1[x=0,y=0] -- 2[x=1,y=0] -- 3[x=1,y=1] -- 4[x=0,y=1] -- 1, 1 -- 3} <-
                P3 + 2 x P2 / \mygraph{1[x=0,y=0, red] --[red] 2[x=0,y=1, red] --[red] 3[x=0,y=2, red]} \ \mygraph{1[x=0,y=0, red] --[red] 2[x=0,y=1, red]} \ \mygraph{1[x=0,y=0] -- 2[x=0,y=1]},
            C4 / \mygraph{1[x=0,y=0] -- 2[x=1,y=0] -- 3[x=1,y=1] -- 4[x=0,y=1] -- 1} <- {
                2 x P3 / \mygraph{1[x=0,y=0, red] --[red] 2[x=0,y=1, red] --[red] 3[x=0,y=2, red]} \ \mygraph{1[x=0,y=0, red] --[red] 2[x=0,y=1, red] -- 3[x=0,y=2]},
                H shape / \mygraph{1[x=0,y=0] -- 2[x=1,y=0] -- 3[x=2,y=0] -- 4[x=3,y=0], 2 -- 5[x=1,y=1], 3 -- 6[x=2,y=1]} <-
                    bow tie / \mygraph{1[x=0,y=0] -- 2[x=0,y=1] -- 3[x=1,y=0.5] -- 1, 3 -- 4[x=2,y=0] -- 5[x=2,y=1] -- 3} <-
                        C4 + P2 / \mygraph{1[x=0,y=0, red] --[red] 2[x=1,y=0, red] -- 3[x=1,y=1] -- 4[x=0,y=1, red] --[red] 1} \ \mygraph{1[x=0,y=0, red] --[red] 2[x=0,y=1, red]}}},
        Sn + K2 / \mygraph{1[x=0,y=0, red] --[red] 2[x=0,y=1, red]} \ \mygraph{1[x=0,y=0.5, red] --[red] 2[x=1,y=1, red], 1 --[red] 3[x=1,y=0.5, red], 7[x=1,y=-1, rectangle, inner sep=0pt, fill=none, typeset=\smash{\footnotesize\vdots}]},
        K3 + K2 / \mygraph{1[x=0,y=0, red] --[red] 2[x=1,y=0, red] --[red] 3[x=0.5,y=1, red] --[red] 1} \ \mygraph{1[x=0,y=0, red] -- 2[x=0,y=1]}}},
        Sn <- {K3 one leaf, P4, K2 + K2, K3 + K2}
};

\node [rectangle, fill=black, opacity=0.08, fit=(K3 one leaf) (K3 two leaves) (K3 n leaves)] {};

\node [rectangle, fill=black, opacity=0.08, fit=(P4) (P3 two leaves) (P3 n leaves)] {};

\node [rectangle, fill=black, opacity=0.08, fit=(K2 + K2) (Sn + K2)] {};

\end{tikzpicture}
}
\caption{\footnotesize All graphs that dissipate. The arrows represent performing the jump graph operation. Isolated vertices are not drawn where they may appear. The $d$ value of a given graph is found by counting the number of arrows from that graph to the empty set. Whenever a graph contains a forbidden induced subgraph from Figure~\ref{fig:beineke_jump_graphs}, one such subgraph is highlighted in red.} \label{fig:dissipating_graphs}
\end{figure}
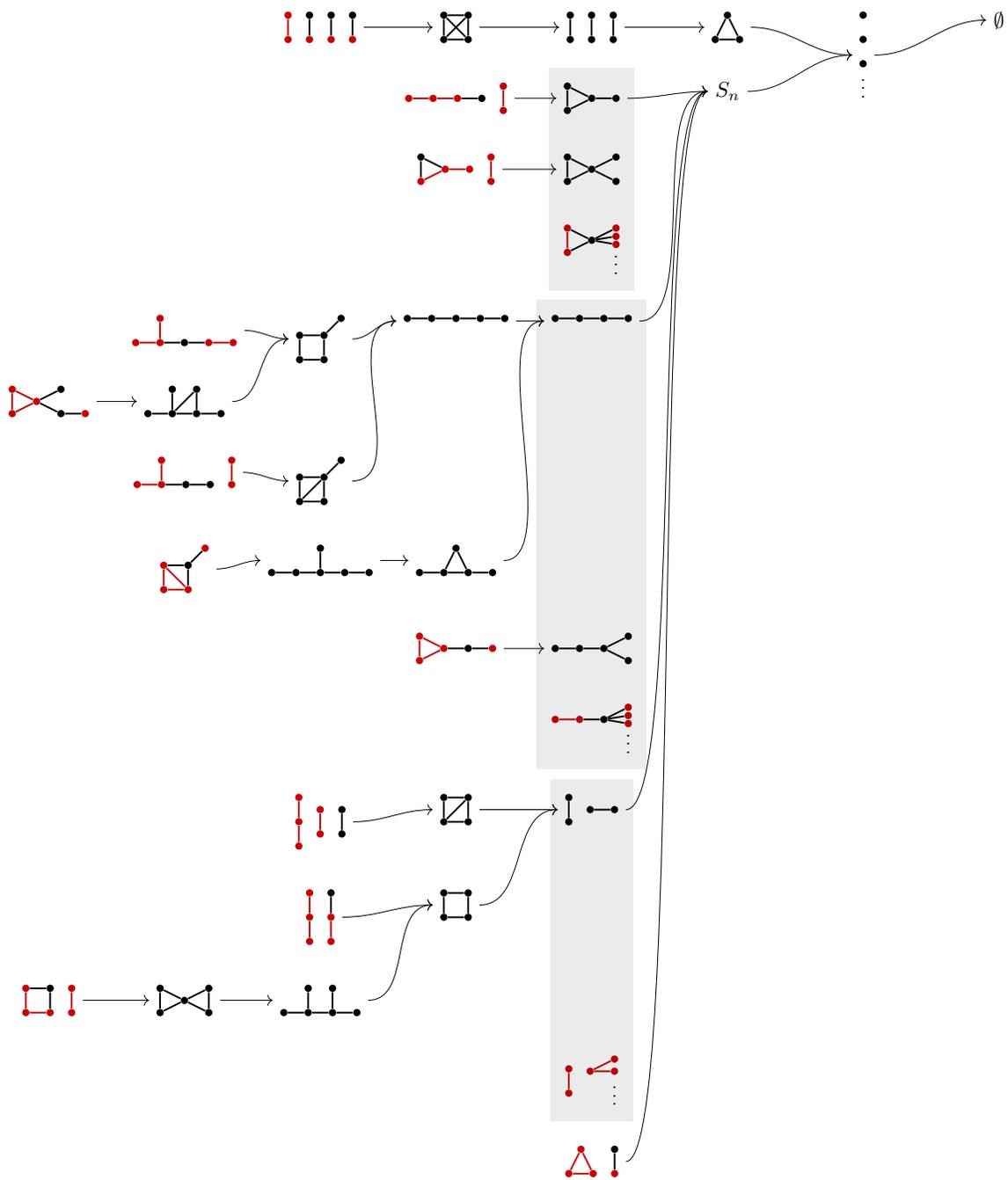

\section{Graphs with infinite $d$ value}\label{Section 4}

In this section, we turn to graphs with infinite $d$ value. In some sense, Figure \ref{fig:dissipating_graphs} already gives a characterization of $d$-infinite graphs-- if $G$ does not appear in Figure \ref{fig:dissipating_graphs}, then $d(G) = \infty$. However, there are more detailed questions we can ask about $d$-infinite graphs. In particular: what is the end behavior of the sequence $\{J^k(G)\}$? Is there a graph which satisfies $J^k(G) = G$ for some $k \geq 1$? In the following section, we will begin to answer these questions. This investigation extends beyond the results in \cite{Char} by asking about the quality of divergence for diverging jump graph sequences.

\subsection{Two Special Graphs}\label{Section 4.1}
As we have discussed earlier, the line graph $L(G)$ and the jump graph $J(G)$ are complements. This follows directly from the definition of each. Knowing this, we can apply the following result due to Aigner in ``Graphs whose complement and line graph are isomorphic'' \cite{Aig}. The graphs $C_5$ and the net graph are shown in Figure~\ref{fig:C5_and_N}.

\begin{thm}[\cite{Aig}]\label{Theorem: C5_P_first_jump}
    The only graphs satisfying
   $L(G)^c = G$ are $C_5$ and the net graph $N$.
\end{thm}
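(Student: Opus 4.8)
The equation $L(G)^c = G$ is precisely $J(G) \cong G$, so I am hunting for fixed points of the jump graph operation, and the first thing to do is extract numerical constraints. Since $V(J(G)) = E(G)$, matching vertex counts forces $|E(G)| = |V(G)| =: n$. Because $L(G)$ is disconnected whenever $G$ is (after discarding isolated vertices, which the paper does), while the complement of a disconnected graph is connected, $J(G) = L(G)^c$ is connected; as $J(G) \cong G$, the graph $G$ is connected, and a connected graph with $n$ vertices and $n$ edges is \emph{unicyclic} (exactly one cycle). Matching edge counts gives
\[
n = \binom{n}{2} - \sum_{v} \binom{d(v)}{2},
\]
equivalently $\sum_v d(v)^2 = n(n-1)$ together with $\sum_v d(v) = 2n$. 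The degree variance is then $n-5 \ge 0$, so $n \ge 5$, with equality forcing the $2$-regular graph $C_5$. The master necessary condition, of which these are only the low moments, is the \emph{degree-multiset identity}: the vertex $uv$ of $J(G)$ has degree $n+1-d(u)-d(v)$, so $J(G)\cong G$ demands that the multiset $\{d(v)\}_{v\in V(G)}$ equal $\{\,n+1-d(u)-d(v)\,\}_{uv\in E(G)}$.

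For $n \ge 6$ the plan is to exploit the structure of line graphs. Since $L(G) = \overline{G}$ is a line graph it is claw-free, so $G$ contains no induced $\overline{K_{1,3}} = K_3 \cup K_1$ (a triangle together with a vertex adjacent to none of its three vertices). As $G$ is unicyclic, I split on the length of its unique cycle. If that cycle is a triangle $T = \{x,y,z\}$, then the forbidden $K_3 \cup K_1$ forces every other vertex to be adjacent to $T$; iterating this rules out hanging trees of depth greater than one, so $G$ is exactly a triangle with pendant vertices attached directly to $x$, $y$, and $z$. I would then impose the degree-multiset identity on this family to force a single pendant at each triangle vertex, recovering the net $N$. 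If instead the unique cycle has length $\ge 4$, then $G$ is triangle-free, and the goal is to show that no pendant edges occur and that the cycle length is exactly $5$, leaving only $C_5$ (which is self-complementary and satisfies $L(C_5) = C_5$).

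The main obstacle is that the moment identities alone are genuinely insufficient: degree sequences such as $(4,2,2,2,1,1)$ at $n=6$ (a $4$-cycle with two pendants on one vertex) and $(5,3,2,1,1,1,1)$ at $n=7$ satisfy $\sum_v d(v) = 2n$ and $\sum_v d(v)^2 = n(n-1)$ yet fail the full multiset identity, so they must be excluded by hand. The real content therefore lies in imposing the multiset identity, which depends on the joint endpoint-degree structure of the edges and hence on the actual graph, not merely its degree sequence; this is why a structural dichotomy is needed rather than a purely arithmetic enumeration. I expect the triangle-free-with-pendants subcase to be the crux, and I would close it by supplementing the claw argument with a few more of Beineke's forbidden induced subgraphs (Figure~\ref{fig:beineke_jump_graphs}) applied to $G$ to eliminate pendant trees and long cycles, finishing with the direct verification that $C_5$ and the net are indeed fixed points.
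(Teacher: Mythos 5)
The paper offers no proof of this theorem to compare against: it is quoted directly from Aigner \cite{Aig}, and the surrounding text simply applies it. Judged on its own terms, your proposal is a proof \emph{plan} with a correct and rather elegant opening, but with genuine gaps exactly where the content of the theorem lies. The good part: $|V(G)|=|E(G)|=n$, connectedness and hence unicyclicity, the identity $\sum_v d(v)^2=n(n-1)$ whose variance form gives $n\ge 5$ with equality forcing $C_5$, and the degree formula $\deg_{J(G)}(uv)=n+1-d(u)-d(v)$ are all correct and would make a fine first page. (One loose end: you ``discard isolated vertices,'' but the hypothesis is an isomorphism of $G$ itself, so the possibility that $G$ has isolated vertices needs its own short argument rather than a dismissal.)

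The gaps are in the $n\ge 6$ casework, and you do not close either branch. In the triangle branch, your own arithmetic reduces to $\sum_{i<j}p_ip_j=3$ for the pendant counts $(p_1,p_2,p_3)$, which admits $(1,1,1)$ \emph{and} $(3,1,0)$; you correctly flag the surviving degree sequence $(5,3,2,1,1,1,1)$, but ``I would then impose the degree-multiset identity'' is the entire argument for the step that actually singles out $N$, and the computation that kills the competitor (its $J$-degree multiset is $\{0,1,2,2,2,3,4\}\ne\{1,1,1,1,2,3,5\}$) is never performed. The cycle-length-$\ge 4$ branch is worse: ``show that no pendant edges occur and that the cycle length is exactly $5$'' is a restatement of the theorem in that case, the promised appeal to further Beineke obstructions is never instantiated, and an infinite family of unicyclic triangle-free candidates --- including the $(4,2,2,2,1,1)$ example you yourself exhibit --- remains unexcluded. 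Until both branches are actually carried out, this is a promising outline, not a proof.
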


\noindent Applying this to iterated jump graphs, we have the following consequence.
\begin{cor}
    If $G$ is a graph, then $J^k(G) = G$ for all $k \geq 0$ if and only if $G$ is $C_5$ or $N$.
\end{cor}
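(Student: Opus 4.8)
The plan is to deduce the corollary directly from Theorem~\ref{Theorem: C5_P_first_jump} by translating between the line-graph language of Aigner's result and the jump-graph language of iterated sequences. Since $J(G)$ is by definition the complement of $L(G)$, the equation $J(G) = G$ is literally the same as $L(G)^c = G$. So the first step is to observe that $J(G) = G$ holds if and only if $G \in \{C_5, N\}$, which is exactly the content of Theorem~\ref{Theorem: C5_P_first_jump} after this cosmetic rewriting.

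For the forward direction of the corollary, I would argue by contraposition on the $k=1$ case. If $J^k(G) = G$ for all $k \geq 0$, then in particular $J^1(G) = J(G) = G$, so $G$ satisfies $L(G)^c = G$, and Theorem~\ref{Theorem: C5_P_first_jump} forces $G$ to be $C_5$ or $N$. The reverse direction requires checking that $C_5$ and $N$ are genuine fixed points of the whole sequence, not merely fixed by a single application of $J$. Here the key step is the elementary induction: if $J(G) \cong G$, then applying $J$ repeatedly gives $J^k(G) \cong G$ for every $k$. Formally, $J^0(G) = G$ by definition, and assuming $J^k(G) \cong G$ we get $J^{k+1}(G) = J(J^k(G)) \cong J(G) \cong G$, where the middle step uses that the jump graph operation respects isomorphism. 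So I would first record, or simply invoke as obvious, that $G_1 \cong G_2$ implies $J(G_1) \cong J(G_2)$, and then run the one-line induction.

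I do not anticipate a genuine obstacle here, since the corollary is essentially a restatement of Aigner's theorem combined with a trivial induction. The one point worth stating carefully is the distinction between $J(G) = G$ and $J^k(G) = G$ for all $k$: Theorem~\ref{Theorem: C5_P_first_jump} gives the single-step condition, and one must confirm that being a fixed point of $J$ automatically makes $G$ a fixed point of all iterates. Because $C_5$ and $N$ each satisfy $J(G) \cong G$ (this is precisely what $L(G)^c = G$ asserts), the induction closes immediately and no case analysis or structural argument about these two graphs is needed beyond citing the theorem.
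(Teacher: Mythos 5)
Your proposal is correct and matches the paper's (implicit) argument exactly: the paper presents this corollary as an immediate consequence of Theorem~\ref{Theorem: C5_P_first_jump}, obtained by rewriting $L(G)^c = G$ as $J(G) = G$ and noting that a fixed point of $J$ is a fixed point of every iterate. The only content beyond Aigner's theorem is the one-line induction you describe, so there is nothing to add.
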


\begin{rem}
This is a rephrasing of Theorem 4 in \cite{Char}.
\end{rem}

\begin{figure}[H]
  \centering
  \begin{minipage}[b]{0.3\textwidth}\centering
    \begin{tikzpicture}[line cap=round,line join=round,>=triangle 45,x=0.8cm,y=0.8cm]
    \draw [line width=1pt] (0,1)-- (-0.951,0.309);
    \draw [line width=1pt] (-0.951,0.309)-- (-0.588,-0.809);
    \draw [line width=1pt] (-0.588,-0.809)-- (0.588,-0.809);
    \draw [line width=1pt] (0.588,-0.809)-- (0.951,0.309);
    \draw [line width=1pt] (0.951,0.309)-- (0,1);
    \begin{scriptsize}
    \draw [fill=black] (0,1) circle (2pt);
    \draw [fill=black] (-0.951,0.309) circle (2pt);
    \draw [fill=black] (-0.588,-0.809) circle (2pt);
    \draw [fill=black] (0.588,-0.809) circle (2pt);
    \draw [fill=black] (0.951,0.309) circle (2pt);
    \end{scriptsize}
    \end{tikzpicture}
    \caption*{\large $C_5$}
  \end{minipage}
  \begin{minipage}[b]{0.3\textwidth}\centering
    \begin{tikzpicture}[line cap=round,line join=round,>=triangle 45,x=0.8cm,y=0.8cm]
    \draw [line width=1pt] (4,-1)-- (4.866025403784439,0.5);
    \draw [line width=1pt] (4.866025403784439,0.5)-- (3.134,0.5);
    \draw [line width=1pt] (3.134,0.5)-- (4,-1);
    \draw [line width=1pt] (2.5934,1.1)-- (3.134,0.5);
    \draw [line width=1pt] (5.40809,1.1)-- (4.866025403784439,0.5);
    \draw [line width=1pt] (4,-1)-- (4.00763,-1.8);
    \begin{scriptsize}
    \draw [fill=black] (4,-1) circle (2pt);
    \draw [fill=black] (4.866025403784439,0.5) circle (2pt);
    \draw [fill=black] (3.134,0.5) circle (2pt);
    \draw [fill=black] (2.5934, 1.1) circle (2pt);
    \draw [fill=black] (5.40809,1.1) circle (2pt);
    \draw [fill=black] (4.00763,-1.8) circle (2pt);
    \end{scriptsize}
    \end{tikzpicture}
    \caption*{\large $N$}
  \end{minipage}
  \caption{$C_5$ and the net graph $N$. These are the only graphs such that $G = J(G)$.}\label{fig:C5_and_N}
\end{figure}

In particular, we observe that $C_5$ and $N$ are both $d$-infinite. If $G$ has $C_5$ as a subgraph, then for every $k \geq 1$ we know $J^k(C_5) = C_5$ is a subgraph of $J^k(G)$ by Lemma~\ref{Lemma: J(H)_subgraph_of_J(G)} and the analogous statement is true for the net graph $N$. Given this, we can talk about \emph{accumulation}.

\begin{defn}
A graph $G$ \emph{accumulates} $C_5$ (resp.~$N$) if there is some $k$ such that $C_5 \subseteq J^k(G)$ (resp.~$N \subseteq J^k(G)$).
\end{defn}

Like with any $d$-infinite graph, if $G$ has $C_5$ or $N$ as a snipped subgraph, then it will have infinite $d$ value by Proposition~\ref{Prop: subgraph-d-value}. The surprising result is that the converse turns out to be true. If $d(G) = \infty$, then $G$ will accumulate $C_5$ or $N$. In the following two subsections, we will focus on proving this fact which will be completed in Theorem~\ref{Theorem: accumulating}.

\subsection{Some Useful Graphs}\label{Section 4.2}

The proofs in Section~\ref{Section 4.3} involve a decent amount of casework. In order to make this more manageable, we will introduce some useful graphs in addition to $C_5$ and $N$. Each of the following graphs has an infinite $d$ value and accumulates $C_5$. While performing casework in the next section, we can look for $C_5$, $N$, or one of these other useful graphs as a snipped subgraph. If a graph $G$ has one of these, then we know $d(G) = \infty$ and $G$ accumulates $C_5$ or $N$.

By Corollary~\ref{Corollary: snipped_d=infinity} and Lemma~\ref{Lemma: H_snipped_of_G, J(H)_in_J(G)}, if $H$ is a snipped subgraph of $G$ and $d(H) = \infty$, then $d(G) = \infty$ and $J(H) \subseteq J(G)$. We use this fact to show the following graphs have infinite $d$ value and accumulate $C_5$.

First, we have $\mathbf{K_{2,3}}$ in Figure~\ref{fig:K_2,3}. The jump graph of $K_{2,3}$ has $C_5$ as a snipped subgraph. We conclude $d(K_{2,3}) = \infty$ and $K_{2,3}$ accumulates $C_5$.

\begin{figure}[H]
  \centering
  \begin{minipage}[b]{0.3\textwidth}\centering
 \begin{tikzpicture}[line cap=round,line join=round,>=triangle 45,x=0.5cm,y=0.5cm]
    \draw [line width=1pt] (8,1)-- (10,0);
    \draw [line width=1pt] (8,0)-- (10,0);
    \draw [line width=1pt] (8,-1)-- (10,0);
    \draw [line width=1pt] (8,1)-- (6,0);
    \draw [line width=1pt] (6,0)-- (8,0);
    \draw [line width=1pt] (6,0)-- (8,-1);
    \begin{scriptsize}
    \draw [fill=black] (8,1) circle (1.8pt);
    \draw [fill=black] (10,0) circle (1.8pt);
    \draw [fill=black] (8,0) circle (1.8pt);
    \draw [fill=black] (8,-1) circle (1.8pt);
    \draw [fill=black] (6,0) circle (1.8pt);
\end{scriptsize}
\end{tikzpicture}
    \caption*{\large $K_{2,3}$}
  \end{minipage}
  \begin{minipage}[b]{0.3\textwidth}\centering
    \begin{tikzpicture}[line cap=round,line join=round,>=triangle 45,x=0.8cm,y=0.8cm]
    \draw [line width=1pt] (0,1)-- (-0.8660254037844385,0.5);
    \draw [line width=1pt] (-0.8660254037844385,0.5)-- (-0.8660254037844388,-0.5);
    \draw [line width=1pt] (-0.8660254037844388,-0.5)-- (0,-1);
    \draw [line width=1pt] (0,-1)-- (0.8660254037844384,-0.5);
    \draw [line width=1pt] (0.8660254037844384,-0.5)-- (0.8660254037844386,0.5);
    \draw [line width=1pt] (0.8660254037844386,0.5)-- (0,1);
    \begin{scriptsize}
    \draw [fill=black] (0,1) circle (1.8pt);
    \draw [fill=black] (-0.8660254037844385,0.5) circle (1.8pt);
    \draw [fill=black] (-0.8660254037844388,-0.5) circle (2pt);
    \draw [fill=black] (0,-1) circle (2pt);
    \draw [fill=black] (0.8660254037844384,-0.5) circle (2pt);
    \draw [fill=black] (0.8660254037844386,0.5) circle (2pt);
    \end{scriptsize}
    \end{tikzpicture}
    \caption*{\large $J(K_{2,3})$}
  \end{minipage}
  \caption{The graph $K_{2,3}$ and its jump graph.}\label{fig:K_2,3}
\end{figure}

\noindent Next, we have the \textbf{Bug Graph} in Figure~\ref{fig:Bug}. The jump graph of the Bug Graph has $K_{2, 3}$ as a snipped subgraph. We conclude the Bug Graph has $d(G) = \infty$ and accumulates $C_5$.

\begin{figure}[H]
  \centering
  \begin{minipage}[b]{0.3\textwidth}\centering
    \begin{tikzpicture}[line cap=round,line join=round,>=triangle 45,x=0.5cm,y=0.5cm]
    \draw [line width=1pt] (-1,2)-- (1,2);
    \draw [line width=1pt] (1,2)-- (1,0);
    \draw [line width=1pt] (1,0)-- (-1,0);
    \draw [line width=1pt] (-1,0)-- (-1,2);
    \draw [line width=1pt] (-1,2)-- (-1.46,3.42);
    \draw [line width=1pt] (-1,2)-- (-2.38,2.54);
    \begin{scriptsize}
    \draw [fill=black] (-1,2) circle (1.8pt);
    \draw [fill=black] (1,2) circle (1.8pt);
    \draw [fill=black] (1,0) circle (1.8pt);
    \draw [fill=black] (-1,0) circle (1.8pt);
    \draw [fill=black] (-1.46,3.42) circle (1.8pt);
    \draw [fill=black] (-2.38,2.54) circle (1.8pt);
    \draw [color=white,fill=white] (2.5,0) circle (1pt);
    \end{scriptsize}
    \end{tikzpicture}
    \caption*{\large $G$}
  \end{minipage}
  \begin{minipage}[b]{0.3\textwidth}\centering
    \begin{tikzpicture}[line cap=round,line join=round,>=triangle 45,x=0.5cm,y=0.5cm]
    \draw [line width=1pt] (5,2)-- (7,2);
    \draw [line width=1pt] (7,2)-- (7,0);
    \draw [line width=1pt] (7,0)-- (5,0);
    \draw [line width=1pt] (5,0)-- (5,2);
    \draw [line width=1pt] (7,0)-- (8.5,0);
    \draw [line width=1pt] (5,2)-- (3.62,2.54);
    \begin{scriptsize}
    \draw [fill=black] (5,2) circle (1.8pt);
    \draw [fill=black] (7,2) circle (1.8pt);
    \draw [fill=black] (7,0) circle (1.8pt);
    \draw [fill=black] (5,0) circle (1.8pt);
    \draw [fill=black] (8.5,0) circle (1.8pt);
    \draw [fill=black] (3.62,2.54) circle (1.8pt);
    \end{scriptsize}
    \end{tikzpicture}
    \caption*{\large $J(G)$}
  \end{minipage}
  \caption{The Bug Graph and its jump graph.}\label{fig:Bug}
\end{figure}

Next, we have the \textbf{Stickman Graph} in Figure~\ref{fig:Stickman}. The jump graph of the Stickman has $K_{2,3}$ as a subgraph. We conclude the Stickman has $d(G) = \infty$ and accumulates $C_5$.

\begin{figure}[H]
  \centering
  \begin{minipage}[b]{0.3\textwidth}\centering
    \begin{tikzpicture}[line cap=round,line join=round,>=triangle 45,x=0.55cm,y=0.55cm]
    \draw [line width=1pt] (3,5)-- (3,4);
    \draw [line width=1pt] (3,4)-- (2,4);
    \draw [line width=1pt] (3,4)-- (4,4);
    \draw [line width=1pt] (3,4)-- (3,3);
    \draw [line width=1pt] (3,3)-- (2.24,2.22);
    \draw [line width=1pt] (3,3)-- (3.76,2.22);
    \begin{scriptsize}
    \draw [fill=black] (3,5) circle (1.8pt);
    \draw [fill=black] (3,4) circle (1.8pt);
    \draw [fill=black] (2,4) circle (1.8pt);
    \draw [fill=black] (4,4) circle (1.8pt);
    \draw [fill=black] (3,3) circle (1.8pt);
    \draw [fill=black] (2.24,2.22) circle (1.8pt);
    \draw [fill=black] (3.76,2.22) circle (1.8pt);
    \end{scriptsize}
    \end{tikzpicture}
    \caption*{\large $G$}
  \end{minipage}
  \begin{minipage}[b]{0.3\textwidth}\centering
    \begin{tikzpicture}[line cap=round,line join=round,>=triangle 45,x=0.5cm,y=0.5cm]
    \draw [line width=1pt] (8,1)-- (10,0);
    \draw [line width=1pt] (8,0)-- (10,0);
    \draw [line width=1pt] (8,-1)-- (10,0);
    \draw [line width=1pt] (8,1)-- (6,0);
    \draw [line width=1pt] (6,0)-- (8,0);
    \draw [line width=1pt] (6,0)-- (8,-1);
    \begin{scriptsize}
    \draw [fill=black] (8,1) circle (1.8pt);
    \draw [fill=black] (10,0) circle (1.8pt);
    \draw [fill=black] (8,0) circle (1.8pt);
    \draw [fill=black] (8,-1) circle (1.8pt);
    \draw [fill=black] (6,0) circle (1.8pt);
    \draw [fill=black] (6,-1) circle (1.8pt);
    \end{scriptsize}
    \end{tikzpicture}
    \caption*{\large $J(G)$}
  \end{minipage}
  \caption{The Stickman Graph and its jump graph.}\label{fig:Stickman}
\end{figure}

Lastly, we have the \textbf{Pendulum Graph} in Figure~\ref{fig:Pendulum}. The jump graph of the Pendulum has $K_{2,3}$ as a snipped subgraph. We conclude the Pendulum has $d(G) = \infty$ and accumulates $C_5$.

\begin{figure}[H]
  \centering
  \begin{minipage}[b]{0.3\textwidth}\centering
    \begin{tikzpicture}[line cap=round,line join=round,>=triangle 45,x=0.55cm,y=0.55cm]
    \draw [line width=1pt] (3,4)-- (2,5);
    \draw [line width=1pt] (3,4)-- (4,5);
    \draw [line width=1pt] (3,4)-- (3,3);
    \draw [line width=1pt] (3,3)-- (2.24,2.22);
    \draw [line width=1pt] (3,3)-- (3.76,2.22);
    \draw [line width=1pt] (2.24,2.22) -- (3.76,2.22);
    \begin{scriptsize}
    \draw [fill=black] (2,5) circle (1.8pt);
    \draw [fill=black] (4,5) circle (1.8pt);
    \draw [fill=black] (3,4) circle (1.8pt);
    \draw [fill=black] (3,3) circle (1.8pt);
    \draw [fill=black] (2.24,2.22) circle (1.8pt);
    \draw [fill=black] (3.76,2.22) circle (1.8pt);
    \end{scriptsize}
    \end{tikzpicture}
    \caption*{\large $G$}
  \end{minipage}
  \begin{minipage}[b]{0.3\textwidth}\centering
    \begin{tikzpicture}[line cap=round,line join=round,>=triangle 45,x=0.5cm,y=0.5cm]
    \draw [line width=1pt] (8,1)-- (10,0);
    \draw [line width=1pt] (8,0)-- (10,0);
    \draw [line width=1pt] (8,-1)-- (10,0);
    \draw [line width=1pt] (8,1)-- (6,0);
    \draw [line width=1pt] (6,0)-- (8,0);
    \draw [line width=1pt] (6,0)-- (8,-1);
    \draw [line width=1pt] (6,-1)-- (8,-1);
    \begin{scriptsize}
    \draw [fill=black] (8,1) circle (1.8pt);
    \draw [fill=black] (10,0) circle (1.8pt);
    \draw [fill=black] (8,0) circle (1.8pt);
    \draw [fill=black] (8,-1) circle (1.8pt);
    \draw [fill=black] (6,0) circle (1.8pt);
    \draw [fill=black] (6,-1) circle (1.8pt);
    \end{scriptsize}
    \end{tikzpicture}
    \caption*{\large $J(G)$}
  \end{minipage}
  \caption{The Pendulum Graph and its jump graph.}\label{fig:Pendulum}
\end{figure}

\begin{rem}
All of the names for the above graphs (except for $K_{2,3}$) are original. Some of the graphs have been given different names in other contexts, but we decided to use our own because of their memorable---and entertaining---nature. Of particular note is that the Bug Graph is sometimes called $R$ because from a certain orientation it looks like the letter $R$.
\end{rem}

\subsection{Diameter and $d$-infinite graphs}\label{Section 4.3}

In order to show that every graph with infinite $d$ value accumulates $C_5$ or $N$, we will split all graphs into cases based on diameter. The \emph{diameter} of a graph $G$ is the length of the longest shortest path between any pair of vertices of $G$. The diameter of a disconnected graph is defined to be infinite.
\begin{lem}\label{Lemma: diam>=5}
If a connected graph $G$ has a diameter of 5 or more then $G$ is $d$-infinite and accumulates $C_5$.
\end{lem}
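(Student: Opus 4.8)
The plan is to reduce the whole statement to a single universal witness: the path $P_6$ on six vertices. The diameter hypothesis hands us a geodesic of length at least $5$, and I would first observe that any such shortest path contains $P_6$ as a subgraph. Concretely, if $u,v$ are vertices with $d(u,v)\geq 5$, then a shortest $u$--$v$ path has at least six vertices, and its first six vertices together with the five consecutive path edges form a subgraph of $G$ isomorphic to $P_6$. (We do not even need that this copy is induced; a plain subgraph is all the later lemmas require.) Thus it suffices to prove that $P_6$ itself is $d$-infinite and accumulates $C_5$, and then to transport this conclusion along the subgraph relation.

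The heart of the argument is a direct computation of $J(P_6)$. Label the edges of $P_6$ as $e_1,\dots,e_5$ in order along the path. Two of these are non-incident exactly when their indices differ by at least $2$, so $J(P_6)$ is the graph on $\{e_1,\dots,e_5\}$ with edge set $\{e_1e_3,\,e_1e_4,\,e_1e_5,\,e_2e_4,\,e_2e_5,\,e_3e_5\}$---that is, the complement of the line graph $L(P_6)\cong P_5$. I would then exhibit the cycle $e_1\,e_3\,e_5\,e_2\,e_4\,e_1$ and check that each of its five edges is present, which shows that $C_5$ is a (spanning) subgraph of $J(P_6)$. This single verification is really the only piece of genuine content in the proof.

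With $C_5\subseteq J(P_6)$ in hand, everything follows from the preliminary results. Since $P_6$ is a subgraph of $G$, Lemma~\ref{Lemma: J(H)_subgraph_of_J(G)} gives $J(P_6)\subseteq J(G)$, whence $C_5\subseteq J(G)$; this is exactly the statement that $G$ accumulates $C_5$ (already at $k=1$). For the dissipation claim, recall from Theorem~\ref{Theorem: C5_P_first_jump} that $C_5$ is a fixed point of $J$, so $d(C_5)=\infty$; as $C_5$ is a subgraph (hence a snipped subgraph) of $J(G)$, Corollary~\ref{Corollary: snipped_d=infinity} yields $d(J(G))=\infty$, and therefore $d(G)=\infty$ as well.

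I expect no serious obstacle: once the reduction to $P_6$ is made, the argument is a finite check followed by citations. The only points needing care are (i) confirming that a diameter-$5$ geodesic really produces a $P_6$ subgraph, and (ii) not mis-identifying $J(P_6)$---it is tempting to expect $C_5$ itself, but in fact $J(P_6)$ is $C_5$ with one extra chord, so I would state explicitly that $C_5$ appears as a \emph{subgraph} rather than as the whole graph.
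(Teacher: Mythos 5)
Your proof is correct, but it takes a somewhat different route from the paper's. The paper's argument is two sentences: a path of length $5$ in $G$ gives $C_5$ as a \emph{snipped} subgraph of $G$ (glue the two endpoints of the $P_6$ together), after which Proposition~\ref{Prop: subgraph-d-value} gives $d(G)\geq d(C_5)=\infty$ and Lemma~\ref{Lemma: H_snipped_of_G, J(H)_in_J(G)} gives $C_5=J(C_5)\subseteq J(G)$. You instead bypass the snipped-subgraph machinery entirely: you compute $J(P_6)$ explicitly, observe it is $C_5$ plus the chord $e_1e_5$ (your verification of the $5$-cycle $e_1e_3e_5e_2e_4e_1$ is right), and then push $C_5\subseteq J(P_6)\subseteq J(G)$ through the plain subgraph Lemma~\ref{Lemma: J(H)_subgraph_of_J(G)}, finishing with Corollary~\ref{Corollary: snipped_d=infinity} applied to $J(G)$. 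Both arguments hinge on the same underlying fact---five consecutive path edges pairwise ``jump'' in exactly the pattern of a $5$-cycle---but your version is more self-contained and computational, needing only the elementary subgraph lemma, whereas the paper's is shorter because it reuses the quotient/snipped toolkit that it needs anyway for the harder diameter cases. Your cautionary remark that $J(P_6)$ is not literally $C_5$ is well taken and is exactly the detail most likely to be fumbled in a direct computation.
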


\begin{proof}
If $G$ has a diameter of at least 5 then there is some path in $G$ of length 5. Thus, we have $C_5$ as a snipped subgraph so $d(G) = \infty$ by Proposition~\ref{Prop: subgraph-d-value} and $C_5 \subseteq J(G)$ by Lemma~\ref{Lemma: H_snipped_of_G, J(H)_in_J(G)}. 
\end{proof}

\begin{lem}\label{Lemma: diam=4}
Suppose $G$ has a diameter of $4$. Then the following are equivalent:
\begin{enumerate}[label=(\roman*)]
    \item $G$ is $d$-infinite,
    \item $G$ accumulates $C_5$ or $N$, and 
    \item $G$ has $6$ or more edges.
\end{enumerate}
\end{lem}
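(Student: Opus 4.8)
The plan is to prove the three conditions equivalent by establishing the cycle $(ii) \Rightarrow (i) \Rightarrow (iii) \Rightarrow (ii)$, since the first two implications are essentially bookkeeping and all of the real content sits in the last one. For $(ii) \Rightarrow (i)$, suppose $C_5 \subseteq J^m(G)$ for some $m$ (the net graph $N$ is handled identically). Because $J(C_5) = C_5$, applying Lemma~\ref{Lemma: J(H)_subgraph_of_J(G)} repeatedly gives $C_5 \subseteq J^{m+k}(G)$ for every $k \ge 0$, so $J^k(G)$ is never empty and $d(G) = \infty$. This same iteration is the mechanism I will reuse at the end.

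For $(i) \Rightarrow (iii)$ I would argue the contrapositive: a diameter-$4$ graph with at most $5$ edges is $d$-finite. Such a graph contains a shortest path on five vertices, hence at least $4$ edges, so only the cases of $4$ or $5$ edges remain and they form a tiny finite list. With exactly $4$ edges the graph is $P_5$ itself. With exactly $5$ edges the surplus edge cannot be a chord of the diametral path (a shortest path is induced), and it cannot join two new vertices (that would disconnect $G$), so it must attach a single new vertex to the path; attaching at an endpoint produces diameter $5$, leaving only $P_5$ with one pendant at an interior vertex. Each of these finitely many graphs appears in Figure~\ref{fig:dissipating_graphs} (equivalently, one simply computes the jump sequence and watches it reach $\emptyset$), so each is $d$-finite.

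The substantive implication is $(iii) \Rightarrow (ii)$. I would fix a diametral path $v_0 v_1 v_2 v_3 v_4$; since it realizes the distance $4$ it is induced, so every edge beyond these four has an endpoint off the path. Having at least $6$ edges, $G$ carries at least two such surplus edges, and the diameter-$4$ bound sharply restricts where they can sit: a vertex hanging off an endpoint $v_0$ or $v_4$ by a single edge would lie at distance $5$ from the opposite endpoint, so surplus structure is forced toward the interior vertices $v_1, v_2, v_3$, or else must be accompanied by a second edge supplying a shortcut. I would then do casework on the attachment pattern of the surplus edges — one interior vertex receiving two pendants, two distinct interior vertices each receiving a pendant, a single external vertex adjacent to two path vertices (producing a triangle or a $4$-cycle with the path), an endpoint attachment together with its forced companion edge, and so on — and in each case exhibit one of $C_5$, $N$, $K_{2,3}$, the Bug, the Stickman, or the Pendulum as a \emph{snipped} subgraph. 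Each of these is $d$-infinite and accumulates $C_5$ (Section~\ref{Section 4.2}), so Lemma~\ref{Lemma: H_snipped_of_G, J(H)_in_J(G)} together with the iteration from $(ii)\Rightarrow(i)$ lifts that accumulation up to $G$, yielding $(ii)$.

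The main obstacle is making the casework in $(iii) \Rightarrow (ii)$ visibly exhaustive. I expect most of the effort to go into classifying the ways two or more surplus edges can attach to the fixed induced $P_5$ under the distance-$4$ constraint, and verifying that every admissible configuration contains one of the catalogued accumulating graphs as a snipped subgraph. The endpoint-attachment subcases, where a shortcut edge must be present to keep the diameter from jumping to $5$, will be the fiddliest to enumerate, but these are also precisely the configurations that tend to snip down to $K_{2,3}$ or the Bug, so I anticipate they will close cleanly once the bookkeeping is set up.
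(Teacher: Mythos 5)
Your plan follows essentially the same route as the paper's proof: fix an induced diametral path, check by hand that the (two) diameter-$4$ graphs with five edges are $d$-finite, and for six or more edges run a finite case analysis on how two surplus edges attach to the path, exhibiting $C_5$, $K_{2,3}$, the Bug, or $N$ as a snipped subgraph in each configuration (the paper first dispatches any surplus edge incident to $v_0$ or $v_4$, or disjoint from the path, via a length-$5$ path that snips to $C_5$, and then reduces the rest to the six configurations of Figure~\ref{fig:diam=4,6edges}). The only substantive differences are that you leave that enumeration as a plan rather than carrying it out, and that your treatment of endpoint attachments as needing a ``companion shortcut edge'' is an unnecessary detour --- an edge at $v_0$ or $v_4$ already gives a five-edge path as a subgraph, hence $C_5$ as a snipped subgraph, with no diameter bookkeeping required.
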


\begin{proof}
Let $P$ be a longest shortest path in $G$ with a length of $4$ and let $P$ have endpoints $A$ and $B$.

If $G$ only has $4$ edges, then $G = P$. In this case $d(G)$ is finite (see Figure \ref{fig:dissipating_graphs}). If $G$ has $5$ edges, then it must be one of the graphs in Figure~\ref{fig:diam4-5edges}, up to isomorphism, because of the diameter constraint. Both these graphs have finite $d$ value, as they appear in Figure \ref{fig:dissipating_graphs}.

\begin{figure}
  \centering
  \begin{minipage}[b]{0.3\textwidth}\centering
    \begin{tikzpicture}[line cap=round,line join=round,>=triangle 45,x=0.6cm,y=0.6cm]
    \draw [line width=1pt] (-2,0)-- (0,0);
    \draw [line width=1pt] (0,0)-- (0.74,1.5);
    \draw [line width=1pt] (0.74,1.5)-- (2.58,1.3);
    \draw [line width=1pt] (2.58,1.3)-- (3.14,2.84);
    \draw [line width=1pt] (2.58,1.3)-- (1.82,2.56);
    \begin{scriptsize}
    \draw [fill=blue] (-2,0) circle (1.8pt);
    \draw[color=blue] (-2.4,0.29) node {A};
    \draw [fill=black] (0,0) circle (1.8pt);
    \draw [fill=black] (0.74,1.5) circle (1.8pt);
    \draw [fill=black] (2.58,1.3) circle (1.8pt);
    \draw [fill=red] (3.14,2.84) circle (1.8pt);
    \draw[color=red] (3.3,3.27) node {B};
    \draw [fill=black] (1.82,2.56) circle (1.8pt);
    \end{scriptsize}
    \end{tikzpicture}
  \end{minipage}
  \begin{minipage}[b]{0.3\textwidth}\centering
    \begin{tikzpicture}[line cap=round,line join=round,>=triangle 45,x=0.6cm,y=0.6cm]
    \draw [line width=1pt] (4,0)-- (6,0);
    \draw [line width=1pt] (6,0)-- (6.74,1.5);
    \draw [line width=1pt] (6.74,1.5)-- (8.58,1.3);
    \draw [line width=1pt] (8.58,1.3)-- (9.14,2.84);
    \draw [line width=1pt] (6.74,1.5)-- (7.56,0.14);
    \begin{scriptsize}
    \draw [fill=blue] (4,0) circle (1.8pt);
    \draw[color=blue] (3.6,0.29) node {A};
    \draw [fill=black] (6,0) circle (1.8pt);
    \draw [fill=black] (6.74,1.5) circle (1.8pt);
    \draw [fill=black] (8.58,1.3) circle (1.8pt);
    \draw [fill=red] (9.14,2.84) circle (1.8pt);
    \draw[color=red] (9.3,3.27) node {B};
    \draw [fill=black] (7.56,0.14) circle (1.8pt);
    \end{scriptsize}
    \end{tikzpicture}
  \end{minipage}
  \caption{Graphs with five edges and diameter 4.} \label{fig:diam4-5edges}
\end{figure}
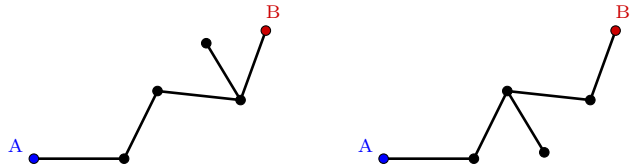

Now, let us consider when $G$ has $6$ or more edges. Then $G$ has a connected subgraph consisting of $P$ and two other edges. First, suppose that one of these edges has either $A$ or $B$ as an endpoint. This edge cannot have another endpoint in $P$. If it did, then we would contradict the assumption that $P$ is a shortest path from $A$ to $B$. Therefore, this additional edge and $P$ form a path of length 5 in $G$, so $C_5$ is a snipped subgraph of $G$. This implies $d(G) = \infty$ and $G$ accumulates $C_5$.

Thus, we turn our attention to the possibilities where neither additional edge is adjacent to $A$ or $B$. There are two cases: only one of two edges is adjacent to a vertex in $P$, or both edges are. 

In this first case, we immediately have $C_5$ as a snipped subgraph given by the path $P$ and the disjoint edge. One subgraph of this case is shown in Figure~\ref{fig:diam4_snipped_C5}, with the snipped $C_5$ in bold. So in this case, $G$ has an infinite $d$ value and accumulates $C_5$.

\begin{figure}
    \centering
    \begin{tikzpicture}[line cap=round,line join=round,>=triangle 45,x=0.8cm,y=0.8cm]
    \draw [line width=1.8pt] (4,0)-- (6,0);
    \draw [line width=1.8pt] (6,0)-- (6.74,1.5);
    \draw [line width=1.8pt] (6.74,1.5)-- (8.58,1.3);
    \draw [line width=1pt] (6.74,1.5)-- (7.56,0.14);
    \draw [line width=1.8pt] (9.14,2.84)-- (8.58,1.3);
    \draw [line width=1.8pt] (7.56,0.14)-- (8.3,-1.04);
    \begin{scriptsize}
    \draw [fill=blue] (4,0) circle (1.8pt);
    \draw[color=blue] (3.6,0.29) node {$A$};
    \draw [fill=black] (6,0) circle (1.8pt);
    \draw [fill=black] (6.74,1.5) circle (1.8pt);
    \draw [fill=black] (8.58,1.3) circle (1.8pt);
    \draw [fill=red] (9.14,2.84) circle (1.8pt);
    \draw[color=red] (9.3,3.27) node {B};
    \draw [fill=black] (7.56,0.14) circle (1.8pt);
    \draw [fill=black] (8.3,-1.04) circle (1.8pt);
    \end{scriptsize}
    \end{tikzpicture}
    \caption{If $G$ has two additional edges connected to $P$ and only one is adjacent to a vertex in $P$, then $G$ will have $C_5$ as a snipped subgraph.}
    \label{fig:diam4_snipped_C5}
\end{figure}
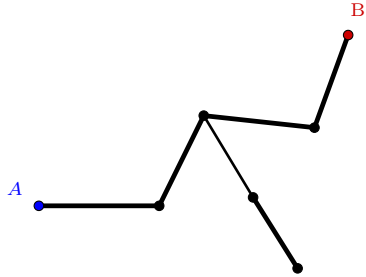

Now, we address the second case: suppose that each additional edge has an endpoint which lies in $P$. The possibilities are restricted by the assumption that $P$ is a shortest path between $A$ and $B$. Adding the edges cannot create an alternative path from $A$ to $B$ that has a length less than $4$. For instance, we cannot an edge to $P$ like the dashed edge in Figure~\ref{fig:diam4-shortenPathEx}.

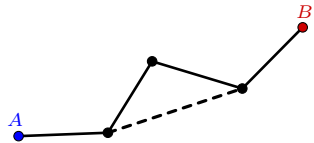
\begin{figure}
    \centering
    \begin{tikzpicture}[line cap=round,line join=round,>=triangle 45,x=0.7cm,y=0.7cm]
    \draw [line width=1pt] (-1.3950551209002424,1.6124844370643248)-- (0.31728942118441217,1.0955502356802764);
    \draw [line width=1pt] (0.31728942118441217,1.0955502356802764)-- (1.4642371805052656,2.258652188794385);
    \draw [line width=1pt] (-1.3950551209002424,1.6124844370643248)-- (-2.235073198149318,0.2555321584311982);
    \draw [line width=1pt] (-2.235073198149318,0.2555321584311982)-- (-3.9312635464407215,0.19091538325819216);
    \draw [line width=1.2pt, dashed] (-2.235073198149318,0.2555321584311982)-- (0.31728942118441217,1.0955502356802764);
    \begin{scriptsize}
    \draw [fill=black] (-1.3950551209002424,1.6124844370643248) circle (1.8pt);
    \draw [fill=black] (0.31728942118441217,1.0955502356802764) circle (1.8pt);
    \draw [fill=red] (1.4642371805052656,2.258652188794385) circle (1.8pt);
    \draw[color=red] (1.5,2.55) node {$B$};
    \draw [fill=blue] (-3.9312635464407215,0.19091538325819216) circle (1.8pt);
    \draw[color=blue] (-4,0.5) node {$A$};
    \draw [fill=black] (-2.235073198149318,0.2555321584311982) circle (1.8pt);
    \end{scriptsize}
    \end{tikzpicture}
    \caption{Adding the dashed edge above would shorten the distance from $A$ to $B$.} \label{fig:diam4-shortenPathEx}
\end{figure}

There are two ways to add the additional edges so that they form a cycle in $G$, and there are four ways to add two edges that do not form a cycle; all options are shown in Figure~\ref{fig:diam=4,6edges}. This can be confirmed by any exhaustive list of small graphs such as that in \cite{Steinbach}.

\begin{figure}
  \centering
  \begin{minipage}[b]{0.3\textwidth}\centering
    \begin{tikzpicture}[line cap=round,line join=round,>=triangle 45,x=0.5cm,y=0.5cm]
    \draw [line width=1pt] (10,0)-- (12,0);
    \draw [line width=1pt] (12,0)-- (12.74,1.5);
    \draw [line width=1pt] (12.74,1.5)-- (14.58,1.3);
    \draw [line width=1pt] (15.14,2.84)-- (14.58,1.3);
    \draw [line width=1pt] (12.74,1.5)-- (13.7,0.44);
    \draw [line width=1pt] (13.7,0.44)-- (12,0);
    \begin{scriptsize}
    \draw [fill=blue] (10,0) circle (1.8pt);
    \draw[color=blue] (9.6,0.29) node {$A$};
    \draw [fill=black] (12,0) circle (1.8pt);
    \draw [fill=black] (12.74,1.5) circle (1.8pt);
    \draw [fill=black] (14.58,1.3) circle (1.8pt);
    \draw [fill=red] (15.14,2.84) circle (1.8pt);
    \draw[color=red] (15.3,3.27) node {B};
    \draw [fill=black] (13.7,0.44) circle (1.8pt);
    \end{scriptsize}
    \end{tikzpicture}
    \caption*{\large (a)}
  \end{minipage}
  \begin{minipage}[b]{0.3\textwidth}\centering
    \begin{tikzpicture}[line cap=round,line join=round,>=triangle 45,x=0.5cm,y=0.5cm]
    \draw [line width=1pt] (16,0)-- (18,0);
    \draw [line width=1pt] (18,0)-- (18.74,1.5);
    \draw [line width=1pt] (18.74,1.5)-- (20.58,1.3);
    \draw [line width=1pt] (21.14,2.84)-- (20.58,1.3);
    \draw [line width=1pt] (18,0)-- (19.6,-0.04);
    \draw [line width=1pt] (19.6,-0.04)-- (20.58,1.3);
    \begin{scriptsize}
    \draw [fill=blue] (16,0) circle (1.8pt);
    \draw[color=blue] (15.6,0.29) node {$A$};
    \draw [fill=black] (18,0) circle (1.8pt);
    \draw [fill=black] (18.74,1.5) circle (1.8pt);
    \draw [fill=black] (20.58,1.3) circle (1.8pt);
    \draw [fill=red] (21.14,2.84) circle (1.8pt);
    \draw[color=red] (21.3,3.27) node {B};
    \draw [fill=black] (19.6,-0.04) circle (1.8pt);
    \end{scriptsize}
    \end{tikzpicture}
    \caption*{\large (b)}
  \end{minipage}
    \begin{minipage}[b]{0.3\textwidth}\centering
    \begin{tikzpicture}[line cap=round,line join=round,>=triangle 45,x=0.5cm,y=0.5cm]
    \draw [line width=1pt] (22,0)-- (24,0);
    \draw [line width=1pt] (24,0)-- (24.74,1.5);
    \draw [line width=1pt] (24.74,1.5)-- (26.58,1.3);
    \draw [line width=1pt] (24.74,1.5)-- (25.56,0.14);
    \draw [line width=1pt] (27.14,2.84)-- (26.58,1.3);
    \draw [line width=1pt] (24.74,1.5)-- (24.06,2.58);
    \begin{scriptsize}
    \draw [fill=blue] (22,0) circle (1.8pt);
    \draw[color=blue] (21.6,0.29) node {$A$};
    \draw [fill=black] (24,0) circle (1.8pt);
    \draw [fill=black] (24.74,1.5) circle (1.8pt);
    \draw [fill=black] (26.58,1.3) circle (1.8pt);
    \draw [fill=red] (27.14,2.84) circle (1.8pt);
    \draw[color=red] (27.3,3.27) node {B};
    \draw [fill=black] (25.56,0.14) circle (1.8pt);
    \draw [fill=black] (24.06,2.58) circle (1.8pt);
    \end{scriptsize}
    \end{tikzpicture}
    \caption*{\large (c)}
  \end{minipage}
  \begin{minipage}[b]{0.3\textwidth}\centering
    \begin{tikzpicture}[line cap=round,line join=round,>=triangle 45,x=0.5cm,y=0.5cm]
    \draw [line width=1pt] (-2,0)-- (0,0);
    \draw [line width=1pt] (0,0)-- (0.74,1.5);
    \draw [line width=1pt] (0.74,1.5)-- (2.58,1.3);
    \draw [line width=1pt] (2.58,1.3)-- (3.14,2.84);
    \draw [line width=1pt] (0,0)-- (-0.84,1.26);
    \draw [line width=1pt] (2.58,1.3)-- (1.82,2.56);
    \begin{scriptsize}
    \draw [fill=blue] (-2,0) circle (1.8pt);
    \draw[color=blue] (-2.4,0.29) node {$A$};
    \draw [fill=black] (0,0) circle (1.8pt);
    \draw [fill=black] (0.74,1.5) circle (1.8pt);
    \draw [fill=black] (2.58,1.3) circle (1.8pt);
    \draw [fill=red] (3.14,2.84) circle (1.8pt);
    \draw[color=red] (3.3,3.27) node {B};
    \draw [fill=black] (-0.84,1.26) circle (1.8pt);
    \draw[color=black] (-0.84, 1.66) node {$v_1$};
    \draw [fill=black] (1.82,2.56) circle (1.8pt);
    \draw[color=black] (1.82, 2.96) node {$v_2$};
    \end{scriptsize}
    \end{tikzpicture}
    \caption*{\large (d)}
  \end{minipage}
    \begin{minipage}[b]{0.3\textwidth}\centering
    \begin{tikzpicture}[line cap=round,line join=round,>=triangle 45,x=0.5cm,y=0.5cm]
    \draw [line width=1pt] (4,0)-- (6,0);
    \draw [line width=1pt] (6,0)-- (6.74,1.5);
    \draw [line width=1pt] (6.74,1.5)-- (8.58,1.3);
    \draw [line width=1pt] (8.58,1.3)-- (9.14,2.84);
    \draw [line width=1pt] (6,0)-- (5.16,1.26);
    \draw [line width=1pt] (6.74,1.5)-- (7.56,0.14);
    \begin{scriptsize}
    \draw [fill=blue] (4,0) circle (1.8pt);
    \draw[color=blue] (3.6,0.29) node {$A$};
    \draw [fill=black] (6,0) circle (1.8pt);
    \draw [fill=black] (6.74,1.5) circle (1.8pt);
    \draw [fill=black] (8.58,1.3) circle (1.8pt);
    \draw [color=black] (8.7, 0.9) node {$v_2$};
    \draw [fill=red] (9.14,2.84) circle (1.8pt);
    \draw[color=red] (9.3,3.27) node {B};
    \draw [fill=black] (5.16,1.26) circle (1.8pt);
    \draw [color=black] (5.16, 1.56) node {$v_1$};
    \draw [fill=black] (7.56,0.14) circle (1.8pt);
    \end{scriptsize}
    \end{tikzpicture}
    \caption*{\large (e)}
  \end{minipage}
    \begin{minipage}[b]{0.3\textwidth}\centering
    \begin{tikzpicture}[line cap=round,line join=round,>=triangle 45,x=0.5cm,y=0.5cm]
    \draw [line width=1pt] (10,0)-- (12,0);
    \draw [line width=1pt] (12,0)-- (12.74,1.5);
    \draw [line width=1pt] (12.74,1.5)-- (14.58,1.3);
    \draw [line width=1pt] (14.58,1.3)-- (15.14,2.84);
    \draw [line width=1pt] (12,0)-- (11.16,1.26);
    \draw [line width=1pt] (12,0)-- (12.76,-1.16);
    \begin{scriptsize}
    \draw [fill=blue] (10,0) circle (1.8pt);
    \draw[color=blue] (9.6,0.29) node {$A$};
    \draw [fill=black] (12,0) circle (1.8pt);
    \draw [fill=black] (12.74,1.5) circle (1.8pt);
    \draw [fill=black] (14.58,1.3) circle (1.8pt);
    \draw [fill=red] (15.14,2.84) circle (1.8pt);
    \draw[color=red] (15.3,3.27) node {B};
    \draw [fill=black] (11.16,1.26) circle (1.8pt);
    \draw [fill=black] (12.76,-1.16) circle (1.8pt);
    \end{scriptsize}
    \end{tikzpicture}
    \caption*{\large (f)}
  \end{minipage}
  \caption{There are six ways to add two edges to $P$ such that both are adjacent to a vertex in $P$ which is not $A$ or $B$ and so that $P$ remains a shortest path from $A$ to $B$.} \label{fig:diam=4,6edges}
  \end{figure}

We claim that each of these graphs has an infinite $d$ value and accumulates $C_5$ or $N$. First, graph (a) has $P_5$ as a subgraph and hence $C_5$ as a snipped subgraph. Next, both (b) and (d) have $K_{2,3}$ as a snipped subgraph. In (b) this can be seen by identifying vertices $A$ and $B$. In (d), identify $A$ and $B$ with each other and $v_1$ and $v_2$ with each other. Graphs (c) and (f) both have the Bug as a snipped subgraph which can be seen by identifying vertices $A$ and $B$. Lastly, graph (e) has the net graph $N$ as a snipped subgraph. To see this, identify the vertices labeled $v_1$ and $v_2$ with each other.

Thus, if $G$ has a diameter of 4 and at least 6 edges, it will have an infinite $d$ value and accumulate $C_5$ or $N$. All graphs with diameter 4 and fewer than 6 edges will dissipate and therefore never accumulate $C_5$ or $N$. We have then shown the equivalence of the statements.
\end{proof}

\begin{lem} \label{Lemma: diam=3}
Suppose $G$ has a diameter of 3. If $d(G) = \infty$, then it will accumulate $C_5$ or $N$. Furthermore, if $G$ has at least 7 edges, then $d(G) = \infty$ with the single exception of the family in Figure~\ref{fig:exception_family}.
\end{lem}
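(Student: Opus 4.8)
The plan is to follow the template of Lemma~\ref{Lemma: diam=4}: fix a shortest path $P = A - u - v - B$ of length $3$ that realizes the diameter, and study how the rest of $G$ attaches to $P$. As before, the geodesic condition forbids every chord of $P$, since an edge $Av$, $uB$, or $AB$ would give an $A$--$B$ walk of length at most $2$; hence $\{A, u, v, B\}$ induces exactly $P$. A second consequence of diameter $3$, special to this case, is that the endpoints $A$ and $B$ have very restricted neighborhoods: a neighbor $A'$ of $A$ lying off $P$ would satisfy $d(A', B) \le 4$ along $A' - A - u - v - B$, so to keep the diameter at $3$ it must have a shortcut to $\{u, v, B\}$. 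This is the key structural difference from the diameter-$4$ proof, where one could freely lengthen $P$ at an endpoint to manufacture a $C_5$.

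For the accumulation claim I would show that whenever $d(G) = \infty$, one of the six graphs $C_5$, $N$, $K_{2,3}$, the Bug, the Stickman, or the Pendulum occurs as a \emph{snipped} subgraph of $G$. Each of these was proved in Section~\ref{Section 4.2} to be $d$-infinite and to accumulate $C_5$ or $N$, so Corollary~\ref{Corollary: snipped_d=infinity} yields $d(G) = \infty$, and iterating Lemma~\ref{Lemma: H_snipped_of_G, J(H)_in_J(G)} carries the accumulating piece into some $J^k(G)$. The detections would be: (i) a cycle of length $5$ or any length-$5$ path snips to $C_5$ by identifying its two ends; (ii) two vertices with three common neighbors give $K_{2,3}$ outright; (iii) a triangle whose three vertices each reach a further vertex snips to the net $N$; and (iv) the remaining denser local patterns around the high-degree internal vertices $u$ and $v$ snip to the Bug, Stickman, or Pendulum, exactly as in the six subcases of Lemma~\ref{Lemma: diam=4}. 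Because the endpoints cannot be extended, the workhorse here is $K_{2,3}$ rather than $C_5$.

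For the edge-count statement I would read off the classification of $d$-finite graphs recorded in Figure~\ref{fig:dissipating_graphs}. The only connected $d$-finite graphs of diameter $3$ are a short list of small graphs (such as $P_4$ and the $H$-shape, all with fewer than seven edges) together with one infinite family: the broom obtained by attaching $n$ pendant edges to one endpoint of a path of length $2$, which is the family of Figure~\ref{fig:exception_family}. A direct computation shows that the jump graph of this broom is a star together with an isolated vertex, so it dissipates; yet it has $n + 2$ edges and diameter $3$. Since every other $d$-finite diameter-$3$ graph has fewer than seven edges, any diameter-$3$ graph with at least $7$ edges that is not a broom must be $d$-infinite, and by the first part it then accumulates $C_5$ or $N$.

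The main obstacle is the case analysis of the second paragraph, which is markedly larger than in Lemma~\ref{Lemma: diam=4} because diameter $3$ permits dense graphs: the internal vertices $u$ and $v$ may have high degree, and there may be many vertices at distance $1$ or $2$ from $A$. The cleanest way to control this is to extract $K_{2,3}$ aggressively. Since $K_{2,3}$ has diameter $2$ it is fully compatible with the hypothesis, and it appears as soon as two vertices share three common neighbors---a pattern that is hard to avoid once the edge count climbs. Forbidding $K_{2,3}$ and its snipped relatives (the Bug, the Stickman, and the Pendulum) forces the neighborhoods of $u$ and $v$ to be small and essentially tree-like, which is exactly what collapses the surviving graphs onto the broom family and the handful of small exceptions.
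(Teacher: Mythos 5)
Your overall strategy---detect $C_5$, $N$, $K_{2,3}$, the Bug, the Stickman, or the Pendulum as a snipped subgraph, and isolate the broom family of Figure~\ref{fig:exception_family} as the lone $d$-finite exception---is the same as the paper's, and your individual observations are correct (a path of length $5$ snips to $C_5$, two vertices with three common neighbours give $K_{2,3}$ outright, and the broom's jump graph is a star plus an isolated vertex, so it dissipates). The genuine gap is that the substance of the lemma is an exhaustive case analysis which your proposal only gestures at, and the one concrete claim you make about it---that the dense cases snip to the Bug, Stickman or Pendulum ``exactly as in the six subcases of Lemma~\ref{Lemma: diam=4}''---is not accurate. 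The paper's analysis is structured differently: it first enumerates \emph{all} diameter-$3$ graphs with $3$, $4$, $5$ and $6$ edges (there are eleven with six edges, of which seven are already $d$-infinite and four are $d$-finite); then for $\abs{E(G)}\ge 7$ it passes to a connected six-edge subgraph containing $P$, whose diameter may be $3$, $4$, or at least $5$ since deleting edges can increase distances---the diameter-$4$ case is where Lemma~\ref{Lemma: diam=4} is actually invoked. The remaining heart of the proof is checking every single-edge extension of each of the four $d$-finite six-edge graphs and verifying that all of them except adding a further pendant to the centre of graph (a) produce one of the six target graphs as a snipped subgraph. None of this appears in your plan; your heuristic that forbidding $K_{2,3}$ forces tree-like neighbourhoods is a reasonable guide but not a substitute for the enumeration. (Also, a neighbour $A'$ of $A$ off $P$ need only admit \emph{some} path of length at most $3$ to $B$, not necessarily a shortcut into $\{u,v,B\}$, so that structural restriction is weaker than you state.)

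Separately, your plan to establish the edge-count claim by reading off the classification in Figure~\ref{fig:dissipating_graphs} is a different route from the paper's for that half of the lemma, and it is legitimate only if you import the completeness of that classification from \cite{Char}; within this paper the figure is justified only by an outline. More importantly, even granting the full classification, it yields only the second sentence of the lemma (that $d(G)=\infty$ for seven or more edges outside the broom family); it does not yield the first sentence, that every $d$-infinite diameter-$3$ graph \emph{accumulates} $C_5$ or $N$. That accumulation statement is precisely what forces the casework, so the shortcut does not let you avoid it.
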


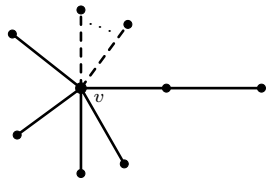
\begin{figure}[H]
    \centering
    \begin{tikzpicture}[line cap=round,line join=round,>=triangle 45,x=0.8cm,y=0.8cm]
    \draw [line width=1pt,dash pattern=on 3pt off 3pt] (0,0)-- (0,1.3);
    \draw [line width=1pt] (0,0)-- (1.42,0);
    \draw [line width=1pt] (0,0)-- (0,-1.42);
    \draw [line width=1pt] (1.42,0)-- (3,0);
    \draw [line width=1pt] (0,0)-- (-1.14,0.9);
    \draw [line width=1pt] (0,0)-- (-1.06,-0.78);
    \draw [line width=1pt,dash pattern=on 3pt off 3pt] (0,0)-- (0.78,1.06);
    \draw [line width=1pt] (0,0)-- (0.72,-1.26);
    \begin{scriptsize}
    \draw [fill=black] (0,0) circle (2pt);
    \draw[color=black] (0.3,-0.17) node {$v$};
    \draw [fill=black] (0,1.3) circle (1.5pt);
    \draw [fill=black] (1.42,0) circle (1.5pt);
    \draw [fill=black] (0,-1.42) circle (1.5pt);
    \draw [fill=black] (3,0) circle (1.5pt);
    \draw [fill=black] (-1.14,0.9) circle (1.5pt);
    \draw [fill=black] (-1.06,-0.78) circle (1.5pt);
    \draw [fill=black] (0.78,1.06) circle (1.5pt);
    \draw [fill=black] (0.72,-1.26) circle (1.5pt);
    \draw (0.35,1.0) node {\rotatebox{-18}{$\dots$}};
    \end{scriptsize}
    \end{tikzpicture}
    \caption{The only family graphs with diameter 3, at least 7 edges, and finite $d$-value. Any number of additional pendants can be added off vertex $v$.}
    \label{fig:exception_family}
\end{figure}

\begin{proof}
Throughout this proof, whenever we say that a graph has finite $d$ value ``by explicit calculation,'' one can look back to Figure~\ref{fig:dissipating_graphs} to find the relevant calculation. 

Let $P$ be a shortest path in $G$ with length $3$ and endpoints $A$ and $B$. If $G$ has $3$ edges, then $G = P$. This graph has a finite $d$ value by explicit calculation. If $G$ has $4$ edges, then it must be the graph in Figure~\ref{fig:diam=3,4edges}, up to isomorphism. This graph also has finite $d$ value by explicit calculation.

\begin{figure}
    \centering
    \begin{tikzpicture}[line cap=round,line join=round,>=triangle 45,x=0.6cm,y=0.6cm]
    \draw [line width=1pt] (-3,0)-- (-1,0);
    \draw [line width=1pt] (-1,0)-- (1,1);
    \draw [line width=1pt] (1,1)-- (3,1);
    \draw [line width=1pt] (1,1)-- (1.52,-0.48);
    \begin{scriptsize}
    \draw [fill=blue] (-3,0) circle (1.8pt);
    \draw [fill=black] (-1,0) circle (1.8pt);
    \draw [fill=black] (1,1) circle (1.8pt);
    \draw [fill=red] (3,1) circle (1.8pt);
    \draw [fill=black] (1.52,-0.48) circle (1.8pt);
    \end{scriptsize}
    \end{tikzpicture}
    \caption{The only graph with diameter 3 and 4 edges.}
    \label{fig:diam=3,4edges}
\end{figure}
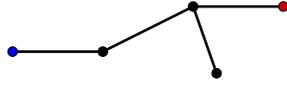

If $G$ has $5$ edges, then it can be one of five possible graphs shown in Figure~\ref{fig: diam=3, 5 edges}. One can see that these are all the graphs with diameter $3$ and $5$ edges by referencing a list of small graphs such as that given in \cite{Steinbach}. Each graph in Figure~\ref{fig: diam=3, 5 edges} has finite $d$ value as shown by explicit calculation.

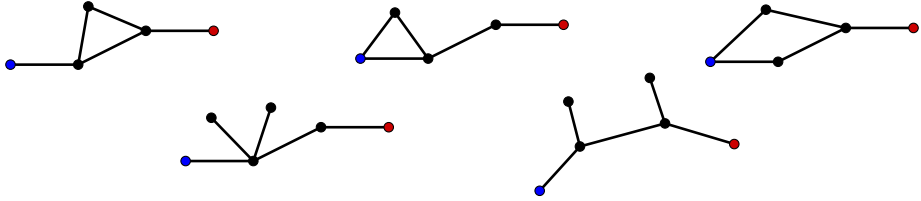
\begin{figure}
  \centering
  \begin{minipage}[b]{0.3\textwidth}\centering
    \begin{tikzpicture}[line cap=round,line join=round,>=triangle 45,x=0.45cm,y=0.45cm]
    \draw [line width=1pt] (-3,0)-- (-1,0);
    \draw [line width=1pt] (-1,0)-- (1,1);
    \draw [line width=1pt] (1,1)-- (3,1);
    \draw [line width=1pt,color=black] (-0.7,1.72)-- (-1,0);
    \draw [line width=1pt,color=black] (-0.7,1.72)-- (1,1);
    \begin{scriptsize}
    \draw [fill=blue] (-3,0) circle (1.8pt);
    \draw [fill=black] (-1,0) circle (1.8pt);
    \draw [fill=black] (1,1) circle (1.8pt);
    \draw [fill=red] (3,1) circle (1.8pt);
    \draw [fill=black] (-0.7,1.72) circle (1.8pt);
    \end{scriptsize}
    \end{tikzpicture}
  \end{minipage}
  \begin{minipage}[b]{0.3\textwidth}\centering
    \begin{tikzpicture}[line cap=round,line join=round,>=triangle 45,x=0.45cm,y=0.45cm]
    \draw [line width=1pt] (-3,0)-- (-1,0);
    \draw [line width=1pt] (-1,0)-- (1,1);
    \draw [line width=1pt] (1,1)-- (3,1);
    \draw [line width=1pt,color=black] (-3,0)-- (-1.98,1.36);
    \draw [line width=1pt,color=black] (-1.98,1.36)-- (-1,0);
    \begin{scriptsize}
    \draw [fill=blue] (-3,0) circle (1.8pt);
    \draw [fill=black] (-1,0) circle (1.8pt);
    \draw [fill=black] (1,1) circle (1.8pt);
    \draw [fill=red] (3,1) circle (1.8pt);
    \draw [fill=black] (-1.98,1.36) circle (1.8pt);
    \end{scriptsize}
    \end{tikzpicture}
  \end{minipage}
    \begin{minipage}[b]{0.3\textwidth}\centering
    \begin{tikzpicture}[line cap=round,line join=round,>=triangle 45,x=0.45cm,y=0.45cm]
    \draw [line width=1pt] (-3,0)-- (-1,0);
    \draw [line width=1pt] (-1,0)-- (1,1);
    \draw [line width=1pt] (1,1)-- (3,1);
    \draw [line width=1pt,color=black] (-1.36,1.54)-- (1,1);
    \draw [line width=1pt,color=black] (-1.36,1.54)-- (-3,0);
    \begin{scriptsize}
    \draw [fill=blue] (-3,0) circle (1.8pt);
    \draw [fill=black] (-1,0) circle (1.8pt);
    \draw [fill=black] (1,1) circle (1.8pt);
    \draw [fill=red] (3,1) circle (1.8pt);
    \draw [fill=black] (-1.36,1.54) circle (1.8pt);
    \end{scriptsize}
    \end{tikzpicture}
  \end{minipage} \begin{minipage}[b]{0.3\textwidth}\centering
    \begin{tikzpicture}[line cap=round,line join=round,>=triangle 45,x=0.45cm,y=0.45cm]
    \draw [line width=1pt] (-3,0)-- (-1,0);
    \draw [line width=1pt] (-1,0)-- (1,1);
    \draw [line width=1pt] (1,1)-- (3,1);
    \draw [line width=1pt] (-1,0)-- (-2.24,1.28);
    \draw [line width=1pt] (-0.48,1.58)-- (-1,0);
    \begin{scriptsize}
    \draw [fill=blue] (-3,0) circle (1.8pt);
    \draw [fill=black] (-1,0) circle (1.8pt);
    \draw [fill=black] (1,1) circle (1.8pt);
    \draw [fill=red] (3,1) circle (1.8pt);
    \draw [fill=black] (-2.24,1.28) circle (1.8pt);
    \draw [fill=black] (-0.48,1.58) circle (1.8pt);
    \end{scriptsize}
    \end{tikzpicture}
  \end{minipage}
  \begin{minipage}[b]{0.3\textwidth}\centering
    \begin{tikzpicture}[line cap=round,line join=round,>=triangle 45,x=0.5cm,y=0.5cm]
    \draw [line width=1pt] (-1.2496673767609794,0.15860699567168912)-- (1.0119197542942249,0.7724663598152464);
    \draw [line width=1pt] (1.0119197542942249,0.7724663598152464)-- (2.853497846724891,0.22322377084469516);
    \draw [line width=1pt] (-1.2496673767609794,0.15860699567168912)-- (-1.5565970588327571,1.3540173363723007);
    \draw [line width=1pt] (-1.2496673767609794,0.15860699567168912)-- (-2.32,-1.02);
    \draw [line width=1pt] (1.0119197542942249,0.7724663598152464)-- (0.6080649094629383,1.9840308943091094);
    \begin{scriptsize}
    \draw [fill=black] (-1.2496673767609794,0.15860699567168912) circle (1.8pt);
    \draw [fill=black] (1.0119197542942249,0.7724663598152464) circle (1.8pt);
    \draw [fill=red] (2.853497846724891,0.22322377084469516) circle (1.8pt);
    \draw [fill=black] (-1.5565970588327571,1.3540173363723007) circle (1.8pt);
    \draw [fill=blue] (-2.32,-1.02) circle (1.8pt);
    \draw [fill=black] (0.6080649094629383,1.9840308943091094) circle (1.8pt);
    \end{scriptsize}
    \end{tikzpicture}
  \end{minipage}
    \caption{All graphs with diameter 3 and 5 edges.}\label{fig: diam=3, 5 edges}
\end{figure}

Now, let us consider the graphs with diameter $3$ and exactly $6$ edges. The possibilities, up to isomorphism, are shown in Figure~\ref{fig:diam3-6edges} which can be confirmed by referencing \cite{Steinbach}.

\begin{figure}
    \centering
    \begin{minipage}{.3\textwidth}\centering
    \begin{tikzpicture}[line cap=round,line join=round,>=triangle 45,x=0.5cm,y=0.5cm]
    \draw [line width=1pt] (-3,0)-- (-1,0);
    \draw [line width=1pt] (-1,0)-- (1,1);
    \draw [line width=1pt] (1,1)-- (3,1);
    \draw [line width=1pt,color=black] (-1,0)-- (-2.24,1.28);
    \draw [line width=1pt,color=black] (-0.48,1.58)-- (-1,0);
    \draw [line width=1pt,color=black] (-1,0)-- (-1.66,-1.44);
    \begin{scriptsize}
    \draw [fill=white] (-3,0) circle (1.8pt);
    \draw[color=black] (-3.5,0) node {$v_1$};
    \draw [fill=black] (-1,0) circle (1.8pt);
    \draw[color=black] (-0.8,-0.35) node {$v_2$};
    \draw [fill=black] (1,1) circle (1.8pt);
    \draw[color=black] (1,0.5) node {$v_3$};
    \draw [fill=black] (3,1) circle (1.8pt);
    \draw[color=black] (3,0.5) node {$v_4$};
    \draw [fill=white] (-2.24,1.28) circle (1.8pt);
    \draw [fill=white] (-0.48,1.58) circle (1.8pt);
    \draw [fill=white] (-1.66,-1.44) circle (1.8pt);
    \end{scriptsize}
    \end{tikzpicture}
    \caption*{\large (a)}
    \end{minipage}
    \begin{minipage}{.3\textwidth}\centering
    \begin{tikzpicture}[line cap=round,line join=round,>=triangle 45,x=0.5cm,y=0.5cm]
    \draw [line width=1pt] (-3,0)-- (-1,0);
    \draw [line width=1pt] (-1,0)-- (1,1);
    \draw [line width=1pt] (1,1)-- (3,1);
    \draw [line width=1pt] (-1.92,1.52)-- (-1,0);
    \draw [line width=1pt] (-1.92,1.52)-- (-3,0);
    \draw [line width=1pt,color=black] (-1,0)-- (-0.2,1.88);
    \begin{scriptsize}
    \draw [fill=white] (-3,0) circle (1.8pt);
    \draw[color=black] (-3.5,0) node {$v_1$};
    \draw [fill=black] (-1,0) circle (1.8pt);
    \draw[color=black] (-1,-0.5) node {$v_2$};
    \draw [fill=black] (1,1) circle (1.8pt);
    \draw[color=black] (1,0.5) node {$v_4$};
    \draw [fill=black] (3,1) circle (1.8pt);
    \draw[color=black] (3,0.5) node {$v_5$};
    \draw [fill=white] (-1.92,1.52) circle (1.8pt);
    \draw [fill=black] (-0.2,1.88) circle (1.8pt);
    \draw[color=black](0.3,1.88) node {$v_3$};
    \end{scriptsize}
    \end{tikzpicture}
    \caption*{\large (b)}
    \end{minipage}
    \begin{minipage}{.3\textwidth}\centering
    \begin{tikzpicture}[line cap=round,line join=round,>=triangle 45,x=0.5cm,y=0.5cm]
    \draw [line width=1pt] (-3,0)-- (-1,0);
    \draw [line width=1pt] (-1,0)-- (1,1);
    \draw [line width=1pt] (1,1)-- (3,1);
    \draw [line width=1pt] (-0.88,1.82)-- (1,1);
    \draw [line width=1pt,color=black] (-1,0)-- (-2.44,1.1);
    \draw [line width=1pt] (-0.88,1.82)-- (-1,0);
    \begin{scriptsize}
    \draw [fill=white] (-3,0) circle (1.8pt);
    \draw[color=black] (-3.5,0) node {$v_1$};
    \draw [fill=black] (-1,0) circle (1.8pt);
    \draw[color=black] (-1,-0.5) node {$v_2$};
    \draw [fill=black] (1,1) circle (1.8pt);
    \draw[color=black] (1,0.5) node {$v_4$};
    \draw [fill=black] (3,1) circle (1.8pt);
    \draw[color=black] (3,0.5) node {$v_5$};
    \draw [fill=black] (-0.88,1.82) circle (1.8pt);
    \draw[color=black] (-0.88,2.3) node {$v_3$};
    \draw [fill=white] (-2.44,1.1) circle (1.8pt);
    \end{scriptsize}
    \end{tikzpicture}
    \caption*{\large (c)}
    \end{minipage}
    \begin{minipage}{.3\textwidth}\centering
    \begin{tikzpicture}[line cap=round,line join=round,>=triangle 45,x=0.5cm,y=0.5cm]
    \draw [line width=1pt] (-3,0)-- (-0.6,-0.4);
    \draw [line width=1pt] (-0.6,-0.4)-- (1,1);
    \draw [line width=1pt] (1,1)-- (3,1);
    \draw [line width=1pt] (-1.56,1.88)-- (1,1);
    \draw [line width=1pt] (-1.56,1.88)-- (-3,0);
    \draw [line width=1pt,color=black] (-1.56,1.88)-- (-0.6,-0.4);
    \begin{scriptsize}
    \draw [fill=black] (-3,0) circle (1.8pt);
    \draw[color=black] (-3.5,0) node {$v_1$};
    \draw [fill=white] (-0.6,-0.4) circle (1.8pt);
    \draw[color=black] (-0.6,-0.8) node {$v_2$};
    \draw [fill=black] (1,1) circle (1.8pt);
    \draw[color=black] (1.2,0.5) node {$v_3$};
    \draw [fill=black] (3,1) circle (1.8pt);
    \draw[color=black] (3,0.5) node {$v_4$};
    \draw [fill=white] (-1.56,1.88) circle (1.8pt);
    \end{scriptsize}
    \end{tikzpicture}
    \caption*{\large (d)}
    \end{minipage}
    \begin{minipage}{.3\textwidth}\centering
    \begin{tikzpicture}[line cap=round,line join=round,>=triangle 45,x=0.45cm,y=0.45cm]
    \draw [line width=1pt] (5,0)-- (7,0);
    \draw [line width=1pt] (7,0)-- (9,1);
    \draw [line width=1pt] (9,1)-- (11,1);
    \draw [line width=1pt,color=black] (7,0)-- (5.76,1.28);
    \draw [line width=1pt,color=black] (7.52,1.58)-- (7,0);
    \draw [line width=1pt,color=black] (9,1)-- (10.16,-0.28);
    \begin{scriptsize}
    \draw [fill=blue] (5,0) circle (1.8pt);
    \draw [fill=black] (7,0) circle (1.8pt);
    \draw [fill=black] (9,1) circle (1.8pt);
    \draw [fill=red] (11,1) circle (1.8pt);
    \draw [fill=black] (5.76,1.28) circle (1.8pt);
    \draw [fill=black] (7.52,1.58) circle (1.8pt);
    \draw [fill=black] (10.16,-0.28) circle (1.8pt);
    \end{scriptsize}
    \end{tikzpicture}
    \caption*{\large (e)}
    \end{minipage}
    \begin{minipage}{.3\textwidth}\centering
    \begin{tikzpicture}[line cap=round,line join=round,>=triangle 45,x=0.4cm,y=0.4cm]
    \draw [line width=1pt] (13,0)-- (15,0);
    \draw [line width=1pt] (15,0)-- (17,1);
    \draw [line width=1pt] (17,1)-- (19,1);
    \draw [line width=1pt] (14.44,1.88)-- (17,1);
    \draw [line width=1pt] (14.44,1.88)-- (13,0);
    \draw [line width=1pt,color=black] (17,1)-- (18.24,2.24);
    \begin{scriptsize}
    \draw [fill=blue] (13,0) circle (1.8pt);
    \draw [fill=black] (15,0) circle (1.8pt);
    \draw [fill=black] (17,1) circle (1.8pt);
    \draw [fill=red] (19,1) circle (1.8pt);
    \draw [fill=black] (14.44,1.88) circle (1.8pt);
    \draw [fill=black] (18.24,2.24) circle (1.8pt);
    \end{scriptsize}
    \end{tikzpicture}
    \caption*{\large (f)}
    \end{minipage}
    \begin{minipage}{.3\textwidth}\centering
    \begin{tikzpicture}[line cap=round,line join=round,>=triangle 45,x=0.4cm,y=0.4cm]
    \draw [line width=1pt] (-3,0)-- (-1,0);
    \draw [line width=1pt] (-1,0)-- (1,1);
    \draw [line width=1pt] (1,1)-- (3,1);
    \draw [line width=1pt] (-1.96,1.64)-- (-1,0);
    \draw [line width=1pt] (-1.96,1.64)-- (-3,0);
    \draw [line width=1pt,color=black] (1,1)-- (1.52,-0.48);
    \begin{scriptsize}
    \draw [fill=blue] (-3,0) circle (1.8pt);
    \draw [fill=black] (-1,0) circle (1.8pt);
    \draw [fill=black] (1,1) circle (1.8pt);
    \draw [fill=red] (3,1) circle (1.8pt);
    \draw [fill=black] (-1.96,1.64) circle (1.8pt);
    \draw [fill=black] (1.52,-0.48) circle (1.8pt);
    \end{scriptsize}
    \end{tikzpicture}
    \caption*{\large (g)}
    \end{minipage}
    \begin{minipage}{.3\textwidth}\centering
    \begin{tikzpicture}[line cap=round,line join=round,>=triangle 45,x=0.45cm,y=0.45cm]
    \draw [line width=1pt] (-3,0)-- (-0.6,-0.4);
    \draw [line width=1pt] (-0.6,-0.4)-- (1,1);
    \draw [line width=1pt] (1,1)-- (3,1);
    \draw [line width=1pt] (-1.56,1.88)-- (1,1);
    \draw [line width=1pt] (-1.56,1.88)-- (-3,0);
    \draw [line width=1pt,color=black] (-0.6,-0.4)-- (1.68,-0.34);
    \begin{scriptsize}
    \draw [fill=blue] (-3,0) circle (1.8pt);
    \draw [fill=black] (-0.6,-0.4) circle (1.8pt);
    \draw [fill=black] (1,1) circle (1.8pt);
    \draw [fill=red] (3,1) circle (1.8pt);
    \draw [fill=black] (-1.56,1.88) circle (1.8pt);
    \draw [fill=black] (1.68,-0.34) circle (1.8pt);
    \end{scriptsize}
    \end{tikzpicture}
    \caption*{\large (h)}
    \end{minipage}
    \begin{minipage}{.3\textwidth}\centering
    \begin{tikzpicture}[line cap=round,line join=round,>=triangle 45,x=0.4cm,y=0.4cm]
    \draw [line width=1pt] (5,0)-- (7,0);
    \draw [line width=1pt] (7,0)-- (9,1);
    \draw [line width=1pt] (9,1)-- (11,1);
    \draw [line width=1pt] (7.16,1.8)-- (7,0);
    \draw [line width=1pt] (7.16,1.8)-- (9,1);
    \draw [line width=1pt,color=black] (7.16,1.8)-- (6.44,3.36);
    \begin{scriptsize}
    \draw [fill=blue] (5,0) circle (1.8pt);
    \draw [fill=black] (7,0) circle (1.8pt);
    \draw [fill=black] (9,1) circle (1.8pt);
    \draw [fill=red] (11,1) circle (1.8pt);
    \draw [fill=black] (7.16,1.8) circle (1.8pt);
    \draw [fill=black] (6.44,3.36) circle (1.8pt);
    \end{scriptsize}
    \end{tikzpicture}
    \caption*{\large (i)}
    \end{minipage}
    \begin{minipage}{.3\textwidth}\centering
    \begin{tikzpicture}[line cap=round,line join=round,>=triangle 45,x=0.7cm,y=0.7cm]
    \draw [line width=1pt] (0,1)-- (-0.951,0.309);
    \draw [line width=1pt] (-0.951,0.309)-- (-0.588,-0.809);
    \draw [line width=1pt] (-0.588,-0.809)-- (0.588,-0.809);
    \draw [line width=1pt] (0.588,-0.809)-- (0.951,0.309);
    \draw [line width=1pt] (0.951,0.309)-- (0,1);
    \draw [line width=1pt] (0.951,0.309)-- (1.8389767872148006,0.6398988690399297);
    \begin{scriptsize}
    \draw [fill=black] (0,1) circle (1.8pt);
    \draw [fill=black] (-0.951,0.309) circle (1.8pt);
    \draw [fill=blue] (-0.588,-0.809) circle (1.8pt);
    \draw [fill=black] (0.588,-0.809) circle (1.8pt);
    \draw [fill=black] (0.951,0.309) circle (1.8pt);
    \draw [fill=red] (1.8389767872148006,0.6398988690399297) circle (1.8pt);
    \end{scriptsize}
\end{tikzpicture}
    \caption*{\large (j)}
    \end{minipage}
    \begin{minipage}{.3\textwidth}\centering
    \begin{tikzpicture}[line cap=round,line join=round,>=triangle 45,x=0.8cm,y=0.8cm]
    \draw [line width=1pt] (0,1)-- (-0.8660254037844385,0.5);
    \draw [line width=1pt] (-0.8660254037844385,0.5)-- (-0.8660254037844388,-0.5);
    \draw [line width=1pt] (-0.8660254037844388,-0.5)-- (0,-1);
    \draw [line width=1pt] (0,-1)-- (0.8660254037844384,-0.5);
    \draw [line width=1pt] (0.8660254037844384,-0.5)-- (0.8660254037844386,0.5);
    \draw [line width=1pt] (0.8660254037844386,0.5)-- (0,1);
    \begin{scriptsize}
    \draw [fill=black] (0,1) circle (1.8pt);
    \draw [fill=black] (-0.8660254037844385,0.5) circle (1.8pt);
    \draw [fill=blue] (-0.8660254037844388,-0.5) circle (2pt);
    \draw [fill=black] (0,-1) circle (2pt);
    \draw [fill=black] (0.8660254037844384,-0.5) circle (2pt);
    \draw [fill=red] (0.8660254037844386,0.5) circle (2pt);
    \end{scriptsize}
    \end{tikzpicture}
    \caption*{\large (k)}
    \end{minipage}
    \caption{All graphs with a diameter of 3 and 6 edges. Graphs (a) - (d) have finite $d$ value while graphs (e) - (k) have infinite $d$ value.}
    \label{fig:diam3-6edges}
\end{figure}

Graphs (a) through (d) in Figure~\ref{fig:diam3-6edges} have a finite $d$ value which can be seen by explicit calculation. Graphs (e) through (k) each have infinite $d$ value and accumulate $C_5$ or $N$: (e) is the Stickman, (f) is the Bug Graph, (g) is the Pendulum, (h) has $P_5$ as a subgraph, (i) is the net graph $N$, (j) has $C_5$ as a subgraph, and (k) has $P_5$ as a subgraph. Therefore, if $G$ has 6 edges then it either has finite $d$ value, or $d(G) = \infty$ and $G$ accumulates $C_5$ or $N$.

Now, suppose that $G$ has diameter $3$ and $7$ or more edges. Then $G$ must have a connected subgraph containing 6 edges: $P$ and $3$ additional edges. If this subgraph has a diameter of $5$ or greater, then it will contain $C_5$ as a snipped subgraph. This means $d(G) = \infty$ and $G$ accumulates $C_5$. If this subgraph has diameter $4$, then by Lemma \ref{Lemma: diam=4} we conclude that this subgraph---and consequently $G$---will have an infinite $d$ value and accumulate $C_5$ or $N$. If this subgraph has diameter $3$, then it must be one of the eleven graphs shown in Figure \ref{fig:diam3-6edges}. The subgraph cannot have diameter less than $3$ because we are assuming that $P$ is a shortest path between $A$ and $B$.

If the subgraph of $G$ is isomorphic to graphs (e) through (k) in Figure~\ref{fig:diam3-6edges}, we have already shown that it is $d$-infinite and accumulates $C_5$ or $N$, so by Lemmas~\ref{Corollary: snipped_d=infinity} and \ref{Lemma: J(H)_subgraph_of_J(G)}, $d(G) = \infty$ and $G$ accumulates $C_5$ or $N$. Now, we will show that if $G$ \emph{strictly} contains any of the graphs (a) through (d), it will have $d = \infty$ and accumulate $C_5$ or $N$, except if $G$ is part of the family shown in Figure~\ref{fig:exception_family}. 

Before beginning the casework, we make two observations.
\begin{enumerate}
    \item Consider the graph (a) as shown in Figure~\ref{fig:diam3-6edges} and notice that each of the white vertices are identical under graph automorphism. Thus, without loss of generality, we will consider additions to graph (a) involving $v_1$ as a representative for all cases involving one white vertex since all other white vertices will follow the same logic as $v_1$.
    \item Now consider two graphs formed by adding an edge to graph (a): in the first, connect any non-adjacent vertices $v_i$ and $v_j$ with an edge; in the second, add a pendant edge to $v_i$. The second graph has the first graph as a snipped subgraph. Because of this fact, we need not consider adding pendant edges to the graph unless there is a vertex which cannot be connected to any other vertex in the graph.
\end{enumerate}
As we now examine graphs (a) through (d) from Figure~\ref{fig:diam3-6edges} separately, we will apply these two observations in the casework for each graph.

First, we look at graph (a). Adding an edge between $v_1$ and $v_4$ will give the Bug as a subgraph, and adding an edge between $v_1$ and and $v_3$ gives the Stickman as a subgraph. If we add an edge between two white vertices, we again acquire the Bug as a snipped subgraph. Adding an edge between $v_2$ and $v_4$ gives a graph with finite $d$ value, but with diameter $2$. This means that $G$ must strictly contain this graph and fall into another case. Finally, we consider adding a pendant to $v_2$. This graph has finite $d$ value and is part of the one exceptional family in Figure~\ref{fig:exception_family}. So if $G$ contains (a) as a strict subgraph and is not part of the family given in Figure~\ref{fig:exception_family}, then $d(G) = \infty$ and $G$ accumulates $C_5$. 

Next, we consider graph (b). Adding an edge between $v_1$ and $v_3$ or $v_1$ and $v_4$ gives a path of length 5. An additional edge between $v_1$ and $v_5$ gives $C_5$ as a subgraph. Adding an edge between $v_2$ and $v_5$ results in the Bowtie Graph with a pendant off the center vertex. The jump graph of this graph has the Stickman as a subgraph (see graph (viii) and its jump graph in Figure~\ref{fig:dissipate_diam2}). If we add an edge from $v_3$ to $v_4$, we have a path of length 5. Finally, an edge between $v_3$ and $v_5$ results in a graph with the Bug as a subgraph. Thus, if $G$ contains graph (b) as a strict subgraph, it will have infinite $d$ value and accumulate $C_5$.

Now we consider ways that $G$ can have graph (c) as a strict subgraph. Adding an edge between two white vertices will give a path of length 5, and adding an edge between $v_1$ and $v_3$ also gives a path of length 5. If we connect $v_1$ and $v_4$ with an edge, we have $K_{2,3}$ as a snipped subgraph. Connecting $v_1$ and $v_5$ with an edge gives $C_5$ as a subgraph. Lastly, if we add an edge between $v_2$ and $v_5$ or $v_3$ and $v_5$ we have the Bug as a subgraph. This tells us that if $G$ contains graph (c) as a strict subgraph, it will be $d$-infinite and accumulate $C_5$.

Lastly, we check the graph (d) case. If we add an edge between $v_1$ and $v_3$, the resulting graph is $K_4$ with a pendant edge off one vertex. The jump graph of this graph has $K_{2,3}$ as a snipped subgraph. Connecting $v_1$ and $v_4$ with an edge gives $K_{2,3}$ as a subgraph. Finally, if we add an edge between $v_2$ and $v_4$, we have $C_5$ as a subgraph. Hence, any $G$ with graph (d) as a strict subgraph will have $d(G) = \infty$ and accumulate $C_5$.

This concludes the proof for the diameter $3$ case. If $G$ strictly contains any of graphs (a) through (d) and $G$ is not part of the family given in Figure~\ref{fig:exception_family}, then it will have infinite $d$ value and accumulate $C_5$. 
\end{proof}

\begin{lem} \label{Lemma: diam=2}
Suppose $G$ has a diameter of 2. If $d(G) = \infty$, then $G$ will accumulate $C_5$. Furthermore, if $d(G)$ is finite then $G$ is one of the graphs in Figure~\ref{fig:dissipate_diam2}.
\end{lem}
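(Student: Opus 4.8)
The plan is to combine the complete catalogue of $d$-finite graphs in Figure~\ref{fig:dissipating_graphs} with a structural argument that every remaining diameter-$2$ graph accumulates $C_5$. For the ``furthermore'' clause I would simply read off Figure~\ref{fig:dissipating_graphs}: a diameter-$2$ graph has finite $d$ value if and only if it appears in the dissipating tree, so the $d$-finite diameter-$2$ graphs are exactly the diameter-$2$ members of that tree (including the star family $S_n$ and the other small families), and these are precisely what is collected into Figure~\ref{fig:dissipate_diam2}. Thus the second sentence of the lemma becomes a matter of inspection once the first sentence is established.

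The real content is the first sentence: a diameter-$2$ graph $G$ with $d(G)=\infty$ accumulates $C_5$. My strategy is to isolate a small family of ``obstruction'' graphs, namely $C_5$, $K_{2,3}$, the Bug, the Stickman, and the Pendulum, each of which was shown in Section~\ref{Section 4.2} to be $d$-infinite and to accumulate $C_5$. By Corollary~\ref{Corollary: snipped_d=infinity} and Lemma~\ref{Lemma: H_snipped_of_G, J(H)_in_J(G)}, it suffices to prove that every diameter-$2$ graph $G$ not listed in Figure~\ref{fig:dissipate_diam2} has one of these obstructions as a snipped subgraph. Note that $C_5$ and $K_{2,3}$ themselves have diameter $2$, so they are natural obstructions to search for inside a diameter-$2$ graph. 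The crucial point, and the reason the conclusion is accumulation of $C_5$ rather than merely $C_5$ or $N$, is that the net graph $N$ has diameter $3$ and never need be invoked here.

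To locate an obstruction I would run casework organized by the number of edges, mirroring the proofs of Lemmas~\ref{Lemma: diam=4} and~\ref{Lemma: diam=3}. I fix non-adjacent vertices $A$ and $B$ with a common neighbor, giving a length-$2$ shortest path, and add edges subject to the diameter-$2$ constraint that every non-adjacent pair retains a common neighbor. For graphs with few edges I would compare against the finite enumeration from \cite{Steinbach} and check each one against Figure~\ref{fig:dissipating_graphs}. For graphs with more edges I would force an obstruction directly: whenever two vertices share three common neighbors, the six incident edges already form $K_{2,3}$ as a subgraph, and whenever a suitable cycle of pairwise disconnections appears we recover $C_5$ as a snipped subgraph after gluing. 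The degree and common-neighborhood counting that guarantees one of these configurations once enough edges are present is the engine of the argument.

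The step I expect to be the main obstacle is exactly this density handling. In every earlier diameter case the bounded diameter capped how spread out the graph could be and a long shortest path did most of the work; here a diameter-$2$ graph can be edge-dense while containing no path longer than a few edges, so the long-path trick of Lemma~\ref{Lemma: diam>=5} is unavailable. The challenge is therefore twofold: to show that sufficient edge density in a diameter-$2$ graph always forces $K_{2,3}$ or $C_5$ as a snipped subgraph via a neighborhood-overlap argument, and to keep the finitely many sparse exceptional cases small enough to enumerate and match against Figure~\ref{fig:dissipate_diam2}. Verifying that no exceptional diameter-$2$ graph slips through---being $d$-infinite yet failing to accumulate $C_5$---is where the care is required.
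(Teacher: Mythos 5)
Your overall strategy---reduce to finding one of $C_5$, $K_{2,3}$, the Bug, the Stickman, or the Pendulum as a snipped subgraph, then invoke Corollary~\ref{Corollary: snipped_d=infinity} and Lemma~\ref{Lemma: H_snipped_of_G, J(H)_in_J(G)}---is the same obstruction-hunting idea the paper uses, and your observation that two vertices with three common neighbors yield $K_{2,3}$ as a subgraph is correct and useful. But there is a genuine gap: the step you yourself flag as ``the engine of the argument,'' namely the density/common-neighborhood count that forces an obstruction in edge-dense diameter-$2$ graphs, is never supplied. In particular, after your $K_{2,3}$ observation you are left with the case in which every pair of vertices has at most two common neighbors (and every non-adjacent pair has at least one), and nothing in the proposal handles this regime; it is not covered by enumeration from \cite{Steinbach} either, since such graphs can have arbitrarily many vertices and edges. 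Organizing the casework by edge count starting from a length-$2$ path, as in the diameter-$3$ and diameter-$4$ lemmas, does not terminate here for exactly the reason you identify, so the proposal as written is a plan with its hardest step deferred rather than a proof.

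The paper closes this gap by organizing the casework around cycle structure instead of edge count. If $G$ is acyclic it is a star and dissipates. If $G$ contains a cycle of length at least $5$, then $C_5$ is a snipped subgraph immediately. If $C_4 \subseteq G$, only finitely many edge-additions to the $C_4$ are compatible with diameter $2$ before one of a short list of obstruction-containing configurations (graphs (ii)--(v) of Figure~\ref{fig:diam2_C4}) must appear as a subgraph, so the analysis stops at $8$ edges. If the only cycles are triangles, the diameter-$2$ constraint forces a windmill-like structure (triangles through a common vertex plus pendants at that vertex), which is either the Bowtie, family III of Figure~\ref{fig:dissipate_diam2}, or contains graph (viii) whose jump graph contains the Stickman. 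This decomposition is what tames the dense case that your approach leaves open. One smaller point: your justification that $N$ ``never need be invoked'' because it has diameter $3$ is not a valid argument---a diameter-$2$ graph can certainly contain $N$ as a subgraph (e.g., $N$ plus a universal vertex)---although the conclusion happens to be consistent with the paper's proof, which only ever produces $C_5$-accumulating obstructions in this case.
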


\begin{figure}[H]
  \centering
  \begin{minipage}[b]{0.2\textwidth}\centering
    \begin{tikzpicture}[line cap=round,line join=round,>=triangle 45,x=0.5cm,y=0.5cm]
    \draw [line width=1pt] (-1,2)-- (1,2);
    \draw [line width=1pt] (1,2)-- (1,0);
    \draw [line width=1pt] (-1,0)-- (1,0);
    \draw [line width=1pt] (-1,2)-- (-1,0);
    \begin{scriptsize}
    \draw [fill=black] (-1,2) circle (1.8pt);
    \draw [fill=black] (1,2) circle (1.8pt);
    \draw [fill=black] (1,0) circle (1.8pt);
    \draw [fill=black] (-1,0) circle (1.8pt);
    \end{scriptsize}
    \end{tikzpicture}
    \caption*{\large $C_4$}
  \end{minipage}
    \begin{minipage}[b]{0.2\textwidth}\centering
    \begin{tikzpicture}[line cap=round,line join=round,>=triangle 45,x=0.45cm,y=0.45cm]
    \draw [line width=1pt] (-2,1)-- (-2,-1);
    \draw [line width=1pt] (0,0)-- (-2,-1);
    \draw [line width=1pt] (-2,1)-- (0,0);
    \draw [line width=1pt] (0,0)-- (2,1);
    \draw [line width=1pt] (2,1)-- (2,-1);
    \draw [line width=1pt] (2,-1)-- (0,0);
    \begin{scriptsize}
    \draw [fill=black] (-2,1) circle (1.8pt);
    \draw [fill=black] (-2,-1) circle (1.8pt);
    \draw [fill=black] (0,0) circle (1.8pt);
    \draw [fill=black] (2,1) circle (1.8pt);
    \draw [fill=black] (2,-1) circle (1.8pt);
    \end{scriptsize}
    \end{tikzpicture}
    \caption*{\large Bowtie Graph}
  \end{minipage}
    \begin{minipage}[b]{0.2\textwidth}\centering
    \begin{tikzpicture}[line cap=round,line join=round,>=triangle 45,x=0.4cm,y=0.4cm]
    \draw [line width=1pt,dash pattern=on 2pt off 2pt] (0,0)-- (-2,-1);
    \draw [line width=1pt] (0,2)-- (0,0);
    \draw [line width=1pt,dash pattern=on 2pt off 2pt] (0,0)-- (2,1);
    \draw [line width=1pt,dash pattern=on 2pt off 2pt] (2,-1)-- (0,0);
    \draw [line width=1pt,dash pattern=on 2pt off 2pt] (0,0)-- (0,-2);
    \draw [line width=1pt] (0,0)-- (-2,1);
    \begin{scriptsize}
    \draw [fill=black] (0,2) circle (1.8pt);
    \draw [fill=black] (-2,-1) circle (1.8pt);
    \draw [fill=black] (0,0) circle (1.8pt);
    \draw [fill=black] (2,1) circle (1.8pt);
    \draw [fill=black] (2,-1) circle (1.8pt);
    \draw [fill=black] (0,-2) circle (1.8pt);
    \draw [fill=black] (-2,1) circle (1.8pt);
    \end{scriptsize}
    \end{tikzpicture}
    \caption*{\large $S_n$}
  \end{minipage}
  \\ \vspace{20pt}
  \begin{minipage}[b]{0.2\textwidth}\centering
    \begin{tikzpicture}[line cap=round,line join=round,>=triangle 45,x=0.5cm,y=0.5cm]
    \draw [line width=1pt] (-1,2)-- (1,2);
    \draw [line width=1pt] (1,2)-- (1,0);
    \draw [line width=1pt] (-1,0)-- (1,0);
    \draw [line width=1pt] (-1,2)-- (-1,0);
    \draw [line width=1pt] (-1,2)-- (1,0);
    \begin{scriptsize}
    \draw [fill=black] (-1,2) circle (1.8pt);
    \draw [fill=black] (1,2) circle (1.8pt);
    \draw [fill=black] (1,0) circle (1.8pt);
    \draw [fill=black] (-1,0) circle (1.8pt);
    \end{scriptsize}
    \end{tikzpicture}
    \caption*{\large I}
  \end{minipage}
    \begin{minipage}[b]{0.2\textwidth}\centering
    \begin{tikzpicture}[line cap=round,line join=round,>=triangle 45,x=0.5cm,y=0.5cm]
    \draw [line width=1pt] (4,2)-- (6,2);
    \draw [line width=1pt] (6,2)-- (6,0);
    \draw [line width=1pt] (4,0)-- (6,0);
    \draw [line width=1pt] (4,2)-- (4,0);
    \draw [line width=1pt] (2.5,0)-- (4,0);
    \draw [line width=1pt] (6,2)-- (4,0);
    \begin{scriptsize}
    \draw [fill=black] (4,2) circle (1.8pt);
    \draw [fill=black] (6,2) circle (1.8pt);
    \draw [fill=black] (4,0) circle (1.8pt);
    \draw [fill=black] (2.5,0) circle (1.8pt);
    \draw [fill=black] (6,0) circle (1.8pt);
    \end{scriptsize}
    \end{tikzpicture}
    \caption*{\large II}
  \end{minipage}
    \begin{minipage}[b]{0.2\textwidth}\centering
    \begin{tikzpicture}[line cap=round,line join=round,>=triangle 45,x=0.5cm,y=0.5cm]
    \draw [line width=1pt] (-6,0)-- (-5,1);
    \draw [line width=1pt] (-5,1)-- (-4,0);
    \draw [line width=1pt] (-4,0)-- (-6,0);
    \draw [line width=1pt,dash pattern=on 2pt off 2pt] (-6,0)-- (-6,-1);
    \draw [line width=1pt] (-7.5,0)-- (-6,0);
    \draw [line width=1pt,dash pattern=on 2pt off 2pt] (-7,-1)-- (-6,0);
    \begin{scriptsize}
    \draw [fill=black] (-6,0) circle (1.8pt);
    \draw [fill=black] (-5,1) circle (1.8pt);
    \draw [fill=black] (-4,0) circle (1.8pt);
    \draw [fill=black] (-7,-1) circle (1.8pt);
    \draw [fill=black] (-7.5,0) circle (1.8pt);
    \draw [fill=black] (-6,-1) circle (1.8pt);
    \end{scriptsize}
    \end{tikzpicture}
    \caption*{\large III}
  \end{minipage}
  \caption{All graphs and families of graphs with a diameter of 2 which dissipate.}\label{fig:dissipate_diam2}
\end{figure}

\begin{proof}
We begin by pointing the reader to Figure~\ref{fig:dissipating_graphs} to check that all graphs in Figure~\ref{fig:dissipate_diam2} have finite $d$ value.

If $G$ has no cycles then it must be a star graph. All star graphs have a finite $d$ value, namely $d(S_n) = 2$. The family of star graphs is listed in Figure~\ref{fig:dissipate_diam2}.

If $G$ has a cycle $C_n$ with $n \geq 5$, then $G$ will have $C_5$ as a snipped subgraph. By applying Lemma \ref{Lemma: H_snipped_of_G, J(H)_in_J(G)}, we know that $d(G) = \infty$ and $G$ will accumulate $C_5$.

Now suppose that $G$ has $C_4$ as a subgraph. If $G = C_4$, then it has finite $d$ value and is listed in Figure~\ref{fig:dissipate_diam2}. If $C_4 \subseteq G$ and $G$ has exactly 5 edges then $G$ must be graph I in Figure~\ref{fig:dissipate_diam2} due to the diameter constraint, so again $G$ has finite $d$ value. 

We then suppose that $C_4 \subseteq G$ and $G$ has exactly $6$ edges. All options for $G$ are found by considering ways to add two edges to $C_4$ while maintaining a diameter of $2$. We find that $G$ is one of graphs (i), (ii), or (iii) in Figure~\ref{fig:diam2_C4}. Graph (i) has finite $d$ value and appears in Figure~\ref{fig:dissipate_diam2} as graph II. Graph (ii) has $C_5$ as a subgraph and graph (iii) is $K_{2,3}$ so both of these graphs have infinite $d$ value and accumulate $C_5$. 

Next suppose that $C_4 \subseteq G$ and $G$ has exactly 7 edges. If $G$ has graph (ii) or graph (iii) as a subgraph, then it will have infinite $d$ value and accumulate $C_5$. We consider all the ways to add three edges to $C_4$ without obtaining one of these subgraphs. There are two possibilities for $G$: graph (iv) and graph (v) in Figure~\ref{fig:diam2_C4}. The jump graph of (iv) has $K_{2,3}$ as a snipped subgraph so (iv) has infinite $d$ value and accumulates $C_5$. Graph (v) has the Bug as a subgraph so (v) also has infinite $d$ value and accumulates $C_5$. 

Finally, suppose that $C_4 \subseteq G$ and $G$ has 8 or more edges. We consider all the ways to add four edges to $C_4$ while maintaining a diameter of $2$. Any way we do this, we acquire one of graphs (ii), (iii), (iv), or (v) as a subgraph. Thus, $G$ will have infinite $d$ value and accumulate $C_5$.

\begin{figure}
  \centering
  \begin{minipage}[b]{0.2\textwidth}\centering
  \begin{tikzpicture}[line cap=round,line join=round,>=triangle 45,x=0.7cm,y=0.7cm]
    \draw [line width=1pt] (4,2)-- (6,2);
    \draw [line width=1pt] (6,2)-- (6,0);
    \draw [line width=1pt] (4,0)-- (6,0);
    \draw [line width=1pt] (4,2)-- (4,0);
    \draw [line width=1pt] (2.5,0)-- (4,0);
    \draw [line width=1pt] (6,2)-- (4,0);
    \begin{scriptsize}
    \draw [fill=black] (4,2) circle (1.8pt);
    \draw [fill=black] (6,2) circle (1.8pt);
    \draw [fill=black] (4,0) circle (1.8pt);
    \draw [fill=black] (2.5,0) circle (1.8pt);
    \draw [fill=black] (6,0) circle (1.8pt);
    \end{scriptsize}
    \end{tikzpicture}
    \caption*{\large (i)}
  \end{minipage}
 \begin{minipage}[b]{0.2\textwidth}\centering
     \begin{tikzpicture}[line cap=round,line join=round,>=triangle 45,x=0.7cm,y=0.7cm]
    \draw [line width=1pt] (8,2)-- (10,2);
    \draw [line width=1pt] (10,2)-- (10,0);
    \draw [line width=1pt] (8,0)-- (10,0);
    \draw [line width=1pt] (8,2)-- (8,0);
    \draw [line width=1pt] (10,2)-- (11,1);
    \draw [line width=1pt] (11,1)-- (10,0);
    \begin{scriptsize}
    \draw [fill=black] (8,2) circle (1.8pt);
    \draw [fill=black] (10,2) circle (1.8pt);
    \draw [fill=black] (10,0) circle (1.8pt);
    \draw [fill=black] (8,0) circle (1.8pt);
    \draw [fill=black] (11,1) circle (1.8pt);
    \end{scriptsize}
    \end{tikzpicture}
    \caption*{\large (ii)}
  \end{minipage}           \begin{minipage}[b]{0.2\textwidth}\centering
    \begin{tikzpicture}[line cap=round,line join=round,>=triangle 45,x=0.7cm,y=0.7cm]
    \draw [line width=1pt] (12,2)-- (14,2);
    \draw [line width=1pt] (14,2)-- (14,0);
    \draw [line width=1pt] (12,0)-- (14,0);
    \draw [line width=1pt] (12,2)-- (12,0);
    \draw [line width=1pt] (12,0)-- (13,1);
    \draw [line width=1pt] (13,1)-- (14,2);
    \begin{scriptsize}
    \draw [fill=black] (12,2) circle (1.8pt);
    \draw [fill=black] (14,2) circle (1.8pt);
    \draw [fill=black] (14,0) circle (1.8pt);
    \draw [fill=black] (12,0) circle (1.8pt);
    \draw [fill=black] (13,1) circle (1.8pt);
    \end{scriptsize}
    \end{tikzpicture}
    \caption*{\large (iii)}
  \end{minipage} \\
  \vspace{10pt}
  \begin{minipage}[b]{0.2\textwidth}\centering
    \begin{tikzpicture}[line cap=round,line join=round,>=triangle 45,x=0.7cm,y=0.7cm]
    \draw [line width=1pt] (0,2)-- (2,2);
    \draw [line width=1pt] (2,2)-- (2,0);
    \draw [line width=1pt] (0,0)-- (2,0);
    \draw [line width=1pt] (0,2)-- (0,0);
    \draw [line width=1pt] (-1.5,0)-- (0,0);
    \draw [line width=1pt] (2,2)-- (0,0);
    \draw [line width=1pt] (2,0)-- (0,2);
    \begin{scriptsize}
    \draw [fill=black] (0,2) circle (1.8pt);
    \draw [fill=black] (2,2) circle (1.8pt);
    \draw [fill=black] (0,0) circle (1.8pt);
    \draw [fill=black] (2,0) circle (1.8pt);
    \draw [fill=black] (-1.5,0) circle (1.8pt);
    \end{scriptsize}
    \end{tikzpicture}
    \caption*{\large (iv)}
  \end{minipage}
    \begin{minipage}[b]{0.2\textwidth}\centering
    \begin{tikzpicture}[line cap=round,line join=round,>=triangle 45,x=0.7cm,y=0.7cm]
    \draw [line width=1pt] (0,2)-- (2,2);
    \draw [line width=1pt] (2,2)-- (2,0);
    \draw [line width=1pt] (0,0)-- (2,0);
    \draw [line width=1pt] (0,2)-- (0,0);
    \draw [line width=1pt] (-1.5,0)-- (0,0);
    \draw [line width=1pt] (2,2)-- (0,0);
    \draw [line width=1pt] (0,0)-- (-1.5,1);
    \begin{scriptsize}
    \draw [fill=black] (0,2) circle (1.8pt);
    \draw [fill=black] (2,2) circle (1.8pt);
    \draw [fill=black] (2,0) circle (1.8pt);
    \draw [fill=black] (0,0) circle (1.8pt);
    \draw [fill=black] (-1.5,0) circle (1.8pt);
    \draw [fill=black] (-1.5,1) circle (1.8pt);
    \end{scriptsize}
    \end{tikzpicture}
    \caption*{\large (v)}
  \end{minipage}
  \caption{Graphs appearing in the $C_4$ casework of the proof of Lemma~\ref{Lemma: diam=2}.}\label{fig:diam2_C4}
\end{figure}

\begin{figure}
  \centering
    \begin{minipage}[b]{0.3\textwidth}\centering
    \begin{tikzpicture}[line cap=round,line join=round,>=triangle 45,x=0.8cm,y=0.8cm]
    \draw [line width=1pt] (-6,0)-- (-5,1);
    \draw [line width=1pt] (-5,1)-- (-4,0);
    \draw [line width=1pt] (-4,0)-- (-6,0);
    \draw [line width=1pt] (-7.5,0)-- (-6,0);
    \begin{scriptsize}
    \draw [fill=black] (-5,1) circle (1.8pt);
    \draw [fill=black] (-4,0) circle (1.8pt);
    \draw [fill=black] (-7.5,0) circle (1.8pt);
    \draw [fill=black] (-6,0) circle (1.8pt);
    \draw[color=black] (-6.1,-0.3) node {$v$};
    \end{scriptsize}
    \end{tikzpicture}
    \caption*{\large (vii)}
  \end{minipage}
      \begin{minipage}[b]{0.3\textwidth}\centering
    \begin{tikzpicture}[line cap=round,line join=round,>=triangle 45,x=0.8cm,y=0.8cm]
    \draw [line width=1pt] (-6,0)-- (-5,1);
    \draw [line width=1pt] (-5,1)-- (-4,0);
    \draw [line width=1pt] (-4,0)-- (-6,0);
    \draw [line width=1pt,color=gray,dash pattern=on 2pt off 2pt] (-6,0)-- (-6,1.5);
    \draw [line width=1pt] (-7.5,0)-- (-6,0);
    \draw [line width=1pt,color=gray,dash pattern=on 2pt off 2pt] (-7,-1)-- (-6,0);
    \draw [line width=1pt,color=gray,dash pattern=on 2pt off 2pt] (-7,-1)-- (-7.5,0);
    \begin{scriptsize}
    \draw [fill=black] (-6,0) circle (1.8pt);
    \draw [fill=black] (-5,1) circle (1.8pt);
    \draw [fill=black] (-4,0) circle (1.8pt);
    \draw [fill=gray] (-7,-1) circle (1.8pt);
    \draw [fill=black] (-7.5,0) circle (1.8pt);
    \draw [fill=gray] (-6,1.5) circle (1.8pt);
    \draw[color=black] (-5.8,-0.3) node {$v$};
    \end{scriptsize}
    \end{tikzpicture}
    \caption*{Possible additions to (vii)}
  \end{minipage} \\
  
      \begin{minipage}[b]{0.3\textwidth}\centering
        \begin{tikzpicture}[line cap=round,line join=round,>=triangle 45,x=0.5cm,y=0.5cm]
        \draw [line width=1pt] (-2,1)-- (-2,-1);
        \draw [line width=1pt] (0,0)-- (-2,-1);
        \draw [line width=1pt] (-2,1)-- (0,0);
        \draw [line width=1pt] (0,0)-- (2,1);
        \draw [line width=1pt] (2,1)-- (2,-1);
        \draw [line width=1pt] (2,-1)-- (0,0);
        \draw [line width=1pt] (0,0)-- (0,2);
        \begin{scriptsize}
        \draw [fill=black] (-2,1) circle (1.8pt);
        \draw [fill=black] (-2,-1) circle (1.8pt);
        \draw [fill=black] (0,0) circle (1.8pt);
        \draw [fill=black] (2,1) circle (1.8pt);
        \draw [fill=black] (2,-1) circle (1.8pt);
        \draw [fill=black] (0,2) circle (1.8pt);
        \end{scriptsize}
        \end{tikzpicture}
    \caption*{\large (viii)}
  \end{minipage}
   \begin{minipage}[b]{0.3\textwidth}\centering
        \begin{tikzpicture}[line cap=round,line join=round,>=triangle 45,x=0.7cm,y=0.7cm]
        \draw [line width=1pt] (-6,0)-- (-5,1);
        \draw [line width=1pt] (-5,1)-- (-4,0);
        \draw [line width=1pt] (-4,0)-- (-6,0);
        \draw [line width=1pt] (-7,0.5)-- (-6,0);
        \draw [line width=1pt] (-7,-0.5)-- (-6,0);        \draw [line width=1pt] (-3,0.5)-- (-4,0);
        \draw [line width=1pt] (-3,-0.5)-- (-4,0);
        \begin{scriptsize}
        \draw [fill=black] (-5,1) circle (1.8pt);
        \draw [fill=black] (-4,0) circle (1.8pt);
        \draw [fill=black] (-7,-0.5) circle (1.8pt);
        \draw [fill=black] (-7,0.5) circle (1.8pt);
        \draw [fill=black] (-3,-0.5) circle (1.8pt);
        \draw [fill=black] (-3,0.5) circle (1.8pt);
        \draw [fill=black] (-6,0) circle (1.8pt);
        \end{scriptsize}
        \end{tikzpicture}
    \caption*{ Jump graph of (viii)}
  \end{minipage}
  \caption{Graphs appearing in the $C_3$ casework of the proof of Lemma~\ref{Lemma: diam=2}.}\label{fig:diam2_C3}
\end{figure}

Next, we assume that $G$ has $C_3$ as a subgraph and no $C_4$. Note that $\text{diam}(C_3) = 1 \neq 2$ so $G$ must strictly contain $C_3$. This implies $G$ has graph (vii) in Figure~\ref{fig:diam2_C3} as a subgraph. We consider what additions we can make to (vii) which preserve a diameter of $2$ and do not create a cycle of length $4$ or more. With these conditions, we are limited to adding pendant edges to vertex $v$ or adding an edge between the leaf vertices of two pendants to create another $C_3$. These additions are shown in Figure~\ref{fig:diam2_C3}. If no additional $C_3$'s are made, then $G$ is in family III in Figure~\ref{fig:dissipate_diam2} and it dissipates. If one additional $C_3$ is made and there are no pendant edges, then $G$ is the Bowtie Graph in Figure~\ref{fig:dissipate_diam2}. In all other cases, $G$ must have graph (viii) in Figure~\ref{fig:diam2_C3} as a subgraph. The jump graph of (iii) contains the Stickman as a subgraph so it has infinite $d$ value and accumulates $C_5$. Hence, in this case $G$ also has $d(G) = \infty$ and accumulates $C_5$.
%
%
\end{proof}

\section{Culminating Results}\label{Section 5}

In Subsection \ref{Section 4.3}, we showed that every connected $d$-infinite graph of diameter 2 or more will accumulate $C_5$ or $N$. In the following theorem, we will bring Lemmas \ref{Lemma: diam>=5}, \ref{Lemma: diam=4}, \ref{Lemma: diam=3}, and \ref{Lemma: diam=2} together to show that every $d$-infinite graph accumulates $C_5$ or $N$.

\begin{thm}[Accumulation Theorem] \label{Theorem: accumulating} 
A graph $G$ is $d$-infinite if and only if $G$ accumulates $C_5$ or $N$.
\end{thm}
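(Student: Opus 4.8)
The plan is to assemble the four diameter lemmas into the stated equivalence and then fill in the diameter classes they do not cover. The overall structure is: prove the easy implication directly, then prove the converse by a case split on $\mathrm{diam}(G)$, invoking the lemmas where they apply and supplying short arguments for the three classes (diameter $0$, diameter $1$, and $\infty$) that fall outside their scope.

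The easy direction ($\Leftarrow$) I would dispatch first. If $G$ accumulates $C_5$, so that $C_5 \subseteq J^k(G)$ for some $k$, then applying Lemma~\ref{Lemma: J(H)_subgraph_of_J(G)} repeatedly together with the identity $J(C_5) = C_5$ gives $C_5 \subseteq J^{k+m}(G)$ for every $m \geq 0$; in particular no iterate is empty, so $d(G) = \infty$. The net graph is handled identically since $J(N) = N$.

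For the converse, I would split on diameter. When $G$ is connected with $\mathrm{diam}(G) \in \{2,3,4\}$ or $\mathrm{diam}(G) \geq 5$, Lemmas~\ref{Lemma: diam>=5}, \ref{Lemma: diam=4}, \ref{Lemma: diam=3}, and \ref{Lemma: diam=2} deliver the conclusion directly, so the real work lies in the uncovered classes. Diameter $0$ cannot occur, since a single vertex has $d = 1$; and diameter $1$ means $G = K_n$, where $K_2$, $K_3$, and $K_4$ are $d$-finite (cf.\ Figure~\ref{fig:dissipating_graphs}), so $d$-infiniteness forces $n \geq 5$, whence $C_5 \subseteq K_n$ and $G$ accumulates $C_5$ immediately.

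I expect the disconnected case ($\mathrm{diam}(G) = \infty$) to be the main obstacle, precisely because the diameter lemmas are proved only for connected graphs. My intended workaround is the observation that edges lying in distinct components are never incident, so for a disconnected $G$ with nontrivial components $G_1, \dots, G_r$, $r \geq 2$ (after discarding isolated vertices, which do not affect $J$), one has $J(G) = J(G_1) + \cdots + J(G_r)$, the join of the component jump graphs. A join of nonempty graphs is connected, so $J(G)$ is connected, and it is $d$-infinite because $d(J(G)) = d(G) - 1 = \infty$. Applying the already-established connected case to $J(G)$ then produces some $m$ with $C_5 \subseteq J^m(J(G)) = J^{m+1}(G)$ or $N \subseteq J^{m+1}(G)$, so $G$ accumulates $C_5$ or $N$. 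The only points needing careful verification here are the join identity and the harmlessness of deleting isolated vertices, both of which are routine.
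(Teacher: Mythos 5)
Your proposal is correct and follows essentially the same route as the paper: the easy direction via persistence of $C_5$ and $N$ under $J$, and the converse via the diameter lemmas for connected graphs, the $K_n$ analysis for diameter $1$, and the observation that $J$ of a disconnected graph (with isolated vertices discarded) is connected, so the connected case can be applied to $J(G)$. Your packaging of that last step as the join identity $J(G) = J(G_1) + \cdots + J(G_r)$ is just a tidier phrasing of the paper's explicit path argument, not a different proof.
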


\begin{proof}
First, suppose that $G$ accumulates $C_5$ or $N$. Then there is some $k \geq 1$ such that $J^k(G)$ contains $C_5$ or $N$ as a subgraph. It follows directly from Corollary~\ref{Corollary: snipped_d=infinity} that $d(J^k(G)) = \infty$ and consequently, $d(G) = \infty$.

Next, assume that $G$ is $d$-infinite. We claim that it must accumulate $C_5$ or $N$. Suppose first that $G$ is connected. If $G$ has a diameter of 2 or more, then by Lemmas~\ref{Lemma: diam>=5}, \ref{Lemma: diam=4}, \ref{Lemma: diam=3}, and \ref{Lemma: diam=2}, it will accumulate $C_5$ or $N$. If $\text{diam}(G) = 1$, then $G = K_n$ for some $n$. For $n < 5$, we know $d(K_n)$ is finite as can be checked through explicit calculation (see Figure~\ref{fig:dissipating_graphs}). For $n \geq 5$, we observe that $K_n$ has $C_5$ as a subgraph. Thus, if $G$ has diameter $1$ and $d(G) = \infty$, it must have $C_5$ as a subgraph. Therefore, if $G$ is a connected graph and $d(G) = \infty$, then $G$ accumulates $C_5$ or $N$.

Now suppose $G$ is not a connected graph so it has at least two connected components. We will assume $G$ has no isolated vertices, implying that each component contains an edge. Let the connected components of $G$ be $\{H_i\}_{i = 1}^n$, such that $H_i \cap H_j = \emptyset$ if $i \neq j$ and $\bigcup_{i = 1}^n H_i = G$. We show that $J(G)$ is a connected graph.

Let $e_1$ and $e_2$ be vertices in $J(G)$ and we find a path from $e_1$ to $e_2$ in $J(G)$. Suppose first that the edges $e_1$ and $e_2$ are in different components in $G$ such that $e_1 \in H_i$ and $e_2 \in H_j$ for $i \neq j$. Then, $e_1$ and $e_2$ are non-incident in $G$ so the edge $\{e_1, e_2\}$ exists in $J(G)$ and this forms a path of length 1 from $e_1$ to $e_2$. Next, suppose that the edges $e_1$ and $e_2$ are both in $H_k$ for some $k$. Let $H_j$ be another component ($j \neq k)$ and let $e_j$ be an edge in $H_j$. Then $e_j$ is non-incident to both $e_1$ and $e_2$ so the edges $\{e_1, e_j\}$ and $\{e_j, e_2\}$ are in $J(G)$. These edges form a path of length 2 from vertex $e_1$ to vertex $e_2$ in $J(G)$. Therefore, $J(G)$ is a connected graph. By the argument above, $J(G)$ must accumulate $C_5$ or $N$ so $G$ does as well.
\end{proof}

Now that we know every $d$-infinite graph accumulates $C_5$ or $N$, we can ask about the end behavior of the sequence $\{J^k(G)\}$ for a given graph $G$. We know that if $G$ is $C_5$ or $N$, then the number of edges in $J^k(G)$ will stay constant as $k \to \infty$. But what happens for a $d$-infinite graph which is not $C_5$ or $N$? Is there such a $G$ where the number of edges in $J^k(G)$ is constant? Or is there some $G$ where $J^K(G) = G$ for $K > 1$, resulting in a cycle of iterated jump graphs? As it turns out, the answer to both these questions is ``no''. Every $d$-infinite graph which is not $C_5$ or $N$ will grow without bound under the jump graph operation. We will spend the rest of this paper proving this fascinating result.

\begin{lem} \label{Lemma: c5-P_strict_subgraph}
Suppose $G$ has $C_5$ or the net graph $N$ as a strict subgraph, then $\abs{E(J^k(G))} \to \infty$ as $k \to \infty$. \end{lem}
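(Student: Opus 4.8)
The plan is to control the edge count directly via a monotonicity argument, exploiting that $C_5$ and $N$ are fixed by $J$ (Theorem~\ref{Theorem: C5_P_first_jump}). Write $F$ for whichever of $C_5$, $N$ is a strict subgraph of $G$, and set $m_k = |E(J^k(G))|$. First I would record \emph{persistence}: since $F\subseteq G$, iterating Lemma~\ref{Lemma: J(H)_subgraph_of_J(G)} gives $F = J^k(F)\subseteq J^k(G)$ for every $k$, so each $J^k(G)$ contains $F$.

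The engine of the proof is the identity
\[ |E(J(H))| = \tfrac{1}{2}\,m(m+1) - \tfrac{1}{2}\sum_{v} d_v^2, \qquad m = |E(H)|, \]
obtained by counting, for each edge $e=\{u,w\}$, its $m+1-d_u-d_w$ non-incident partners and summing over $e$ (using $\sum_{e=\{u,w\}}(d_u+d_w)=\sum_v d_v^2$). From this, $|E(J(H))|\ge|E(H)|$ is equivalent to $\sum_v d_v^2 \le m(m-1)$. I would establish this inequality for every $H\supseteq F$ by induction on the edges added to $F$: inserting an edge $\{x,y\}$ raises $\sum_v d_v^2$ by $2(d_x+d_y+1)$ and raises $m(m-1)$ by $2m$, so the inequality is preserved as long as $d_x+d_y\le m-1$, i.e. as long as some current edge avoids both $x$ and $y$. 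Because no two vertices cover all edges of $F$ (the vertex cover number of $C_5$ and of $N$ is $3$) and $F$ survives in every intermediate graph, such an edge always exists; the base cases $F=C_5,N$ hold with equality. This yields the key \textbf{monotonicity}: $m_k$ is non-decreasing, and since $m_0=|E(G)|\ge 6$ we get $m_k\ge 6$ for all $k$, so by persistence each $J^k(G)$ \emph{strictly} contains $F$.

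Now suppose toward a contradiction that $m_k\not\to\infty$. Being non-decreasing and bounded, the integer sequence $m_k$ is eventually constant, say $m_k=L\ge 6$ for $k\ge K$. There are finitely many isomorphism classes of graphs with $L$ edges and no isolated vertices, so some class repeats among $\{J^k(G)\}_{k\ge K}$, producing a graph $H:=J^{k_1}(G)$ with $J^p(H)\cong H$ for some $p\ge 1$, where $H\supsetneq C_5$ or $H\supsetneq N$ and $|E(H)|=L$. Along this cycle every step satisfies $|E(J(H_i))|=|E(H_i)|$, so equality $\sum_v d_v^2=m(m-1)$ holds for each $H_i$.

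The main obstacle is ruling out such an equality cycle. The inductive argument shows equality is rigid: each edge beyond $F$ must join a non-adjacent pair with $d_x+d_y=m-1$ (maximally concentrated), so the equality graphs form a restricted family (for instance $C_5$ with chords added in a degree-controlled way, such as $C_5$ plus a chord or the fan on five vertices). The plan is to prove this family is \emph{not} closed under $J$, because applying $J$ destroys the concentration; the prototype is $J(C_5+\text{chord})=C_5+\text{pendant}$, and $C_5+\text{pendant}$ fails equality since $|E(J(C_5+\text{pendant}))|=8>6$. I would make this precise by characterizing the equality graphs containing $F$ and checking, using Lemma~\ref{Lemma: H_snipped_of_G, J(H)_in_J(G)} to locate growth-forcing snipped subgraphs (such as $C_5+\text{pendant}$, $K_{2,3}$, or the Bug) inside their images, that $J$ leaves the equality class within one or two steps. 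This contradicts the constancy of $m_k$ along the cycle, forcing $m_k\to\infty$. The monotonicity inequality and persistence are routine scaffolding; the genuine difficulty is this analysis of the equality case.
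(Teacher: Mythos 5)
Your monotonicity half is correct, and it is in fact a cleaner, more general version of what the paper does: the identity $|E(J(H))| = \binom{m}{2} - \sum_v \binom{d_v}{2}$, together with the fact that no two vertices cover all edges of $C_5$ or of $N$, does give $m_{k+1} \ge m_k$, and persistence of $F$ plus $m_0 > |E(F)|$ keeps the containment strict at every stage. (The paper obtains the same non-decrease informally, by noting that every edge of $G$ is non-incident to at least one edge of the embedded $C_5$, respectively at least two edges of the embedded $N$.) The problem is that everything after ``suppose toward a contradiction'' is a plan rather than a proof, and it is exactly the part that carries the content of the lemma. You correctly reduce to: no $J$-cycle can consist entirely of graphs strictly containing $F$ that satisfy the equality $\sum_v d_v^2 = m(m-1)$. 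But to rule this out you propose to ``characterize the equality graphs containing $F$'' and check that $J$ escapes the class within one or two steps, and you only verify the single instance $m=6$, where $C_5$ plus a chord is the unique equality graph and $J$ sends it to $C_5$ plus a pendant. The equality family is not that small: $C_5$ with two chords at a common vertex has degree sequence $(4,3,3,2,2)$, so $\sum_v d_v^2 = 42 = 7\cdot 6$ and it is again an equality graph; a characterization valid for all $m$, followed by a verification that $J$ leaves the family along any cycle, is real unfinished work. Moreover, the ``rigidity'' you invoke (each added edge must satisfy $d_x + d_y = m-1$ at insertion time) depends on the order of insertion and does not by itself pin down the family. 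So the lemma's actual conclusion --- unboundedness --- is not established.

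For comparison, the paper sidesteps the equality analysis entirely by a reduction: if $C_5 \subsetneq G$ then $\Gamma = C_5 + \text{chord}$ is a snipped subgraph of $G$, so $J^k(\Gamma) \subseteq J^k(G)$ by Lemma~\ref{Lemma: H_snipped_of_G, J(H)_in_J(G)}, and an induction with the explicitly computed base cases $J(\Gamma)$ and $J^2(\Gamma)$ shows $J^k(\Gamma) \subseteq J^{k+1}(\Gamma)$ with $|E(J^{k+1}(\Gamma))| \ge |E(J^k(\Gamma))| + 2$; the $N$ case is settled by the direct count $|E(J(G))| \ge |E(G)| + |E'|$ where $E'$ is the set of edges outside the copy of $N$. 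If you want to finish along your route, the cheapest repair is to graft in exactly such a step: once $m_k$ is eventually constant you have a $J$-cycle of graphs strictly containing $F$, and a single subfamily with provable strict growth (such as $\Gamma$ above) already contradicts constancy --- but that is the paper's argument, not the equality classification you outline.
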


\begin{proof}
First, suppose that $G$ has $C_5$ as a strict subgraph. We claim that the number of edges in $J^k(G)$ cannot decrease. Notice that every edge in $G$ must be non-incident to at least one edge in the $C_5$. Hence, for each edge in $G$, we add at least 1 to the edge count in $J(G)$. Therefore, $\abs{E(G)} \leq \abs{E(J(G))}$. If $G$ has $C_5$ as a strict subgraph, $J(G)$ will as well and so we apply this logic iteratively to conclude $\abs{E(J^{k}(G))} \leq \abs{E(J^{k+1}(G))}$ for all $k \geq 1$. To show that $\abs{E(J^k(G))} \to \infty$ as $k \to \infty$, we start by observing that if $G$ has $C_5$ as a strict subgraph, then $G$ must have the graph $\Gamma$ depicted in Figure~\ref{fig:G0} as a snipped subgraph.

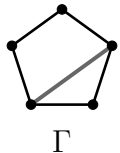
\begin{figure}[H]
    \centering
    \begin{tikzpicture}[line cap=round,line join=round,>=triangle 45,x=0.7cm,y=0.7cm]
    
    \draw [line width=1pt] (0,1)-- (-0.951,0.309);
    \draw [line width=1pt] (-0.951,0.309)-- (-0.588,-0.809);
    \draw [line width=1pt] (-0.588,-0.809)-- (0.588,-0.809);
    \draw [line width=1pt] (0.588,-0.809)-- (0.951,0.309);
    \draw [line width=1pt] (0.951,0.309)-- (0,1);
    \draw [line width=1pt] (0,1)-- (0,1);
    \draw [line width=1.5pt, color=gray] (-0.588,-0.809)-- (0.951,0.309);
    \begin{scriptsize}
    \draw [fill=black] (0,1) circle (1.8pt);
    \draw [fill=black] (-0.951,0.309) circle (1.8pt);
    \draw [fill=black] (-0.588,-0.809) circle (1.8pt);
    \draw [fill=black] (0.588,-0.809) circle (1.8pt);
    \draw [fill=black] (0.951,0.309) circle (1.8pt);
    \draw [color=black] (0, -1.5) node {\large $\Gamma$}; 
    \end{scriptsize}
    \end{tikzpicture}
    \caption{If $G$ has $C_5$ as a strict subgraph, it has $\Gamma$ as a snipped subgraph.}\label{fig:G0}
\end{figure}

Applying Lemma~\ref{Lemma: H_snipped_of_G, J(H)_in_J(G)} iteratively, we can conclude that $J^k(\Gamma) \subseteq J^k(G)$ for all $k \geq 1$. We will show that $\abs{E(J^k(\Gamma))} \to \infty$ as $k \to \infty$, and then this must also be true for $G$.

To do this, we will prove that for all $k \geq 1$, 
\begin{equation}\label{eq:induction1}
J^k(\Gamma) \subseteq J^{k + 1}(\Gamma)
\end{equation}
and
\begin{equation}\label{eq:induction2}
    \abs{E(J^{k + 1}(\Gamma))} \ge \abs{E(J^k(\Gamma))} + 2.
\end{equation}

We proceed by induction. The first and second jump graphs of $\Gamma$ are shown in Figure~\ref{fig:G0_m(G0)_m2(G0)}; these establish \eqref{eq:induction1} and \eqref{eq:induction2} for $k=1$.

Now assume that \eqref{eq:induction1} and \eqref{eq:induction2} hold for $k$ and we show they also hold for $k + 1$. Because $J^k(\Gamma)~\subseteq~J^{k + 1}(\Gamma)$, we apply Lemma~\ref{Lemma: H_snipped_of_G, J(H)_in_J(G)} to conclude that $J^{k + 1}(\Gamma) \subseteq J^{k + 2}(\Gamma)$ and this establishes that \eqref{eq:induction1} holds for $k+1$. Let $H$ be a subgraph of $J^{k+1}(\Gamma)$ that is isomorphic to $J^k(\Gamma)$ and let $E'$ be the set of all edges in $J^{k + 1}(\Gamma)$ which are not in $H$. As in the discussion in the first paragraph of this proof, every edge in $E'$ must add at least 1 to the total edge count of $J^{k+2}(\Gamma)$. Because equation \eqref{eq:induction2} holds for $k$, we know $\abs{E'} \geq 2$ and this gives
\[\abs{E(J^{k + 2}(\Gamma))} \geq \abs{E(J(H))} + \abs{E'} = \abs{E(J^{k + 1}(\Gamma))} + \abs{E'} = \abs{E(J^{k + 1}(\Gamma))} + 2\]
which establishes \eqref{eq:induction2} for $k+1$. By induction, we conclude that \eqref{eq:induction1} and \eqref{eq:induction2} are true for all $k\ge 1.$ Since the number of edges in the $k^{\text{th}}$ jump graph of $\Gamma$ is a strictly increasing sequence of natural numbers for $k \geq 1$, it must be unbounded. Hence, $\abs{E(J^k(\Gamma))} \to \infty$ as $k \to \infty$.

\begin{figure}[H]
  \centering
    \begin{minipage}[b]{0.3\textwidth}\centering
    \begin{tikzpicture}[line cap=round,line join=round,>=triangle 45,x=0.7cm,y=0.7cm]
    \draw [line width=1pt] (0,1)-- (-0.951,0.309);
    \draw [line width=1pt] (-0.951,0.309)-- (-0.588,-0.809);
    \draw [line width=1pt] (-0.588,-0.809)-- (0.588,-0.809);
    \draw [line width=1pt] (0.588,-0.809)-- (0.951,0.309);
    \draw [line width=1pt] (0.951,0.309)-- (0,1);
    \draw [line width=1pt] (0,1)-- (0,1);
    \draw [line width=1.5pt,color=gray] (-0.588,-0.809)-- (0.951,0.309);
    \begin{scriptsize}
    \draw [fill=black] (0,1) circle (1.8pt);
    \draw [fill=black] (-0.951,0.309) circle (1.8pt);
    \draw [fill=black] (-0.588,-0.809) circle (1.8pt);
    \draw [fill=black] (0.588,-0.809) circle (1.8pt);
    \draw [fill=black] (0.951,0.309) circle (1.8pt);
    \end{scriptsize}
    \end{tikzpicture}
    \caption*{\large $\Gamma$}
  \end{minipage}
    \begin{minipage}[b]{0.3\textwidth}\centering
    \begin{tikzpicture}[line cap=round,line join=round,>=triangle 45,x=0.7cm,y=0.7cm]
    \draw [line width=1pt] (5,1)-- (4.05,0.309);
    \draw [line width=1pt] (4.05,0.309)-- (4.42,-0.809);
    \draw [line width=1pt] (4.42,-0.809)-- (5.588,-0.809);
    \draw [line width=1pt] (5.588,-0.809)-- (5.951,0.309);
    \draw [line width=1pt] (5.951,0.309)-- (5,1);
    \draw [line width=1.5pt,color=gray] (6.680,0.680)-- (5.951,0.309);
    \begin{scriptsize}
    \draw [fill=black] (5,1) circle (1.8pt);
    \draw [fill=black] (4.05,0.309) circle (1.8pt);
    \draw [fill=black] (4.42,-0.809) circle (1.8pt);
    \draw [fill=black] (5.588,-0.809) circle (1.8pt);
    \draw [fill=black] (5.951,0.309) circle (1.8pt);
    \draw [fill=gray] (6.680,0.680) circle (1.8pt);
    \end{scriptsize}
    \end{tikzpicture}
    \caption*{\large $J(\Gamma)$}
  \end{minipage}
     \begin{minipage}[b]{0.3\textwidth}\centering
        \begin{tikzpicture}[line cap=round,line join=round,>=triangle 45,x=0.7cm,y=0.7cm]
        \draw [line width=1pt] (10,1)-- (9.05,0.309);
        \draw [line width=1pt] (9.05,0.309)-- (9.42,-0.809);
        \draw [line width=1pt] (9.42,-0.809)-- (10.588,-0.809);
        \draw [line width=1pt] (10.588,-0.809)-- (10.951,0.309);
        \draw [line width=1pt] (10.951,0.309)-- (10,1);
        \draw [line width=1.5pt,color=gray] (10, 0)-- (10.951,0.309);
        \draw [line width=1.5pt, color=gray] (10,0)-- (9.05,0.309);
        \draw [line width=1.5pt, color=gray] (10,0)-- (9.42,-0.809);
        \begin{scriptsize}
        \draw [fill=black] (10,1) circle (1.8pt);
        \draw [fill=black] (9.05,0.309) circle (1.8pt);
        \draw [fill=black] (9.42,-0.809) circle (1.8pt);
        \draw [fill=black] (10.588,-0.809) circle (1.8pt);
        \draw [fill=black] (10.951,0.309) circle (1.8pt);
        \draw [fill=gray] (10,0) circle (1.8pt);
    \end{scriptsize}
    \end{tikzpicture}
    \caption*{\large $J^2(\Gamma)$}
  \end{minipage}
  \caption{Graph $\Gamma$ and its first and second jump graphs.}\label{fig:G0_m(G0)_m2(G0)}
\end{figure}

In the other case, suppose that $G$ has $N$ as a strict subgraph and consider the number of edges in $J(G)$. We know $\abs{E(J(G))} \geq 6$ because $N \subseteq J(G)$. Now let $E' \subseteq E(G)$ be the set of edges in $G$ which are not in the subgraph $N$. Every edge $e \in E'$ must be non-incident to at least two edges in $N \subseteq G$ because of the structure of $N$. Hence, every $e \in E'$ will add at least $2$ to the total edge count of $J(G)$. This observation, and the fact that $\abs{E(G)} = \abs{E'} + 6$, means that 
\[\abs{E(J(G))} \geq 2\abs{E'} + \abs{E(N)} = 2\abs{E'} + 6 = \abs{E'} + \abs{E(G)}.\]
If $G$ has $N$ as a \emph{strict} subgraph, then $\abs{E'} \geq 1$ and so we have $\abs{E(J(G))} \geq \abs{E(G)} + 1$. Then notice that $J(G)$ must also have $N$ as a strict subgraph and we apply this logic again. Following this reasoning iteratively, we have $\abs{E(J^{k + 1}(G))} \geq \abs{E(J^k(G))} + 1$ for all $k \geq 1$. The number of edges in $J^k(G)$ is then a strictly increasing sequence of natural numbers, implying that it must be unbounded. Hence, we have $\abs{E(J^k(G))} \to \infty$ as $k \to \infty$.
\end{proof}

\begin{cor}
If $G$ has $C_5$ or the net graph $N$ as a snipped subgraph and $G \neq C_5$, $G \neq N$, then $\abs{E(J^k(G))} \to \infty$ as $k \to \infty$.
\end{cor}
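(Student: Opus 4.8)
The plan is to reduce the statement to Lemma~\ref{Lemma: c5-P_strict_subgraph}, which already treats the case of a \emph{strict} subgraph, by passing from $G$ to its jump graph $J(G)$. Suppose first that $G$ has $C_5$ as a snipped subgraph; the argument for $N$ will be identical. Since $C_5$ is fixed by the jump graph operation (Theorem~\ref{Theorem: C5_P_first_jump}), we have $J(C_5) = C_5$, so Lemma~\ref{Lemma: H_snipped_of_G, J(H)_in_J(G)} gives $C_5 = J(C_5) \subseteq J(G)$. Thus $J(G)$ contains $C_5$ as a subgraph, and the whole problem comes down to upgrading this to a \emph{strict} containment $C_5 \subsetneq J(G)$.

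Granting strictness for the moment, I would apply Lemma~\ref{Lemma: c5-P_strict_subgraph} to the graph $J(G)$ (which now has $C_5$ as a strict subgraph) to obtain $\abs{E(J^k(J(G)))} \to \infty$ as $k \to \infty$. Because $J^k(J(G)) = J^{k+1}(G)$, this says precisely that $\abs{E(J^{k+1}(G))} \to \infty$, which is the same as $\abs{E(J^k(G))} \to \infty$. So the corollary follows as soon as we know $J(G) \neq C_5$.

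The main obstacle is therefore ruling out the equality $J(G) = C_5$ under the hypothesis $G \neq C_5$. For this I would invoke the near-uniqueness of the backwards jump graph operation from the discussion preceding Lemma~\ref{Lemma: H_subgraph_of_G}: by Whitney's Graph Isomorphism Theorem, the only graphs with a non-unique jump graph are $C_3$ and $S_3$, which share the jump graph $J(C_3)$ consisting of three isolated vertices. Since $C_5 \neq J(C_3)$, any graph whose jump graph equals $C_5$ is determined uniquely up to isolated vertices; as $J(C_5) = C_5$, that graph must be $C_5$ itself. The assumption $G \neq C_5$ then forces $J(G) \neq C_5$, giving the needed strict containment. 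The net case runs in exactly the same way: $N = J(N) \subseteq J(G)$ by Theorem~\ref{Theorem: C5_P_first_jump} and Lemma~\ref{Lemma: H_snipped_of_G, J(H)_in_J(G)}, while $J(G) = N$ would force $G = N$ by the identical uniqueness argument (again $N \neq J(C_3)$), contradicting $G \neq N$. Thus in both cases $\abs{E(J^k(G))} \to \infty$.
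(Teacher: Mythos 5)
Your proposal is correct and follows essentially the same route as the paper: obtain $C_5 = J(C_5) \subseteq J(G)$ from Lemma~\ref{Lemma: H_snipped_of_G, J(H)_in_J(G)}, rule out $J(G) = C_5$ (resp.\ $N$) using the uniqueness of the graph with a given jump graph, and then apply Lemma~\ref{Lemma: c5-P_strict_subgraph} to $J(G)$. The only difference is cosmetic: you spell out the appeal to Whitney's theorem that the paper leaves implicit in the line ``we know that $J(G) \neq J(C_5) = C_5$.''
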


\begin{proof}
If $G$ has $C_5$ as a snipped subgraph, then $J(G)$ will have $C_5$ as a subgraph by Lemma~\ref{Lemma: H_snipped_of_G, J(H)_in_J(G)}. Assuming that $G$ differs from $C_5$ by more than isolated vertices, we know that $J(G) \neq J(C_5) = C_5$. Hence, we conclude that $J(G)$ has $C_5$ as a strict subgraph. Applying Lemma~\ref{Lemma: c5-P_strict_subgraph}, we come to the desired conclusion. The same argument can be applied to $N$.
\end{proof}

\begin{thm}[Exploding Graph Theorem] \label{Theorem: Exploding graph}
If $G$ has infinite $d$ value and $G$ is not $C_5$ or the net graph $N$, then $\abs{E(J^k(G))} \to \infty$ as $k \to \infty$.
\end{thm}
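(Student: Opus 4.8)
The plan is to derive the theorem almost entirely from the Accumulation Theorem (Theorem~\ref{Theorem: accumulating}) together with the Corollary to Lemma~\ref{Lemma: c5-P_strict_subgraph}, so that the whole argument reduces to excluding a single degenerate possibility. Since $G$ is $d$-infinite, Theorem~\ref{Theorem: accumulating} supplies an index $k_0 \geq 0$ with $C_5 \subseteq J^{k_0}(G)$ or $N \subseteq J^{k_0}(G)$. Writing $H := J^{k_0}(G)$, this graph has $C_5$ or $N$ as a subgraph, hence as a snipped subgraph. The Corollary then gives $\abs{E(J^k(H))} = \abs{E(J^{k+k_0}(G))} \to \infty$, and because a sequence of natural numbers diverges to infinity exactly when its tail does, this yields $\abs{E(J^k(G))} \to \infty$. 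The only hypothesis of the Corollary that is not immediate is that $H \neq C_5$ and $H \neq N$, so the entire proof hinges on verifying this.

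First I would clear the easy cross-cases by size and structure: $C_5$ has five vertices and its only cycles have length five, whereas $N$ has six vertices and its only cycle is a triangle, so neither of $C_5$, $N$ is a subgraph of the other. Thus if $C_5 \subseteq H$ then automatically $H \neq N$, and if $N \subseteq H$ then automatically $H \neq C_5$. What remains is to rule out $H = C_5$ in the $C_5$-accumulation case and $H = N$ in the $N$-accumulation case.

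The crux is an injectivity statement for the jump graph at these two fixed points: if $J(K) \cong C_5$ then $K \cong C_5$, and if $J(K) \cong N$ then $K \cong N$, in both cases up to isolated vertices. I would prove this exactly as in Lemma~\ref{Lemma: H_subgraph_of_G}, via Whitney's Graph Isomorphism Theorem. Using $J(K)^c = L(K)$ together with the facts $C_5^c = C_5$ (self-complementary) and $N^c = L(N)$ (which follows from $J(N) = N$), the hypothesis $J(K) = C_5$ becomes $L(K) = C_5$ and the hypothesis $J(K) = N$ becomes $L(K) = L(N)$. Since $L(K)$ is then connected, $K$ is connected up to isolated vertices; and since neither $C_5$ nor $L(N)$ equals the triangle $K_3 = L(C_3) = L(K_{1,3})$, the Whitney exception does not apply, so the connected line-graph preimage is unique, forcing $K \cong C_5$ and $K \cong N$ respectively.

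Finally I would use this injectivity to exclude $H \in \{C_5, N\}$ by a minimality argument. Suppose for contradiction that $J^{k_0}(G) = C_5$ and take $k_0$ minimal with this property. If $k_0 = 0$ then $G = C_5$, contradicting the hypothesis; if $k_0 \geq 1$ then $J(J^{k_0-1}(G)) = C_5$, so the injectivity statement gives $J^{k_0-1}(G) = C_5$, contradicting minimality. The identical argument excludes $H = N$. With $H \neq C_5$ and $H \neq N$ established, all hypotheses of the Corollary hold and the theorem follows. I expect the injectivity step to be the main obstacle, since it is the one place genuine input (Whitney's theorem and the complementation identities) is needed; the reduction and the size/structure comparisons are routine bookkeeping.
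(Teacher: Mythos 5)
Your proposal is correct and follows the same overall route as the paper: invoke the Accumulation Theorem to produce a $k_0$ with $C_5$ or $N$ inside $J^{k_0}(G)$, then feed that graph into Lemma~\ref{Lemma: c5-P_strict_subgraph} (via its corollary) to get unbounded edge growth. The one genuine difference is that you are more careful at the step the paper compresses into a single sentence. The paper asserts that ``since $G$ is not either of these graphs, there must be some $k$ such that $J^k(G)$ has $C_5$ or $N$ as a \emph{strict} subgraph,'' which silently excludes the possibility that $J^{k_0}(G)$ \emph{equals} $C_5$ or $N$ even though $G$ does not; as written, the paper gives no argument for this. Your injectivity claim --- that $J(K) \cong C_5$ forces $K \cong C_5$ and $J(K) \cong N$ forces $K \cong N$, proved via $J(K)^c = L(K)$, the identities $C_5^c = C_5$ and $N^c = L(N)$, and Whitney's theorem (with the $K_3$ exception checked) --- together with the minimality descent, closes exactly this gap, and it does so using only tools the paper already deploys in Lemma~\ref{Lemma: H_subgraph_of_G}. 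The cross-containment checks ($C_5 \not\subseteq N$, $N \not\subseteq C_5$) are also correct. The only caveat, which you already flag, is that Whitney determines $K$ only up to isolated vertices, so the descent really shows $G$ is $C_5$ or $N$ up to isolated vertices; this matches the paper's standing convention of ignoring isolated vertices and is the same (harmless) imprecision present in the theorem statement itself.
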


\begin{proof}
Suppose that $G$ is a graph and $d(G) = \infty$. Then, by the Accumulation Theorem (\ref{Theorem: accumulating}), we know that $G$ accumulates $C_5$ or the net graph $N$. Since $G$ is not either of these graphs, there must be some $k$ such that $J^k(G)$ has $C_5$ or $N$ as a strict subgraph. Hence, by Lemma \ref{Lemma: c5-P_strict_subgraph}, we know that $\abs{E(J^k(G))} \to \infty$ as $k \to \infty$.
\end{proof}

\begin{thm} \label{Theorem: J^k(G)=G}
For any non-empty graph $G$, the following are equivalent.
\begin{enumerate}[label=(\roman*)]
    \item $J(G) = G$.
    \item $J^k(G) = G$ for some $k$.
    \item $G$ is $C_5$ or $N$.
\end{enumerate}
\end{thm}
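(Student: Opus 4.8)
The plan is to prove the cycle of implications (iii) $\Rightarrow$ (i) $\Rightarrow$ (ii) $\Rightarrow$ (iii). The first two implications are short. For (iii) $\Rightarrow$ (i), if $G$ is $C_5$ or $N$ then Theorem~\ref{Theorem: C5_P_first_jump} gives $J(G) = L(G)^c = G$ directly, since the jump graph is the complement of the line graph. And (i) $\Rightarrow$ (ii) is immediate by taking $k = 1$. All the substance lies in (ii) $\Rightarrow$ (iii), which I would prove by contradiction using the Exploding Graph Theorem.

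For (ii) $\Rightarrow$ (iii), suppose $J^k(G) = G$ for some $k \geq 1$. First I would observe that iterating this identity gives $J^{mk}(G) = G$ for every $m \geq 1$, so $G$ recurs infinitely often in the sequence $\{J^j(G)\}$. Since $G$ is non-empty, this forces $d(G) = \infty$: if instead $d(G) = d_0 < \infty$, then $J^{j}(G) = \emptyset$ for all $j \geq d_0$, and choosing $m$ with $mk \geq d_0$ would give $\emptyset = J^{mk}(G) = G$, contradicting non-emptiness. Now suppose for contradiction that $G$ is neither $C_5$ nor $N$. Then $G$ is $d$-infinite and distinct from the two fixed graphs, so the Exploding Graph Theorem (Theorem~\ref{Theorem: Exploding graph}) applies and $\abs{E(J^j(G))} \to \infty$ as $j \to \infty$. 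But along the subsequence $j = mk$ we have $\abs{E(J^{mk}(G))} = \abs{E(G)}$, a constant; a sequence of natural numbers tending to infinity cannot have a bounded subsequence, so this is a contradiction. Hence $G$ must be $C_5$ or $N$, establishing (iii).

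The main obstacle is that (ii) $\Rightarrow$ (iii) must route through the edge-growth dichotomy rather than a direct fixed-point argument. Aigner's theorem pins down only the fixed points of a \emph{single} application of $J$, but a priori a graph could in principle return to itself after several steps without being a one-step fixed point; the statement (ii) allows exactly this possibility. The Exploding Graph Theorem is precisely the tool that rules out such nontrivial periodicity, since it shows that any $d$-infinite graph other than $C_5$ or $N$ has strictly unbounded edge counts and therefore cannot recur in its own sequence. Once this is in hand the remaining implications are routine, and the only real care needed is the use of the non-emptiness hypothesis, which is what prevents a terminating sequence from masquerading as a nontrivial period.
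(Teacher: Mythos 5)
Your proposal is correct and follows essentially the same route as the paper: non-emptiness plus periodicity forces $d(G)=\infty$, and unbounded edge growth then contradicts the bounded subsequence $\abs{E(J^{mk}(G))}=\abs{E(G)}$. The only cosmetic difference is that you invoke the Exploding Graph Theorem as a black box, whereas the paper re-runs its ingredients (the Accumulation Theorem plus Lemma~\ref{Lemma: c5-P_strict_subgraph}) inline; your version is, if anything, slightly more streamlined.
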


\begin{proof}
The  implication $(i) \Rightarrow (ii)$ is immeadiate and we have already discussed the implication $(iii) \Rightarrow (i)$. Hence, we only need to show that $(ii)$ implies $(iii)$. Note that if $(ii)$ is true, then we know $J^{nk}(G) = G$ for all positive integers $n$.

Suppose that $J^{k_0}(G) = G$ for some positive integer ${k_0}$. If there is some $d$ such that $J^d(G) = \emptyset$, then we can find a positive integer $n$ such that $d \leq nk_0$. This means that that $J^{nk_0}(G) = \emptyset$ but also $J^{nk_0}(G) = G$ by assumption, providing a contradiction. Hence, we know that $d(G) = \infty$. The Accumulation Theorem (\ref{Theorem: accumulating}) implies that $G$ will accumulate $C_5$ or $N$. Suppose that $G$ accumulates $C_5$ or $N$ as a strict subgraph. By Lemma~\ref{Lemma: c5-P_strict_subgraph}, we see that the number of edges must increase without bound as we continue taking jump graphs. This produces a contradiction because $J^{nk_0}(G) = G$ for any positive integer $n$ so the sequence $\{\abs{E(J^k(G))}\}$ must be repeating and therefore bounded. Hence, $G$ does not have $C_5$ or $N$ as a strict subgraph but still accumulates one of the two graphs. This implies that $G = C_5$ or $G = N$.
\end{proof}

\begin{cor}
If a graph $G$ is not $C_5$ or the net graph $N$, then there is no $k \geq 1$ such that \\ $J^k(G) = G$. 
\end{cor}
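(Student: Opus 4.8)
The plan is to read this corollary as the contrapositive of the implication $(ii) \Rightarrow (iii)$ in Theorem~\ref{Theorem: J^k(G)=G}. First I would note that, under the paper's standing assumption that all graphs are non-empty, the hypothesis that $G$ is neither $C_5$ nor $N$ is exactly the negation of statement $(iii)$, while the desired conclusion---that no $k \geq 1$ satisfies $J^k(G) = G$---is exactly the negation of statement $(ii)$. Since Theorem~\ref{Theorem: J^k(G)=G} asserts that $(ii)$ and $(iii)$ are equivalent, negating both sides yields the corollary at once; no additional argument is required.

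If one preferred a derivation that does not simply quote the full equivalence, I would rerun the core of the proof of Theorem~\ref{Theorem: J^k(G)=G}. The approach is to assume for contradiction that $J^{k_0}(G) = G$ for some $k_0 \geq 1$, so that $J^{nk_0}(G) = G$ for every positive integer $n$. First I would rule out dissipation: if $J^{d}(G) = \emptyset$ for some $d$, choosing $n$ with $nk_0 \geq d$ would force $G = J^{nk_0}(G) = \emptyset$, which is impossible; hence $d(G) = \infty$. By the Accumulation Theorem~(\ref{Theorem: accumulating}), $G$ then accumulates $C_5$ or $N$.

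The remaining step is to split on whether $G$ contains $C_5$ or $N$ as a \emph{strict} subgraph. If it does, Lemma~\ref{Lemma: c5-P_strict_subgraph} gives $\abs{E(J^k(G))} \to \infty$, contradicting the fact that the sequence $\{\abs{E(J^k(G))}\}$ is periodic---and hence bounded---because $J^{nk_0}(G) = G$. Thus $G$ accumulates one of the two graphs without ever containing it strictly, which forces $G = C_5$ or $G = N$, contradicting the hypothesis. The only point requiring care is the non-emptiness convention (both $C_5$ and $N$ are non-empty, so the statement is understood for non-empty $G$), but there is no genuine obstacle here: all of the substantive work has already been done in Theorem~\ref{Theorem: J^k(G)=G} and the results it invokes, so the corollary is immediate.
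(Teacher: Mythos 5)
Your proposal is correct and matches the paper exactly: the paper dispatches this corollary in one line as an immediate consequence of Theorem~\ref{Theorem: J^k(G)=G}, which is precisely your contrapositive reading of the implication $(ii) \Rightarrow (iii)$. Your optional re-derivation simply replays the paper's own proof of that theorem, so nothing new is needed.
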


The above is an immediate corollary of Theorem~\ref{Theorem: J^k(G)=G}.

\section*{Acknowledgements}

We would like to explicitly express gratitude to the WXML (Washington Experimental Mathematics Lab) at the University of Washington. This program brought us together with our outstanding mentors, Bennet Goeckner and Rowan Rowlands, with Rowan supplying the excellent images in Figure 5, Figure 6, and Figure 7. They provided extensive support in both pursuing our work and writing this paper, offering feedback, proofreading, and more. Their efforts were not only in providing us with necessary or interesting background information but also in pushing us to do more, especially with their shared excitement as we reported progress. Our work here certainly would not have happened without them.

\bibliography{bib}
\bibliographystyle{amsalpha}

\end{document}